\DeclareSymbolFont{T2Aletters}{T2A}{cmr}{m}{it} % Italic cyrillic letters
\newtheorem{Theorem}{Theorem}[section]
\newtheorem{Corollary}[Theorem]{Corollary}
\newtheorem{Proposition}[Theorem]{Proposition}
\newtheorem{Lemma}[Theorem]{Lemma}
\theoremstyle{definition}
\newtheorem{Definition}[Theorem]{Definition}
\newtheorem{Question}[Theorem]{Question}
\newtheorem{Example}[Theorem]{Example}
\theoremstyle{remark}
\newtheorem{Remark}[Theorem]{Remark}
\newcommand{\di}{\mathrm{d}} % for integration
\newcommand{\DM}{\mathop{\mathrm{DM}}\nolimits}
\newcommand{\supp}{\mathop{\mathrm{supp}}\nolimits}
\newcommand{\vol}{\mathop{\mathrm{vol}}}
\newcommand{\abs}[1]{\left|#1\right|}
\newcommand{\rank}{\mathop{\mathrm{rk}}}
\newcommand{\ann}{o} % Symbol fuer Annulator
\newcommand{\st}{s.\,t.\ } % such that
\newcommand{\ie}{\textit{i.\,e.\ }} % id est
\newcommand{\eg}{\textit{e.\,g.\ }} % exemplum gratum
\newcommand{\N}{\mathbb{N}}
\newcommand{\Z}{\mathbb{Z}}
\newcommand{\R}{\mathbb{R}}
\newcommand{\K}{\mathbb{K}}
\newcommand{\Bcal}{\mathcal{B}}
\newcommand{\Dcal}{\mathcal{D}}
\newcommand{\Hcal}{\mathcal{H}}
\newcommand{\Ical}{\mathcal{I}}
\newcommand{\Pcal}{\mathcal{P}}
\newcommand{\Jcal}{\mathcal{J}}
\newcommand{\Bcyr}{\text{\textit{\CYRB}}} % cyrillic B
\newcommand{\ideal}{\mathop{\mathrm{ideal}}}
\newcommand{\spa}{\mathop{\mathrm{span}}}
\newcommand{\cone}{\mathop{\mathrm{cone}}}
\newcommand{\BB}{ \mathbb B}
\newcommand{\Lcal}{\mathcal L}
\newcommand{\diff}[1]{\frac{\partial}{\partial #1}}
\newcommand{\ex}{\mathop{\mathrm{ex}}}
\newcommand{\hilb}{\mathop{\mathrm{Hilb}}}
\newcommand{\clos}{\mathop{\mathrm{cl}}\nolimits} %closure
\newcommand{\sym}{\mathop{\mathrm{Sym}}\nolimits} % Symmetric algebra
\newcommand{\pair}[2]{\langle #1,#2 \rangle}
\numberwithin{equation}{section}
\newcommand{\Xintro}{%
Let $X\subseteq U\cong \R^r$ be a finite list of vectors that spans $U$.
}
\newcommand{\XintroCard}{%
Let $X\subseteq U\cong \R^r$ be a  list of $N$ vectors that spans $U$.
}
\newcommand{\XBBintro}{% 
 Let $X\subseteq U\cong \R^r$ be a finite list of vectors that spans $U$ and let 
 $\BB'$ be an arbitrary subset of its  set of bases $\BB(X)$.
}
\newcommand{\XBBintroFEP}{%
Let $X\subseteq U\cong \R^r$ be a finite list of vectors that spans $U$ 
 and let $\BB' \subseteq \BB(X)$ be a set of bases with the forward exchange
 property.
}
\newcommand{\XBBintroFEPcard}{%
Let $X\subseteq U\cong \R^r$ be a list of $N$ vectors that spans $U$ 
 and let $\BB' \subseteq \BB(X)$ be a set of bases with the forward exchange
 property.
}
\date{\today}
\begin{document}
\title{Zonotopal algebra and forward exchange matroids}
\author{Matthias Lenz\fnref{fn1,fn2}}
\ead{matthias.lenz@unifr.ch}
% 
% \email{lenz@math.tu-berlin.de}
% 
\address{%
Technische Universit\"at Berlin \\
Sekretariat MA 4-2 \\
Stra\ss e des 17.~Juni 136 \\
10623 Berlin \\
GERMANY
}
\fntext[fn1]{Present address: D\'epartement de math\'ematiques,
Universit\'e de Fribourg,
Chemin du Mus\'ee 23,
1700 Fribourg, Switzerland}
\fntext[fn2]{The author was supported by
a Sofia Kovalevskaya Research Prize of Alexander von Humboldt Foundation awarded to Olga Holtz
and by a PhD scholarship from the Berlin Mathematical
School (BMS)}

\begin{keyword}
 zonotopal algebra \sep  matroid \sep  Tutte polynomial \sep  hyperplane arrangement \sep  
 multivariate spline \sep  
 graded vector space \sep  kernel of differential operators \sep  
 Hilbert series
 
 \MSC[2010] Primary
 05A15 \sep %Exact enumeration problems, generating functions
05B35 \sep % Matroids, geometric lattices
13B25 \sep % Polynomials over commutative rings
16S32 \sep %Rings of differential operators and their modules
41A15. %Spline approximation
% \\
Secondary:
05C31 \sep % Graph polynomials
41A63 \sep %Multidimensional problems
47F05.  %Partial diﬀerential operators 
\end{keyword}

\begin{abstract}
% 
% \@journal
% 
Zonotopal algebra is the study of a family of pairs of dual vector spaces of 
multivariate polynomials that can be associated with a list of vectors $X$. 
It connects objects from combinatorics, geometry, and approximation theory.
The origin of zonotopal algebra is the pair $(\Dcal(X),\Pcal(X))$, where   
 $\Dcal(X)$ denotes the Dahmen--Micchelli space that is spanned by the local pieces of the box spline and 
 $\Pcal(X)$ is a space spanned by products of linear forms.

The first main result of this paper is the construction of a canonical basis for $\Dcal(X)$. We show that it is dual 
 to the canonical basis for $\Pcal(X)$ that is already known.

The second main result of this paper is the construction of a new family of zonotopal spaces that is far more general
 than the ones that were recently studied by Ardila--Postnikov, Holtz--Ron, Holtz--Ron--Xu, Li--Ron, and others.
 We call the underlying combinatorial structure of those spaces forward exchange matroid. 
 A forward exchange matroid is an ordered matroid
together with a subset of its set of bases that satisfies a weak version of the
 basis exchange axiom.
\end{abstract}
\maketitle
 
\thispagestyle{fancy}

\section{Introduction}
A finite list of vectors $X$ gives rise to a large number of objects in
   combinatorics, algebraic and discrete geometry, commutative algebra, and approximation theory.
Examples include matroids, hyperplane arrangements and zonotopes, fat point ideals, and box splines.
 In the 1980s, various authors in the approximation theory  community started studying
algebraic structures that capture information about 
 splines 
(\eg
\cite{akopyan-saakyan-1988,boor-hoellig-1982, dyn-ron-1990}).
One important example is the Dahmen--Micchelli space $\Dcal(X)$, that is 
 spanned by the local pieces of the box spline and their partial derivatives.
See \cite[Section 1.2]{holtz-ron-2011} for a historic survey and the book \cite{BoxSplineBook} 
for a treatment of polynomial spaces appearing in the theory of box splines.
 Related results were obtained independently by authors interested in hyperplane arrangements
(\eg \cite{orlik-terao-1994}).

The space $\Pcal(X)$ that is dual to $\Dcal(X)$ was introduced in \cite{akopyan-saakyan-1988,dyn-ron-1990}.
It is spanned by products of linear forms and it can be written as the
Macaulay inverse system (or kernel) of an ideal generated
 by powers of linear forms \cite{boor-dyn-ron-1991}.
Ideals of this type and their inverse systems 
are also studied in the literature on fat point ideals
 \cite{emsalem-iarrobino-1995,geramita-schenck-1998} and graph orientations \cite{backman-hopkins-2015}.

In addition to the aforementioned pair of spaces $(\Dcal(X),\Pcal(X))$,
 Olga Holtz and Amos Ron introduced two more pairs of spaces with interesting combinatorial properties
\cite{holtz-ron-2011}.
 They named the theory of those spaces  \emph{Zonotopal Algebra}.
 This name reflects the fact that there are various connections between zonotopal spaces
 and the lattice points in the zonotope defined by $X$
 if the list $X$ is %totally 
 unimodular.

Subsequently, those results were further generalised by
Olga Holtz, Amos Ron, and 
 Zhiqiang Xu \cite{holtz-ron-xu-2012} as well as
  Nan Li and Amos Ron \cite{li-ron-2013}.
 Federico Ardila and 
 Alex Postnikov  studied generalised $\Pcal$-spaces and connections with power ideals \cite{ardila-postnikov-2009}.
 Bernd Sturmfels and Zhiqiang Xu established a connection with Cox rings \cite{sturmfels-xu-2010}.
 Further work on spaces of $\Pcal$-type includes \cite{berget-2010,brion-vergne-1999,lenz-hzpi-2012,lenz-arithmetic-2016,wagner-1999}.

 Zonotopal algebra is closely related to matroid theory: the Hilbert series of zonotopal spaces
  only depend on the matroid structure of the list $X$.

It is  known that there is a canonical way to construct bases for the spaces of
 $\Pcal$-type  \cite{ardila-postnikov-2009,dyn-ron-1990, holtz-ron-2011,lenz-todd-2015,li-ron-2013}. 
 The first of the two main results in this paper is  an algorithm
 that constructs a basis for spaces of $\Dcal$-type.
 Two different 
 algorithms  are already known  \cite{dahmen-1990,deBoor-Ron-computational-1992}. 
 However, our algorithm has several advantages over the other two: 
 it is canonical 
 and it yields a basis that is dual to
 the known basis  for the $\Pcal$-space.
 Here, canonical means that the basis that we obtain only depends on the order of the elements in the list $X$ 
 and not on any further choices.
 In \cite{lenz-todd-2015}, the author used the duality between the bases of the  $\Pcal$-space  and the $\Dcal$-space  to prove a slight generalisation 
 of the Khovanskii--Pukhlikov formula \cite{pukhlikov-khovanski-1992} that  relates the volume  and the number of lattice points in a smooth 
 lattice polytope.

 Our second main result is that far more general pairs of zonotopal spaces with nice properties can be constructed than 
 the ones that were previously known.
 We define a new combinatorial structure called forward exchange matroid. 
A forward exchange matroid is an ordered matroid
together with a subset of its set of bases that satisfies a weak version of the
 basis exchange axiom. % 
 This is the underlying structure of the generalised zonotopal $\Dcal$-spaces and $\Pcal$-spaces that we introduce. 
 Potentially related structures were very recently studied by Jos\'e Samper in a combinatorial context \cite{samper-fpsac-toappear-2016}.

\smallskip
All objects mentioned so far are part of what we call the continuous theory.
If the list $X$ is contained in a lattice (\eg  $\Z^d$), an even wider spectrum of mathematical objects appears. 
We call this the
 discrete theory. Every object in the continuous theory has a discrete analogue: 
vector partition functions correspond to box splines and
 toric arrangements 
 correspond to hyperplane arrangements.
The local pieces of the vector partition function are quasi-polynomials that span the % 
discrete Dahmen--Micchelli
 space $\DM(X)$.
Both theories are nicely explained in the recent book
by
Corrado De~Concini and Claudio Procesi \cite{concini-procesi-book}.
The combinatorics of the discrete case is 
 captured by arithmetic matroids 
which were  recently introduced by Luca Moci and Michele D'Adderio \cite{moci-adderio-2013,moci-tutte-2012}.

Vector partition functions arise for example in representation theory as 
Kostant partition function, when the list $X$ is chosen to be the set of positive roots of a simple Lie algebra
(\eg \cite{cochet-2005}).
% There are also  applications to the equivariant
% index theory of elliptic operators
% \cite{deconcini-procesi-vergne-2010a, deconcini-procesi-vergne-2010b, deconcini-procesi-vergne-2010c}. 
% De~Concini--Procesi--Vergne \cite{deconcini-procesi-vergne-2010b, deconcini-procesi-vergne-2011, deconcini-procesi-vergne-infinitesimal-2013}
% and Cavazzani--Moci \cite{cavazzani-moci-2016} shows that some of these spaces can be 
% ``geometrically realised'' as   equivariant cohomology or  $K$-theory of certain differentiable manifolds.
In a series of articles, 
Corrado De~Concini, Claudio Procesi, and Mich\`ele Vergne, as well as 
Francesco Cavazzani and Luca Moci
have studied  applications to equivariant
index theory of elliptic operators
and showed that some of these spaces can be
``geometrically realised'' as   equivariant cohomology or  $K$-theory of certain differentiable manifolds
\cite{cavazzani-moci-2016,deconcini-procesi-vergne-2010a,deconcini-procesi-vergne-2010b,deconcini-procesi-vergne-2011,deconcini-procesi-vergne-2010c,deconcini-procesi-vergne-infinitesimal-2013}.

This paper deals only with the continuous theory. However, the two theories overlap if the
 list of vectors $X$ is %totally 
 unimodular. We hope that % 
 the results in this paper
 can be transferred to the discrete case in the future.

\subsection{Notation}
\label{Subsection:Notation}
Our basic object of study is a list of vectors $X=(x_1,\ldots, x_N)$ that span an $r$-dimensional
 real vector space $U \cong \R^r$. 
 The dual space $U^*$ is denoted by $V$.
We slightly abuse notation by using the symbol $\subseteq$ for sublists. 
For $Y\subseteq X$, $X\setminus Y$ denotes the deletion of a sublist, \ie $(x_1,x_2)\setminus (x_1)=(x_2)$
 even if $x_1=x_2$.  
The list $X$ comes with a natural ordering: we say that 
 $x_i < x_j$ if and only if $i<j$.

Note that $X$ can be identified with a linear  map $\R^N \to U$ and after the choice of a basis with  
 an $(r \times N)$-matrix with real entries.

We will consider families of pairs of dual spaces $(\Dcal(X,\cdot),\Pcal(X,\cdot))$.
 The space $\Dcal(X,\cdot)$ is contained in $\sym(V)$, the symmetric algebra over $V$ and $\Pcal(X,\cdot)$ is contained
 in $\sym(U)$. 
 The \emph{symmetric algebra} is a base-free version of the ring of polynomials over a vector space. 
We fix a basis $(s_1,\ldots, s_r)$  for $U$ and $(t_1,\ldots, t_r)$ denotes the dual basis for $V$,
 \ie  $t_i(s_j)=\delta_{ij}$, where $\delta_{ij}$ denotes the Kronecker delta.
 The choice of the 
 basis  determines isomorphisms
 $\sym(U)\cong \R[s_1,\ldots, s_r]$ and $\sym(V)\cong   \R[t_1,\ldots, t_r]$.
For $x\in U$,  $x^\circ\subseteq V$ denotes the \emph{annihilator} of $x$, \ie 
 $x^\ann := \{ f\in V : f(x) = 0  \}$.
For more background on algebra, see \cite{concini-procesi-book} or \cite{eisenbud-1995}.

As usual, $\chi_S : S\to \{0,1\}$ denotes the indicator function of a set $S$.

\subsection{Matroids}
An \emph{ordered matroid} on $N$ elements is a pair
$(A,\BB)$ where 
$A$ is an (ordered) list with $N$ elements and $\BB$ is a non-empty set
 of sublists of $A$ that satisfies
the following axiom:
\begin{equation}
\label{equation:ExchangeProperty}
\begin{split}
 &\text{Let } B, B'\in \BB \text{ and } b\in B\setminus B'. \\
 &\text{Then, there exists $b'\in B'\setminus B$ \st } 
  (B\setminus b)\cup b' \in \BB.
\end{split}
  \end{equation}
$\BB$ is called the \emph{set of bases} of the matroid $(A,\BB)$.
One can easily show that all elements of $\BB$ have the same cardinality.
This number $r$ is called the \emph{rank} of the matroid $(A,\BB)$. % 
A set $I\subseteq A$ is called \emph{independent} if it is a subset of a basis.
The \emph{rank} of $Y\subseteq A$ is defined as the cardinality of a maximal  independent set 
contained in $Y$. It is denoted $\rank(Y)$. 
The \emph{closure} of $Y$  is defined as $\clos(Y):=\{ x\in A : \rank(Y \cup x)=\rank(Y) \}$.
A set $C\subseteq X$ is called a \emph{flat} if $C=\clos(C)$.

A set $C\subseteq X$ is called a
\emph{cocircuit} if $C\cap B\neq \emptyset$ for all bases $B\in \BB$ and $C$ is minimal with this property.
Cocircuits of cardinality one are called \emph{coloops}. An element that is not contained in 
any basis is called a \emph{loop}. 

Now fix a basis $B\in \BB$. 
An element $b\in B$ is called \emph{internally active} in $B$ if $b=\max (A\setminus \clos(B\setminus b))$,
\ie $b$ is the maximal element of the unique cocircuit % 
contained in $(A\setminus B)\cup b$.
The set of internally active  elements in $B$ is denoted  $I(B)$. 
An element $x\in A\setminus B$ is called \emph{externally active} if 
$x\in \clos \{ b \in B : b \le x \}$,
\ie $x$ is the maximal element of the unique circuit contained in $B\cup x$.
The set of externally active elements with respect to $B$ is denoted $E(B)$.\footnote{% 
Usually, combinatorialists use $\min$ instead of $\max$ in the definition of the activities. 
 In the zonotopal algebra literature $\max$ is used. This has some notational advantages.
}
The \emph{Tutte polynomial}
\begin{align}
T_{(A,\BB)}(x,y) := \sum_{B\subseteq \BB} x^{\abs{I(B)}}y^{\abs{E(B)}} % 
\end{align}
captures a lot of information about the matroid $(A,\BB)$.
One can show that it is independent of the order of the list $A$.

In this paper, we mainly  consider matroids that are realisable over some field $\K$.
Let $X=(x_1,\ldots, x_N)$ be a list of vectors spanning some $\K$-vector space $W$  and let $\BB(X)$ denote the set 
 of bases for $W$ that can be selected from $X$.
 One can easily see that $(X,\BB(X))$ is a matroid. The list $X$ is called a \emph{realisation} of this matroid 
 and a matroid $(A,\BB)$ is called \emph{realisable} 
 if there is a list of vectors $X$ and
 a bijection between $A$ and $X$ that induces a bijection between $\BB$ and $\BB(X)$.

A standard reference for matroid theory is Oxley's book \cite{MatroidTheory-Oxley}.
Survey papers on the Tutte polynomial are \cite{brylawski-oxley-1992,ellis-merino-2011}.

\smallskip
Throughout this paper, we use a running example, which we now introduce.
\begin{Example}
\label{Example:MatrixBases}
\begin{align}
\text{Let } X &:= \begin{pmatrix}
        1 & 0 & 1 \\ 0 & 1 & 1
      \end{pmatrix} = (x_1,x_2,x_3). % 
\end{align}
The set of bases that can be selected from $X$ is
 $\BB(X) = \{ (x_1,x_2), (x_1,x_3), (x_2,x_3) \}$.
 The Tutte polynomial of the matroid $(X,\BB(X))$ is
 \begin{align}
  T_{(X,\BB(X))} (x,y) = x^2 + x + y.
 \end{align}
\end{Example}

\subsection{Some commutative algebra}
In this subsection, we define some commutative algebra terminology that is used in this paper. 

\begin{Definition}[A pairing between symmetric algebras]
We define the following pairing:
\begin{align}
 \pair{\cdot}{\cdot} : \R[s_1,\ldots, s_r] \times \R[t_1,\ldots, t_r] &\to \R   \\
    \pair{p}{f} := \left( p\left(\diff{t_1},\ldots, \frac{\partial}{\partial t_r} \right)  f \right) (0), 
\label{eq:pairingDefinition}
\end{align}
\ie we let $p$ act on $f$ as a differential operator and take the constant part of the result.
\end{Definition}
\begin{Remark}
One can easily show that the definition of the pairing $\pair{\cdot}{\cdot}$ % 
 is independent of the choice of the bases for the symmetric algebras $\sym(U)$ and $\sym(V)$
 as long as the bases are dual to each other. 
\end{Remark}
\begin{Definition}
 Let $\Ical \subseteq \R[s_1,\ldots, s_r]$ be a homogeneous ideal. Its  \emph{kernel} or Macaulay inverse system 
 \cite{groebner-1938,abramson-2010,macaulay-1916}
  is defined as
 \begin{align}
   \ker \Ical &:= \{ f \in \R[t_1,\ldots, t_r] : \pair{q}{f} = 0 \text{ for all } q \in \Ical \}. 
 \end{align}
\end{Definition}

\begin{Remark}
 $\ker \Ical$ can also be written as
 \begin{align}
   \ker \Ical &:= \{ f\in \R[t_1,\ldots, t_r] : 
           p\left(\diff{t_1},\ldots, \frac{\partial}{\partial t_r} \right)  f  = 0  \} 
 \end{align}
 where $p$ runs over a set of generators for the ideal $\Ical$.
\end{Remark}

\begin{Remark}
For a homogeneous ideal $\Ical \subseteq \R[s_1,\ldots, s_r]$ of finite codimension 
the Hilbert series of $\ker \Ical$ and $\R[s_1,\ldots, s_r]/\Ical$ are equal.
For instance, this follows from  \cite[Theorem 5.4]{concini-procesi-book}.
\end{Remark}

A \emph{graded vector space} is a vector space $V$  that decomposes into a direct sum $V=\bigoplus_{i\ge 0} V_i$.
 A graded linear map $f: V\to W$ preserves the grade, \ie $f(V_i)$ is contained in $W_i$. 
For a graded vector space, we define its \emph{Hilbert series} as the formal power 
series $\hilb(V,t):=\sum_{i\ge 0} \dim(V_i) t^i$. 

Note that a linear map $f: V\to W$ induces an algebra homomorphism $ \sym(f) : \sym(V)\to \sym(W) $.

\subsection{Central zonotopal algebra}
\label{Subsection:CentralZA}

In this subsection, we define the Dahmen--Micchelli space
 $\Dcal(X)$ and its dual $\Pcal(X)$. 
The pair $(\Dcal(X), \Pcal(X))$, which is  called the central pair of zonotopal spaces in \cite{holtz-ron-2011},
 is the origin of zonotopal algebra.

A vector $u \in U$ naturally defines a polynomial $p_u \in \R[s_1,\ldots, s_r]$ as follows:
if $u$ can be expressed in the basis $(s_1,\ldots, s_r)$ as $u=\sum_{i=1}^r \lambda_i s_i$, then we define
$p_u:=\sum_{i=1}^r \lambda_i s_i  \in \R[s_1,\ldots, s_r]$.
For $Y\subseteq X$, we define $p_Y := \prod_{x\in Y} p_x$.
For the list $X$ in Example~\ref{Example:MatrixBases} we obtain $p_X=s_1s_2(s_1+s_2)$.
\begin{Definition}
\label{Definition:DahmenMicchelli}
\Xintro
 Then we define  
\begin{align}
  \Jcal(X) &:=  \ideal \{ p_T : T \subseteq X \text{ cocircuit}   \} \subseteq \R[s_1,\ldots, s_r] \\
  \text{and } 
  \Dcal(X) &:= \ker \Jcal(X) \subseteq \R[t_1,\ldots, t_r].
\end{align}
$\Dcal(X)$ is called the central $\Dcal$-space or \emph{Dahmen--Micchelli} space.
\end{Definition}

It can be shown that $\Dcal(X)$ is the space spanned by the local pieces of the box spline and their 
 partial derivatives. The
 box spline will be defined in the next section.
The space $\Dcal(X)$ was introduced in \cite{boor-hoellig-1982} and in 
\cite{dahmen-micchelli-1985} it was shown that its dimension is $\abs{\BB(X)}$.

\begin{Definition}
\Xintro
Then, we define  the central $\Pcal$-space
 \begin{align}
   \Pcal(X) &:= \spa \{ p_Y : Y\subseteq X, X\setminus Y \text{ has full rank} \} \subseteq \R[s_1,\ldots, s_r].  
  \end{align}
\end{Definition}

\begin{Proposition}[\cite{dyn-ron-1990}% 
 ]
\label{Proposition:Pbasis}
\Xintro
  A basis for $\Pcal(X)$ is given by  
 \begin{align}
  \Bcal(X) &:= \{ Q_B   : B \in \BB(X) \},
 \end{align}
 where $Q_B:=p_{ X\setminus (B \cup E(B))}$.
\end{Proposition}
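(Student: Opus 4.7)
The plan is to combine an immediate containment check with a dimension count, reducing the problem to linear independence of the $Q_B$, which I would establish by induction on $N := |X|$ via the deletion and contraction of the maximal element $x_N$.

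For the containment, since $B \cup E(B) \supseteq B$ contains the basis $B$, it spans $U$. Writing $Y := X \setminus (B \cup E(B))$, this says $X \setminus Y$ has full rank, so $Q_B = p_Y \in \Pcal(X)$ by the definition of the $\Pcal$-space. For cardinality, $|\Bcal(X)| = |\BB(X)| = \dim \Dcal(X)$ by the Dahmen-Micchelli formula mentioned in the preceding paragraph, and this equals $\dim \Pcal(X)$ by the duality induced by the pairing $\pair{\cdot}{\cdot}$. So it suffices to prove linear independence.

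The base case $N = r$ is immediate: $\BB(X) = \{X\}$, $E(X) = \emptyset$, and $\Bcal(X) = \{1\}$. For the inductive step, set $x := x_N$. If $x$ is a coloop, every basis contains $x$, the bijection $B \leftrightarrow B \setminus x$ identifies $\BB(X)$ with $\BB(X/x)$ while preserving external activities, and induction on $X/x$ transfers directly after lifting polynomials from $\sym(U/\spa(x))$ to $\sym(U)$. If $x$ is not a coloop, partition $\BB(X) = \BB_0 \sqcup \BB_1$ according to whether $x \in B$. For $B \in \BB_0$, maximality of $x$ in the order forces $\{b \in B : b \le x\} = B$ and $x \in \clos(B) = X$, so $x \in E_X(B)$; consequently $B \cup E_X(B) = B \cup E_{X\setminus x}(B) \cup \{x\}$ and $Q_B^X = Q_B^{X \setminus x}$, so this half is independent by induction applied to $X\setminus x$. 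For $B \in \BB_1$, one reduces analogously to the contraction $X/x$ by tracking how activities transfer through $b\mapsto \bar b$.

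The main obstacle is showing that the two halves $\{Q_B\}_{B \in \BB_0}$ and $\{Q_B\}_{B \in \BB_1}$ are \emph{jointly} linearly independent. The natural route is the short exact sequence
\[
0 \longrightarrow \Pcal(X \setminus x) \longhookrightarrow \Pcal(X) \longtwoheadrightarrow \Pcal(X)/\Pcal(X \setminus x) \longrightarrow 0,
\]
whose quotient has dimension $|\BB(X)| - |\BB(X \setminus x)| = |\BB(X/x)|$ by the standard matroid deletion-contraction identity. One then identifies $\{Q_B\}_{B \in \BB_0}$ (by the equality $Q_B^X = Q_B^{X\setminus x}$ above) as a basis of the subspace $\Pcal(X\setminus x)$ and $\{Q_B\}_{B \in \BB_1}$ as lifts of a basis of the quotient; block-triangularity then yields linear independence of the union, closing the induction.
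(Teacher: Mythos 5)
The paper does not prove this proposition; it is cited to Dyn--Ron. Evaluating the proposal on its own terms: the containment and dimension count are fine, and the reduction to linear independence via deletion-contraction is the right shape, but there is a genuine gap at exactly the spot you flag as the ``main obstacle,'' and the choice $x := x_N$ makes the induction fail to close.

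The problem is that the quotient $\Pcal(X)/\Pcal(X\setminus x)$ is \emph{not} naturally identified with $\Pcal(X/x)$, so the phrase ``one reduces analogously to the contraction $X/x$'' for the $\BB_1$ half is not available. The relevant map $\sym(\pi_x)\colon\Pcal(X)\to\Pcal(X/x)$ has kernel $p_x\cdot\Pcal(X\setminus x)$, not $\Pcal(X\setminus x)$ (cf.\ Proposition~\ref{Theorem:ExactSequencesCentralP}), and these two subspaces are different in general. Worse, with $x=x_N$ and $B\in\BB_1$, any $y\in X$ parallel to $x$ with $y<x$ lies in $X\setminus(B\cup E(B))$ (such a $y$ is not externally active, because the circuit in $B\cup y$ is $\{x,y\}$ with $x$ the maximum), so $p_y$ divides $Q_B^X$ and $\sym(\pi_x)(Q_B^X)=0$. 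Thus there is no reason the images of $\{Q_B\}_{B\in\BB_1}$ in $\Pcal(X)/\Pcal(X\setminus x)$ should be linearly independent, and the block-triangularity is asserted rather than proved. Also, in the coloop case the relation between $Q_B^X$ and $Q_{\overline{B\setminus x}}^{X/x}$ is through the projection $\sym(\pi_x)$, not through a lift $\sym(U/\spa x)\hookrightarrow\sym U$: the factors $p_y$ of $Q_B^X$ generally have nonzero $x$-component, so $Q_B^X$ is not in the image of any such lift.

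The fix is to take the \emph{minimal} non-loop element $x:=x_1$ rather than the maximal one; this realigns the argument with the $p_x$-multiplication map in the exact sequence. Then for $B\in\BB_0$ one has $x\notin E(B)$ (since $\{b\in B:b\le x\}=\emptyset$), so $Q_B^X=p_x\cdot Q_B^{X\setminus x}$; while for $B\in\BB_1$ any $y$ parallel to $x$ satisfies $y>x$ and $x\in\{b\in B:b\le y\}$, hence $y\in E(B)$, so no factor of $Q_B^X$ is parallel to $x$ and $\sym(\pi_x)(Q_B^X)=Q_{\bar B}^{X/x}\ne 0$. Given $\sum_B c_B Q_B^X=0$, applying $\sym(\pi_x)$ kills the $\BB_0$ terms and yields $\sum_{B\in\BB_1}c_B Q_{\bar B}^{X/x}=0$, so $c_B=0$ for $B\in\BB_1$ by induction on $X/x$; then dividing the remaining relation by $p_x$ and applying induction on $X\setminus x$ gives $c_B=0$ for $B\in\BB_0$. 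With this modification the proposal becomes a correct proof.
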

The space $\Pcal(X)$ can also be written as the kernel of an ideal.
The following proposition appeared in \cite{ardila-postnikov-2009}
 and earlier in a slightly different version in \cite{boor-dyn-ron-1991}.
\begin{Proposition}
\Xintro
Then,
\begin{align}
 \Pcal(X)&= \ker \Ical(X), \\
 \text{where }  \Ical(X) &:= \ideal\left\{ p_\eta^{m(\eta)} : \eta\in V\setminus  \{ 0 \} \right\}  \subseteq \R[t_1,\ldots, t_r]
 \end{align}
 and $m : V \to \N$ assigns to $\eta\in V$ the number of vectors in $X$ that are \emph{not} perpendicular to $\eta$.
\end{Proposition}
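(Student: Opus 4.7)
The plan is to prove $\Pcal(X)=\ker\Ical(X)$ by verifying one inclusion through a direct differential-operator computation and then matching dimensions using Proposition~\ref{Proposition:Pbasis}. Note that $\Pcal(X)\subseteq\R[s_1,\ldots,s_r]$ while $\Ical(X)\subseteq\R[t_1,\ldots,t_r]$, so ``kernel'' here should be read in the natural reverse direction permitted by the symmetry of the pairing: an ideal $\Ical\subseteq \R[t_1,\ldots,t_r]$ has kernel $\{p\in\R[s_1,\ldots,s_r] : \pair{p}{q}=0\text{ for all }q\in\Ical\}$.

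For the inclusion $\Pcal(X)\subseteq\ker\Ical(X)$, it suffices to check that $\pair{p_Y}{h\cdot p_\eta^{m(\eta)}}=0$ whenever $X\setminus Y$ spans $U$, $\eta\in V\setminus\{0\}$, and $h\in\R[t_1,\ldots,t_r]$. Unwinding the pairing, this is the constant term of the polynomial obtained by letting the differential operator $p_Y$ act on $h\cdot p_\eta^{m(\eta)}$. Each factor $p_y$ of $p_Y$ is a first-order operator sending $p_\eta^{k}$ to $k\,\eta(y)\,p_\eta^{k-1}$, so by Leibniz the action of $p_Y$ expands as a sum indexed by subsets $S\subseteq Y$, with $p_S$ consumed on the pure power and $p_{Y\setminus S}$ applied to $h$. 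To produce a nonzero constant term, every copy of $p_\eta$ must be consumed, forcing $|S|=m(\eta)$ with $\eta(y)\neq 0$ for every $y\in S$. Since $m(\eta)=|\{x\in X:\eta(x)\neq 0\}|$, such an $S$ can exist only if $\{x\in X:\eta(x)\neq 0\}\subseteq Y$, i.e.\ $X\setminus Y\subseteq X\cap\eta^{\ann}$. As $\eta\neq 0$, this contradicts $X\setminus Y$ spanning $U$, so the pairing vanishes.

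For equality I compare dimensions. By Proposition~\ref{Proposition:Pbasis}, $\dim\Pcal(X)=|\BB(X)|$. Because $\Ical(X)$ contains $p_\eta^{m(\eta)}$ for every nonzero $\eta$, in particular for each element of a basis of $V$, the quotient $\R[t_1,\ldots,t_r]/\Ical(X)$ has finite codimension; the Hilbert series remark then gives $\dim\ker\Ical(X)=\dim \R[t_1,\ldots,t_r]/\Ical(X)$. Combined with the inclusion, it remains to show
\[
\dim \R[t_1,\ldots,t_r]/\Ical(X)=|\BB(X)|.
\]
This is the classical power-ideal count (Ardila--Postnikov \cite{ardila-postnikov-2009}, with predecessors in \cite{boor-dyn-ron-1991}); one proves it by a deletion--contraction short exact sequence for the quotient associated to a non-loop non-coloop $x\in X$, together with the matroid recursion $|\BB(X)|=|\BB(X\setminus x)|+|\BB(X/x)|$ and an induction on $|X|$.

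The main obstacle is this final dimension equality. The Leibniz-rule computation is mechanical and the Hilbert series identification is cited. The real work lies in constructing the correct short exact sequence for $\R[t_1,\ldots,t_r]/\Ical(X)$ under deletion--contraction at a non-loop non-coloop $x$, and verifying that its outer terms match $\R[t_1,\ldots,t_r]/\Ical(X\setminus x)$ and $\R[t_1,\ldots,t_{r-1}]/\Ical(X/x)$. This requires keeping track of which generators $p_\eta^{m(\eta)}$ change multiplicity under the two operations: $m(\eta)$ drops by one precisely when $\eta(x)\neq 0$, and $\Ical(X/x)$ corresponds to the $\eta$ that annihilate $x$.
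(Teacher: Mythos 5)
The paper itself provides no proof of this proposition; it simply cites Ardila--Postnikov and De Boor--Dyn--Ron. Your proposal is correct and fills in the structure that those citations supply: the reading of $\ker$ in the reverse direction is the right one (the pairing is symmetric on monomials, $\pair{s^\alpha}{t^\beta}=\alpha!\,\delta_{\alpha\beta}$, so an ideal in $\R[t_1,\ldots,t_r]$ has a well-defined inverse system in $\R[s_1,\ldots,s_r]$), and your Leibniz-rule computation for the inclusion $\Pcal(X)\subseteq\ker\Ical(X)$ is sound — the only possibly nonvanishing terms force all of $X\setminus\eta^{\ann}$ into $Y$, contradicting that $X\setminus Y$ spans. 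The remaining equality $\dim\R[t_1,\ldots,t_r]/\Ical(X)=\abs{\BB(X)}$ is precisely the content of the cited references, and your deletion--contraction sketch is the standard route there. In short, you have reduced the proposition to the same external input the paper invokes, while making the elementary half explicit; there is no gap, only a deferral that mirrors the paper's own.
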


\begin{Example}
 Let $X$ be the list of vectors we defined in Example~\ref{Example:MatrixBases}. % 
 Then
\begin{align}
  \Dcal(X) &= \ker ( \ideal \{ s_1s_2, s_1(s_1+s_2), s_2(s_1+s_2)\} ) = \spa\{ 1, t_1, t_2 \}, \\
  \Ical(X) &= \ideal\{ t_1^2, t_2^2, (t_1 - t_2)^2 \} + \R[t_1,t_2]_{\ge 3} = \ideal\{ t_1^2, t_2^2, t_1t_2 \}, \\
  \text{and }\Pcal(X) &= \ker\Ical(X) = \spa \{ 1, s_1, s_2 \}.
\end{align}

\end{Example}

\begin{Proposition}[\cite{dyn-ron-1990,jia-1990}] % 
\label{Proposition:PDduality}
\Xintro
Then the spaces $\Pcal(X)$ and $\Dcal(X)$ are dual under the pairing $\pair{\cdot}{\cdot}$, \ie
\begin{align}
 \Dcal(X) &\to \Pcal(X)^* \\
  f &\mapsto \pair{\cdot}{f}
\end{align}
is an isomorphism.
\end{Proposition}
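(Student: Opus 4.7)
The plan is to reduce the statement to a general duality fact for Macaulay inverse systems and then to identify $\Pcal(X)$ with the algebra quotient $\R[s_1,\ldots,s_r]/\Jcal(X)$ via the natural projection.

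First, I would invoke the standard Macaulay duality: for any homogeneous ideal $\Ical \subseteq \R[s_1,\ldots,s_r]$ of finite codimension, the pairing $\pair{\cdot}{\cdot}$ puts $\Ical$ and $\ker \Ical$ in perfect orthogonality, so
\begin{align*}
\Phi_{\Ical} : \ker \Ical \longrightarrow \left( \R[s_1,\ldots,s_r]/\Ical \right)^*, \qquad f \mapsto \pair{\cdot}{f}
\end{align*}
is injective; combined with the equality of Hilbert series of $\ker \Ical$ and $\R[s_1,\ldots,s_r]/\Ical$ recalled in the excerpt, $\Phi_{\Ical}$ is an isomorphism. Specialising to $\Ical = \Jcal(X)$ yields $\Dcal(X) \cong (\R[s_1,\ldots,s_r]/\Jcal(X))^*$ under the pairing.

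Second, I would show that the composite
\begin{align*}
\pi : \Pcal(X) \hookrightarrow \R[s_1,\ldots,s_r] \twoheadrightarrow \R[s_1,\ldots,s_r]/\Jcal(X)
\end{align*}
is an isomorphism. Both sides have dimension $\abs{\BB(X)}$: the target by the Dahmen--Micchelli formula cited after Definition~\ref{Definition:DahmenMicchelli}, the source by Proposition~\ref{Proposition:Pbasis}. It therefore suffices to prove injectivity, \ie $\Pcal(X) \cap \Jcal(X) = \{0\}$. The cleanest route I see is to take the basis $\{Q_B : B \in \BB(X)\}$ from Proposition~\ref{Proposition:Pbasis} and argue, by induction on $\abs{X}$ using the standard deletion--contraction recurrences for $\Pcal$-spaces and for the cocircuit ideal, that the $Q_B$ remain linearly independent in the quotient. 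Composing the two isomorphisms then gives
\begin{align*}
\Dcal(X) \cong \left( \R[s_1,\ldots,s_r]/\Jcal(X) \right)^* \cong \Pcal(X)^*,
\end{align*}
and unwinding the identifications shows that the composite is exactly $f \mapsto \pair{\cdot}{f}|_{\Pcal(X)}$, which is the map in the statement.

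The main obstacle is the injectivity step $\Pcal(X) \cap \Jcal(X) = \{0\}$. Even though the dimensions match on the nose, producing a clean argument that the $Q_B$ remain independent modulo the cocircuit ideal is the real combinatorial content. An equivalent reformulation that may be easier to attack is surjectivity of $\pi$: every polynomial in $\R[s_1,\ldots,s_r]$ is congruent modulo $\Jcal(X)$ to some element of $\Pcal(X)$. This is plausible because for any $Y \subseteq X$ with $X \setminus Y$ non-spanning, $Y$ contains a cocircuit, so $p_Y \in \Jcal(X)$ automatically; only the products $p_Y$ whose complement spans, namely the generators of $\Pcal(X)$, can contribute non-trivially modulo the cocircuit ideal.
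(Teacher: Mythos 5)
The paper cites this proposition directly from \cite{dyn-ron-1990,jia-1990} and does not supply its own proof, so there is no in-paper argument to compare against; I will assess your proposal on its own terms.

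Your two-step scaffolding is the standard one and is sound. Step 1, the Macaulay duality isomorphism $\ker\Ical\cong(\R[s_1,\ldots,s_r]/\Ical)^*$, is correct exactly as you present it: well-definedness is the definition of $\ker\Ical$, injectivity follows from the degree-by-degree perfectness of the pairing, and surjectivity follows from equality of Hilbert series. Step 2 correctly reduces the proposition to the algebraic identity $\Pcal(X)\oplus\Jcal(X)=\R[s_1,\ldots,s_r]$.

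That identity, however, is where the real content lies, and you do not close it. You flag the injectivity step $\Pcal(X)\cap\Jcal(X)=\{0\}$ as "the real combinatorial content" and gesture at a deletion--contraction induction without carrying it out. More seriously, the alternative surjectivity argument you sketch is flawed as stated: the observation that $p_Y\in\Jcal(X)$ whenever $X\setminus Y$ does not span only tells you which of the products $p_Y$, $Y\subseteq X$, survive modulo the ideal. But those products do not span $\R[s_1,\ldots,s_r]$ (already for $X=(e_1,e_2)\subseteq\R^2$ you cannot write $s_1^2$ as a combination of $1$, $s_1$, $s_2$, $s_1s_2$), so controlling them does nothing toward proving $\Pcal(X)+\Jcal(X)=\R[s_1,\ldots,s_r]$. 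A genuine argument has to reduce arbitrary monomials modulo $\Jcal(X)$, not just products of linear forms drawn from $X$.

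Worth noting: the paper's Section~\ref{Section:MainCentral} contains a self-contained route you could have used instead. Lemma~\ref{Lemma:ContainmentCentral} places $\Bcyr(X)$ inside $\Dcal(X)$ and Lemma~\ref{Lemma:BaseDualityCentral} shows $\pair{Q_B}{\det(D)R^D_{X\setminus E(D)}}=\delta_{B,D}$, neither of which invokes Proposition~\ref{Proposition:PDduality}. Combined with $\dim\Dcal(X)=\abs{\BB(X)}=\dim\Pcal(X)$, this exhibits an explicit dual pair of bases and hence nondegeneracy of the pairing directly, sidestepping the $\Pcal\oplus\Jcal$ decomposition entirely.
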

The preceding proposition implies that the Hilbert series of $\Pcal(X)$ and $\Dcal(X)$ are equal.
By Proposition~\ref{Proposition:Pbasis}, this Hilbert series
is a matroid invariant and a specialisation of the Tutte polynomial.
These facts are summarised in the following proposition.
\begin{Proposition} % 
\label{Proposition:CentralTutteEval}
Let $X\subseteq U\cong \R^r$ be a list of vectors $N$ vectors that spans $U$. Then
 \begin{align}
  \hilb(\Dcal(X),q) = \hilb(\Pcal(X),q) &=  q^{N-r} T_{(X,\BB(X))}(1,\frac 1q) = \sum_{B\in \BB(X)} q^{N-r - \abs{E(B)}}.
 \end{align}
\end{Proposition}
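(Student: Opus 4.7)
The plan is to verify the three equalities as a straightforward corollary of results already established in the excerpt, assembling the argument as follows.

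First, for the equality $\hilb(\Dcal(X),q) = \hilb(\Pcal(X),q)$, the key observation is that the pairing $\pair{\cdot}{\cdot}$ from equation~\eqref{eq:pairingDefinition} is \emph{graded} in the following sense: if $p$ is homogeneous of degree $a$ and $f$ is homogeneous of degree $b$, then $p(\partial/\partial t_1,\ldots,\partial/\partial t_r)\,f$ is homogeneous of degree $b-a$ (and vanishes for $a>b$), so its constant part vanishes unless $a=b$. Consequently, the duality isomorphism $\Dcal(X)\to\Pcal(X)^*$ provided by Proposition~\ref{Proposition:PDduality} respects the grading: the degree-$k$ component of $\Dcal(X)$ is identified with the dual of the degree-$k$ component of $\Pcal(X)$. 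Equating dimensions grade by grade yields the first equality.

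Second, I compute $\hilb(\Pcal(X),q)$ using the basis $\{Q_B : B\in\BB(X)\}$ provided by Proposition~\ref{Proposition:Pbasis}, where $Q_B = p_{X\setminus(B\cup E(B))}$. Since $E(B)$ is by definition a subset of $X\setminus B$, the sets $B$ and $E(B)$ are disjoint, so $|X\setminus(B\cup E(B))| = N-r-|E(B)|$, and $Q_B$ is homogeneous of this degree (it is a product of that many linear forms). Summing the contributions over $B\in\BB(X)$ gives the expression $\sum_{B\in\BB(X)} q^{N-r-|E(B)|}$.

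Finally, the middle equality $q^{N-r} T_{(X,\BB(X))}(1,1/q) = \sum_{B\in\BB(X)} q^{N-r-|E(B)|}$ is immediate from the definition of the Tutte polynomial: $T_{(X,\BB(X))}(1,1/q) = \sum_{B\in\BB(X)} q^{-|E(B)|}$, and multiplying by $q^{N-r}$ produces the claimed identity. There is no genuine obstacle here; the only point requiring a small verification rather than pure bookkeeping is the graded nature of the pairing used in the first paragraph. The second paragraph is a basis count read off directly from Proposition~\ref{Proposition:Pbasis}, and the third paragraph is just the definition of $T_{(X,\BB(X))}$.
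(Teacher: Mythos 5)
Your proposal is correct and follows essentially the same route the paper sketches: equality of Hilbert series via the graded duality of Proposition~\ref{Proposition:PDduality}, and the Tutte-polynomial expression read off from the basis $\Bcal(X)$ of Proposition~\ref{Proposition:Pbasis}. The paper simply states these two facts without further elaboration, so your explicit verification that the pairing respects the grading is a welcome (and accurate) filling-in of detail rather than a departure.
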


\begin{Remark}
 Most of the results mentioned above also  
 hold over other fields of characteristic zero or even over arbitrary fields.
 However, in this paper we work over the reals, because the construction in Section~\ref{Section:BasisElementsConstruction}
 uses distributions.
% 
% 
% 
%  Nevertheless, we believe that some of our results also hold over other fields.
\end{Remark}

\subsection{Organisation of the article}
The remainder of this article is organised as follows.
In Section~\ref{Section:Background} we provide some additional mathematical background, in particular on splines.

In Section~\ref{Section:BasisElementsConstruction}
 we construct certain polynomials $R^B$ as convolutions of differences of multivariate splines.
In Section~\ref{Section:MainCentral} we show that
\begin{align}
 \Bcyr(X) :=  \{ \abs{\det(B)} R^B : B\in \BB(X)   \} 
\end{align}
is a basis for $\Dcal(X)$ and we prove that this basis is dual 
to the basis $\Bcal(X)$ for $\Pcal(X)$.
In Section~\ref{Section:DelConExactSequencesCentral} we discuss deletion-contraction and two short exact sequences.
In Section~\ref{Section:Combinatorics} % 
 we introduce a new combinatorial structure called forward exchange matroid. 
 This is an ordered matroid together with a subset $\BB'$ of its set of bases 
 with the so-called forward exchange property.
 In Section~\ref{Section:GeneralZonotopalSpaces} we 
 introduce the generalised $\Pcal$-space  
 $\Pcal(X,\BB'):= \spa \{ Q_B   : B \in \BB' \}$ and the generalised $\Dcal$-space $\Dcal(X,\BB')$.
 We show that 
most of the results that we described in Subsection~\ref{Subsection:CentralZA}
 and Section~\ref{Section:MainCentral}
 still hold for these  spaces if $\BB'$ has the forward exchange property.
For example, the two spaces are dual and
 a suitable subset of $\Bcyr(X)$ turns out to be a basis for $\Dcal(X,\BB')$.
Furthermore, $\Dcal(X,\BB')$ and $\Pcal(X,\BB')$ have deletion-contraction decompositions
 that are related to the deletion-contraction reduction of the Tutte polynomial.

In Section~\ref{Section:ZonotopalAlgebra} we review the previously known zonotopal spaces 
 and we show that they are  special cases of our spaces $\Dcal(X,\BB')$ and $\Pcal(X,\BB')$.

\subsection*{Acknowledgements}
 The author would like to thank Olga Holtz 
 and  Martin G\"otze for discussions and   helpful suggestions. He is grateful to 
 Zhiqiang Xu for pointing out the paper \cite{xu-2011}.

\section{Preliminaries}
\label{Section:Background}
In this section we provide some mathematical background.
In Subsection~\ref{Subsection:DiscreteGeometry} we define some objects from discrete geometry.
In Subsection~\ref{Subsection:Distributions} we review distributions and
in Subsection~\ref{Subsection:Splines} we discuss box splines and multivariate splines.
Subsection~\ref{Subsection:LeastMap} contains information about previously known algorithms for the
 construction of bases
 for $\Dcal$-spaces.

\subsection{Cones and zonotopes}
\label{Subsection:DiscreteGeometry}
\begin{Definition}
 Let $X=(x_1,\ldots, x_N)\subseteq U \cong \R^r$ be a list of vectors. Then we define 
the \emph{zonotope} $Z(X)$ and the \emph{cone} $\cone(X)$ by
 \begin{align}
 Z(X):= \left\{ \sum_{i=1}^N \lambda_i x_i : 0\le \lambda_i \le 1  \right\}
 \text{ and }
 \cone(X) := \left\{ \sum_{i=1}^N \lambda_i x_i : \lambda_i \ge 0 \right\}.
 \end{align}
\end{Definition}
Zonotopes are closely connected to zonotopal algebra. For example,  if $X$ is %totally 
unimodular, then
 $\vol(Z(X))=\dim\Pcal(X)=\dim\Dcal(X)$ and the space $\Pcal(X)$ can be obtained from the
 set of lattice points in the half-open zonotope via % 
least map interpolation that is explained 
 in Subsection~\ref{Subsection:LeastMap}.

\subsection{Distributions}
\label{Subsection:Distributions}
In this paper we are mainly interested in multivariate polynomials.
However, in the construction in Section~\ref{Section:BasisElementsConstruction} more general
 objects appear in  intermediate steps.
 In this construction, we need ''generalised polynomials`` whose support is contained in a subspace.
 Furthermore, we use convolutions and % 
 the fact that convolutions and partial derivatives commute.
 Distributions have all of the desired properties.
 In this subsection we summarise  important facts about distributions
 that we will need later on. For a detailed introduction to the subject, we refer the reader to 
 Laurent Schwartz's book \cite{schwartz-1966}.

A \emph{distribution} on a vector space $U\cong \R^r$ (or an open subset of $U$) 
 is a continuous linear functional that maps a test function to a real number. 
 \emph{Test functions} are compactly supported smooth functions $U\to \R$.
An important example of a distribution is the delta distribution $\delta_x$ given by $\delta_x(\varphi):=\varphi(x)$.
 A locally integrable function $f : U \to \R$  defines a distribution $T_f$ in the following way:
\begin{align}
  T_f (\varphi) := \int_{U} f(u) \varphi(u) \, \di u.  
\end{align}
Recall that for two functions $f,g: U \to \R$, the \emph{convolution} is defined as
\begin{align}
 f*g:=\int_U  f(u)g(\cdot - u) \,\di u.
\end{align}
This is well-defined only if $f$ and $g$ decay sufficiently rapidly at infinity in order for the integral to exist.
The convolution of two distributions can also be defined under certain conditions.

A distribution $T$ vanishes on a set $\Gamma\subseteq U$ if $T(\varphi)=0$ for all test functions whose support is contained in 
 $\Gamma$.
The \emph{support} $\supp(T)$ of  $T$ is the complement of the maximal open set on which $T$ vanishes.

Let $S_\xi$ and $T_\eta$ be two distributions 
for which  % 
  $\supp(S_\xi) \cap (K - \supp(T_\eta))$ is compact for any compact set $K$.
Let $\varphi: U \to \R$ be a test function with support $K$. Then, we define the convolution 
\begin{align}
  (S_\xi * T_\eta)(\varphi) % 
   := S_\xi ( T_\eta ( \alpha(\xi) \varphi(\xi+\eta))),
\end{align}
where $\alpha$ denotes a test function that  equals $1$ on a neighbourhood of 
 $ \supp(S_\xi) \cap (K - \supp(T_\eta)) $. 
When evaluating $ T_\eta (\alpha(\xi) \varphi( \xi + \eta ) ) $, we think of $\varphi(\xi+\eta)$ as a function in $\eta$ 
 and of $\xi$ as a fixed parameter. % 
 Then, $ T_\eta ( \alpha(\xi) \varphi( \xi + \eta ) )$ is a function in $\xi$ with compact support
 that is contained in $K-\supp(T_\eta)$.
 Note that the definition of $(S_\xi * T_\eta)(\varphi)$ is independent of the choice of the function $\alpha$.
 The multiplication by $\alpha$ is necessary to ensure that
   $T_\eta ( \alpha(\xi) \varphi( \xi + \eta ) )$ as a function in $\xi$ has compact support.

Note that the convolution of two distributions is a commutative operation and $T*\delta_0 = T$.
Let $u\in U$. 
 The partial derivative of a distribution $T$ in direction $u$ is defined by
 $(D_u T)(\varphi) := -T (D_u\varphi)$.
Convolutions of distributions have the same nice property with respect to partial derivatives as
 convolutions of functions. Namely,
 if $T_1$ and $T_2$ are distributions on $U$ and $u\in U$, then
\begin{align}
\label{eq:ConvDiffCommute}
  D_u(T_1*T_2) = (D_u T_1) * T_2 = T_1 * (D_u T_2).
\end{align}

\subsection{Splines}
\label{Subsection:Splines}

In this subsection we introduce multivariate splines and box splines as in
 \cite[Chapter 7]{concini-procesi-book}. Another good reference is \cite{BoxSplineBook}.
\begin{Definition}
\label{Definition:Splines}
 Let $X\subseteq U\cong \R^r$ be a finite list of vectors.
The \emph{multivariate spline} (or truncated power) $T_X$ and the \emph{box spline} $B_X$ are distributions that are characterised 
 by the formulae
\begin{align}
 \int_{U}  f(u) B_X(u) \,\di u &= \int_0^1 \cdots \int_0^1 f 
             \left( \sum_{i=1}^N \lambda_ix_i \right) \di \lambda_1 \cdots \di \lambda_N  \\
 \text{and } \int_{U}  f(u) T_X(u) \,\di u &= \int_0^\infty \cdots \int_0^\infty 
	      f \left( \sum_{i=1}^N \lambda_ix_i \right) \di \lambda_1 \cdots \di \lambda_N.
\end{align}
\end{Definition}
The multivariate spline is well-defined only if the convex hull 
 of the vectors in $X$ does not contain $0$
   or equivalently, if there is a functional 
 $\varphi\in V$ \st $\varphi(x)>0$ for all $x\in X$.
 If all vectors are non-zero,
 it is of course always possible to multiply  
 certain entries of the list $X$ by $-1$ \st this condition is satisfied.
 Note that in Definition~\ref{Definition:Splines}, we do 
 \emph{not} require that $X$ spans $U$ in contrast to most of the rest of this paper.

$B_X$ and $T_X$ can be identified with the functions
\begin{align}
B_X(u) &= \frac{1}{\sqrt{\det(XX^T)}}\vol\nolimits_{N- \dim(\spa(X))} \left(\{ z \in [0;1]^N : Xz = u \}\right)  
\label{eq:BoxSplineVolumeFormula}
\\
\text{and } T_X(u) &= \frac{1}{\sqrt{\det(XX^T)}}\vol\nolimits_{N- \dim(\spa(X))} \left( \{ z \in \R^N_{\ge 0} : Xz = u \}\right). 
\label{eq:MultSplineVolumeFormula}
\end{align}
It follows immediately from \eqref{eq:BoxSplineVolumeFormula} and \eqref{eq:MultSplineVolumeFormula}
 that $B_X$ is supported in the zonotope $Z(X)$ and $T_X$ is supported in the cone $\cone(X)$.
For a basis $C\subseteq U$,
\begin{align}
  B_C  = \frac{\chi_{Z(C)}}{\abs{\det(C)}}  \text{ and }
  T_C  = \frac{\chi_{\cone(C)}}{\abs{\det(C)}}.
\label{eq:multivariateBasis}
\end{align}

\begin{Remark}
 The box spline can easily be obtained from the  multivariate spline. 
 Namely,
\begin{align}
B_X(x) =\sum_{S\subseteq X} (-1)^{\abs S} T_X\left(x- a_S\right),
\end{align}
where $a_S:=\sum_{a\in S} a$.

From now on, we will only consider $T_X$. We introduced the box spline only because of its importance in approximation theory.
\end{Remark}

\begin{Theorem}% 
\label{Theorem:SplineLocallyPolynomial}
 Let $X\subseteq U\cong \R^r$ be a finite list of vectors that spans $U$ and whose convex hull does not contain $0$.
The cone $\cone(X)$ can be decomposed into finitely many cones $C_i$ \st
 $T_X$ restricted to each $C_i$ is a homogeneous polynomial of degree $N-r$.

\end{Theorem}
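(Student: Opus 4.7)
The plan is to induct on $N - r \ge 0$. For the base case $N = r$, the list $X$ is itself a basis of $U$, and \eqref{eq:multivariateBasis} gives $T_X = \chi_{\cone(X)}/\abs{\det(X)}$, which on $\cone(X)$ coincides with the constant $1/\abs{\det(X)}$, a homogeneous polynomial of degree $0 = N - r$; so we take the single piece $C_1 = \cone(X)$.

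For the inductive step $N > r$, choose $x \in X$ that is not a coloop (such $x$ exists because $N > r$ forces at least one vector of $X$ to lie in a circuit), and set $Y := X \setminus x$, so that $Y$ still spans $U$ and $\abs{Y} = N - 1$. Separating the $\lambda_x$-integral in Definition~\ref{Definition:Splines} yields the convolution recurrence
\begin{align}
T_X(u) \;=\; \int_0^\infty T_Y(u - t x)\, \di t,
\end{align}
where the integrand has compact support in $t$ for each $u$ in the interior of $\cone(X)$, since $u - tx$ eventually exits $\cone(Y)$. Now let $\Hcal$ be the central hyperplane arrangement in $U$ formed by the linear spans of all $(r-1)$-element sublists of $X$, and let the $C_i$ be the closed cones whose relative interiors are the connected components of $\cone(X) \setminus \bigcup \Hcal$. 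By the induction hypothesis applied to $Y$, the spline $T_Y$ is a homogeneous polynomial of degree $N - 1 - r$ on each chamber of the coarser sub-arrangement $\Hcal_Y \subseteq \Hcal$.

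For $u$ in the relative interior of a fixed $C_i$, the ray $\{u - t x : t \ge 0\}$ crosses the walls of $\Hcal_Y$ at values $0 = t_0(u) < t_1(u) < \cdots < t_m(u)$ that depend linearly on $u$, and $T_Y(u - tx) = 0$ for $t > t_m(u)$. Writing $q_\ell$ for the polynomial piece of $T_Y$ on the relevant chamber, one obtains
\begin{align}
T_X(u) \;=\; \sum_{\ell = 0}^{m - 1} \int_{t_\ell(u)}^{t_{\ell+1}(u)} q_\ell(u - t x)\, \di t,
\end{align}
which is a polynomial in $u$ of degree $N - r$ on $C_i$. Homogeneity of this degree is immediate from the substitution $t \mapsto \alpha t$ under $u \mapsto \alpha u$ together with the homogeneity of $T_Y$ of degree $N - 1 - r$.

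The main obstacle is proving that the combinatorial type of the sequence of ray-crossings $t_\ell(u)$ is locally constant on $C_i$, so that the integration produces a \emph{single} polynomial (rather than a mere piecewise polynomial) on $C_i$. This is the reason for enlarging $\Hcal_Y$ to $\Hcal$ by the hyperplanes $\spa(\{x\} \cup W')$ with $W' \subseteq Y$ of size $r - 2$: these are precisely the walls that separate the regions of $\cone(X)$ where the ray $u - tx$ would start or stop crossing a given wall of $\Hcal_Y$. Verifying this combinatorial claim, and then checking that the resulting polynomial expressions glue across the refinement so that the $C_i$ really do yield a valid decomposition of $\cone(X)$, is the technical heart of the argument.
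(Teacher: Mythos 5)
The paper states Theorem~\ref{Theorem:SplineLocallyPolynomial} as background without proof (it is a classical fact, proved for instance in the book of De Concini and Procesi and in the box-spline literature), so there is no internal proof to compare against; your argument is essentially the standard inductive one. The outline is sound: the base case from \eqref{eq:multivariateBasis} is correct, the recursion via \eqref{eq:multivariateInductive} after deleting a non-coloop $x$ is the right reduction, and the homogeneity computation via the substitution $t\mapsto\alpha t$ is correct. Two points should be tightened. First, $\Hcal$ should be taken to consist of the hyperplanes spanned by \emph{independent} $(r-1)$-element sublists of $X$ (a general $(r-1)$-sublist may span something of lower dimension); with this reading, the boundary of $\cone(X)$ does lie in $\Hcal$, so the closed chambers of $\Hcal$ inside $\cone(X)$ give the desired conical decomposition. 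Second, the ``combinatorial heart'' you flag is genuinely there, but the worry about ``gluing across the refinement'' is misdirected: the theorem only asks for \emph{some} finite decomposition into cones on which $T_X$ is polynomial, so no compatibility between adjacent pieces is needed.

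The remaining verification you leave open is real but short, and your identification of the relevant walls is slightly imprecise. For a wall $H\in\Hcal_Y$ with $x\notin H$, the crossing time is the linear fraction $t_H(u)=\ell_H(u)/\ell_H(x)$, so whether $t_H(u)>0$ is governed by the sign of $\ell_H(u)$, i.e.\ by the $\Hcal_Y$-chamber of $u$ (walls with $x\in H$ are never crossed). The thing that can change \emph{within} an $\Hcal_Y$-chamber is the \emph{ordering} of the positive crossing times: $t_{H_1}(u)=t_{H_2}(u)$ cuts out the hyperplane $\{u:\ell_{H_1}(x)\ell_{H_2}(u)-\ell_{H_2}(x)\ell_{H_1}(u)=0\}$, which contains both $x$ and $H_1\cap H_2$, hence equals $\spa(x\cup W')$ for a suitable $(r-2)$-dimensional $W'\subseteq Y\cap H_1\cap H_2$, and is therefore already a wall of $\Hcal$. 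Once this is spelled out, the crossing sequence and the chamber sequence of $\Hcal_Y$ visited by the ray are constant on each open chamber of $\Hcal$, each $t_\ell(u)$ is a fixed linear form, and the telescoped sum of integrals is a single polynomial of degree $N-r$ on that chamber. With these clarifications the proof is complete and correct.
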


\begin{Theorem}
 The space spanned by the local pieces of the multivariate spline $T_X$ and their
  partial derivatives is equal to the  Dahmen--Micchelli space $\Dcal(X)$ that was defined 
 in Definition~\ref{Definition:DahmenMicchelli}.
\end{Theorem}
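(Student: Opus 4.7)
The plan is to prove the equality by establishing both inclusions separately; the ``$\subseteq$'' side (that every local piece of $T_X$ and every derivative thereof lies in $\Dcal(X)$) is the easier one and relies on a differential identity. I would first derive $D_x T_X = T_{X \setminus x}$ for each $x \in X$, either by integration by parts applied to Definition~\ref{Definition:Splines} (exploiting decay of $T_X$ inside $\cone(X)$) or by comparing Laplace transforms on the interior of the dual cone, where $T_X$ has transform $\prod_{y \in X} \pair{\cdot}{y}^{-1}$. Iterating yields $p_Y(\partial) T_X = T_{X \setminus Y}$ for every sublist $Y \subseteq X$. When $T \subseteq X$ is a cocircuit, $X \setminus T$ has rank strictly less than $r$, so $T_{X \setminus T}$ is supported in the proper subspace $\spa(X \setminus T)$. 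By Theorem~\ref{Theorem:SplineLocallyPolynomial}, on any top-dimensional chamber $\Omega$ the distribution $T_X$ is represented by a polynomial $p_\Omega$; hence $p_T(\partial) p_\Omega$ is a polynomial vanishing on the non-empty open set $\Omega \setminus \spa(X \setminus T)$, so it vanishes identically. Thus $p_\Omega \in \ker \Jcal(X) = \Dcal(X)$, and since $\Dcal(X)$ is closed under further partial differentiation, the same conclusion holds for every derivative of a local piece.

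For the reverse inclusion, let $W$ denote the span of the local pieces of $T_X$ together with their partial derivatives. Since $\dim \Dcal(X) = \abs{\BB(X)}$ by Dahmen-Micchelli, it suffices to prove $\dim W \geq \abs{\BB(X)}$. I would proceed by induction on $\abs{X}$. The base case, when $X$ is itself a basis $B$, is immediate: by \eqref{eq:multivariateBasis}, $T_B = \abs{\det B}^{-1}\chi_{\cone(B)}$ has the single nonzero local piece $\abs{\det B}^{-1}$, and $\Dcal(B) = \R$ since all cocircuits are singletons. For the inductive step, choose $x \in X$ that is neither a loop nor a coloop. The identity $D_x T_X = T_{X \setminus x}$ produces, after suitable chamber bookkeeping, a surjection from $W$ onto the corresponding space $W_{X \setminus x}$ for $X \setminus x$, which by induction equals $\Dcal(X \setminus x)$. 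Combining this with the deletion-contraction short exact sequence $0 \to \Dcal(X/x) \to \Dcal(X) \to \Dcal(X \setminus x) \to 0$ to be established in Section~\ref{Section:DelConExactSequencesCentral}, the problem reduces to exhibiting $\Dcal(X/x)$ inside $W$.

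The main obstacle is this last reduction: one must realise the subspace $\Dcal(X/x)$, which lives naturally on the quotient $U/\spa(x)$, inside $W$ as linear combinations of local pieces of $T_X$ (and their derivatives) not yet accounted for by derivatives in the direction $x$. The natural route is to integrate or average $T_X$ along the $x$-direction, producing a distribution on $U/\spa(x)$ proportional to $T_{X/x}$, and to lift this procedure to the level of local pieces. Making this precise, and in particular checking that the resulting contributions are linearly independent of the derivatives used for the $\Dcal(X \setminus x)$ part, is the technical heart of the proof.
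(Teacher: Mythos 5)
The paper states this result in its preliminaries section as background, with no proof; it is a standard result from the box-spline literature (see, e.g., \cite{BoxSplineBook} or \cite{concini-procesi-book}), so there is no paper-internal argument to compare against.

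Your argument for the inclusion ``local pieces and their derivatives lie in $\Dcal(X)$'' is correct and is the standard one: combine the identity $D_Y T_X = T_{X\setminus Y}$ (recorded in the paper as \eqref{eq:SplineDiff}) with the observation that for a cocircuit $T$ the distribution $T_{X\setminus T}$ is supported in the proper subspace $\spa(X\setminus T)$, then conclude by polynomial rigidity that $p_T(\partial)\,p_\Omega = 0$ for every full-dimensional chamber $\Omega$. Since $\Dcal(X)$ is closed under differentiation, this handles the whole span $W$.

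The reverse inclusion is where your sketch has a genuine gap, as you yourself flag. Two concrete problems arise in the inductive step. First, the pushforward of $T_X$ along $\pi_x$ is (a constant multiple of) $T_{X/x}$ as a distribution, but this pushforward is an integral over the fibres of $\pi_x$: it does not send local pieces of $T_X$ to local pieces of $T_{X/x}$, and it does not realise $\Dcal(X/x)$ as a subspace of $W$, so ``lifting the procedure to the level of local pieces'' is not a routine matter but the entire difficulty. Second, the claimed surjectivity of $D_x : W \to W_{X\setminus x}$ requires showing that every local polynomial of $T_{X\setminus x}$ arises by differentiating a local polynomial of $T_X$; the chambers of $\cone(X)$ refine those of $\cone(X\setminus x)$, but the needed bookkeeping is nontrivial and is exactly what you defer. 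Without filling one of these holes the dimension count does not close. A cleaner way to establish the reverse inclusion is to exhibit $\abs{\BB(X)}$ linearly independent local pieces directly; this is in effect what the paper's own Sections~\ref{Section:BasisElementsConstruction}--\ref{Section:MainCentral} accomplish, via Proposition~\ref{Proposition:BasisElementIsLocalPiece} (the $R^B$'s are local pieces of suitably reoriented multivariate splines) and Theorem~\ref{Theorem:MainTheoremCentral} (they form a basis of $\Dcal(X)$), and this constructive route avoids the deletion-contraction induction you attempt.
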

The multivariate spline can also be defined inductively by
 the  convolution formula
\begin{align}
 T_{(X,x)} &=  T_X * T_{(x)} = \int_0^\infty T_X(\cdot - \lambda x) \,\di \lambda 
\label{eq:multivariateInductive}
\end{align}
using \eqref{eq:multivariateBasis} as a starting point.
In particular, 
  $T_X = T_{x_1} * \cdots * T_{x_N}$. 
Since $D_x T_{x} = \delta_0$, the convolution formula implies
 for $Y \subseteq X$ that 
\begin{align}
\label{eq:SplineDiff}
 D_Y T_X = T_{X\setminus Y}, \text{ where } D_Y:=\prod_{x\in Y} D_x.
\end{align}

\begin{Example}
\label{Example:FirstSplineCalculation}
We consider the same list $X$ as in Example~\ref{Example:MatrixBases}.
By \eqref{eq:multivariateBasis}, $T_{(x_1,x_2)}$ is the indicator function of $\R^2_{\ge 0}$.
Then, by \eqref{eq:multivariateInductive}, we can deduce
\begin{align}
 T_X(s_1,s_2) = \int_0^\infty \chi_{\R^2_{\ge 0}}( s_1- \lambda, s_2- \lambda) \, \di \lambda = \min(s_1,s_2)
\end{align}
for $s_1,s_2\ge 0$.
See Figure~\ref{Figure:Splines} for a graphic description of $T_X$.
\end{Example}
\begin{figure}
\begin{center}
\input{BoxSpline3.pspdftex}
\hspace*{4.5mm}
\input{MultivariateSpline.pspdftex}
\end{center}
\caption{The box spline and the multivariate spline defined by the list $X$ in Example~\ref{Example:MatrixBases}.
The multivariate spline is calculated in Example \ref{Example:FirstSplineCalculation}. On the right, the dashed lines
 are level curves.
\\
 The image on the left hand side is taken from Matthias Lenz, Interpolation, Box Splines, and Lattice Points in Zonotopes, 
International Mathematics Research Notices, 2014, vol.~2014, no.~20, p.~5699, \url{http://dx.doi.org/10.1093/imrn/rnt142},
by permission of Oxford University Press. The image on the right hand side was created by the author in 2012.
It is made available under the Creative Commons Attribution 4.0 International License.
To view a copy of this license, visit \url{http://creativecommons.org/licenses/by/4.0/}.
}
\label{Figure:Splines}
\end{figure}
\begin{Example}
Let  $X_i:=(\underbrace{1,\ldots, 1}_{i \text{ times}})$. Then,
\begin{align}
 T_{X_1}(s) &= \chi_{\R_{\ge 0}}(s) \\
\text{and}\quad T_{X_{i+1}}(s) &= \int_0^\infty T_{X_{i}}(s- \lambda) \,\di \lambda 
     = \int_0^s \frac{\lambda^{i-1}}{(i-1)!} \,\di \lambda= \frac{s^{i}}{i!} \text{ for } s\ge 0.
\end{align}
 \end{Example}

\subsection{Previously known methods for constructing bases for $\Dcal$-spaces}
\label{Subsection:LeastMap}
Two other methods are known to construct a basis for $\Dcal(X)$. 
 However, our algorithm has several advantages over the other two: 
 it is canonical, \ie
 it only depends on the order of the list $X$ %and no further choices 
 and it yields a basis that is dual to
 the known basis $\Bcal(X)$ for the $\Pcal$-space.

In Wolfgang Dahmen's construction \cite{dahmen-1990}, polynomials are chosen as basis elements that are local pieces of 
 certain multivariate splines. 
 For certain choices of the parameters in his construction, it might yield the same basis as ours.

We describe the second construction in more detail.
It uses the so-called least map interpolation 
that was introduced by  Carl de Boor and Amos Ron \cite{boor-ron-1991}. 
Given a finite set $S\subseteq V$, they construct 
  a space of polynomials   $\Pi(S)\subseteq \sym(V)$ of dimension $\abs{S}$
 with certain nice properties.
 Recall that $U\cong \R^r$, $V$ denotes the dual space and
 a vector $v\in V$ defines a linear form $p_v\in \R[t_1,\ldots, t_r]\cong \sym(V)$. 
 We define the exponential function as usual by
 \begin{align}
   e^{v} := \sum_{j\ge 0} \frac{ p_{v}^j}{j!} \in \R[[t_1,\ldots, t_r]] \cong \sym(U)^*.    
 \end{align}
 The least map $\downarrow$ maps a non-zero element of the ring of formal power series $\R[[t_1,\ldots, t_r]]$ to 
 its homogeneous component of  lowest degree that is non-zero. Furthermore, $0_\downarrow:=0$.
  The \emph{least space} of a finite set $S\subseteq V$ is defined as
\begin{align}
 \Pi(S) := \spa \{  f_\downarrow   : f\in\spa \{ e^{v} : v\in S \} \} \subseteq \R[t_1,\ldots, t_r].
\end{align}
Let $x\in U$ and $c_x\in \R$. 
 This defines a \emph{hyperplane} 
\begin{align}
 H_{x,c_x}:= \{ v\in V : v(x) = c_x  \}.
\end{align}
If we fix a vector $c\in\R^X$, we obtain a 
 hyperplane arrangement $\Hcal(X,c) = \{ H_{x,c_x} : x\in X\}$.

Every basis $B\subseteq X$ determines a unique vertex
$\theta_B\in V$ of the hyperplane arrangement $\Hcal(X,c)$ that satisfies
 $\theta_B(x)=c_x $ for all $x\in B$. 
 In matrix notation,
$\theta_B= B^{-1} c_B$, where $c_B$ denotes the restriction of $c$ to $\R^B$.
If the vector $c$ is sufficiently generic,
  then $\theta_B \neq \theta_{B'}$ for distinct bases $B$ and $B'$.
  In this case, the hyperplane arrangement $\Hcal(X,c)$ is said to be in general position. 
 For more information on hyperplane arrangements, see \cite{stanley-2007}.

The following surprising theorem makes a connection between hyperplane arrangements and the space $\Dcal(X)$.
 It generalises to other $\Dcal$-spaces (see \cite{holtz-ron-2011,holtz-ron-xu-2012,li-ron-2013}).
\begin{Theorem}[\cite{boor-ron-1991}]
\label{Theorem:HyperplaneArrangementD}
\Xintro
Let $c\in\R^X$ be a vector \st the hyperplane arrangement  $\Hcal(X,c)$
 is in general position and let  $S$ be the set of vertices of $\Hcal(X,c)$.
 Then
\begin{align}
\Dcal(X)=\Pi(S).
\end{align}
\end{Theorem}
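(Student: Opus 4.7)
My plan is to show $\Pi(S) \subseteq \Dcal(X)$ directly and then match dimensions. For the dimensions, general position of $\Hcal(X,c)$ makes $B \mapsto \theta_B$ a bijection from $\BB(X)$ onto $S$, so $|S| = |\BB(X)|$, which equals $\dim \Dcal(X)$ by the Dahmen--Micchelli theorem cited after Definition~\ref{Definition:DahmenMicchelli}. On the other side, $\dim \Pi(S) = |S|$ is a basic property of the least map of de Boor and Ron which can be invoked. Thus the inclusion will suffice.

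For the inclusion, the key idea is to exhibit, for each cocircuit $T$, an inhomogeneous polynomial whose top-degree form is the generator $p_T$ of $\Jcal(X)$ and which vanishes on every vertex $\theta_B \in S$. The natural candidate is
\begin{align}
 \tilde p_T(s) := \prod_{x \in T} \bigl(p_x(s) - c_x\bigr).
\end{align}
Since $T$ is a cocircuit we have $T \cap B \neq \emptyset$ for every basis $B$; choosing any $y \in T \cap B$ yields $\theta_B(y) = c_y$, so the $y$-th factor vanishes. Hence $\tilde p_T$ vanishes on $S$, and the eigen-identity $p(\partial) e^v = p(v) e^v$ (a direct consequence of $\partial_{t_i} e^v = v(s_i) e^v$) implies that $\tilde p_T(\partial)$ annihilates the exponential span $\spa\{e^{\theta_B} : B \in \BB(X)\}$.

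Finally, I would extract the lowest-degree homogeneous component of this identity. Write $\tilde p_T = p_T + \tilde p_T'$ with $\deg \tilde p_T' < |T|$, and apply $\tilde p_T(\partial)$ to some $f$ in the exponential span whose least component $q := f_\downarrow$ is homogeneous of degree $k$. The degree-$(k - |T|)$ piece of the identity $\tilde p_T(\partial) f = 0$ receives no contribution from $\tilde p_T'(\partial)$, since that would require homogeneous pieces of $f$ of degree strictly below $k$; hence only $p_T(\partial) q$ survives and must vanish. By linearity this yields $p_T(\partial) q = 0$ for every $q \in \Pi(S)$ and every cocircuit $T$, so $\Pi(S) \subseteq \ker \Jcal(X) = \Dcal(X)$. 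The step that requires genuine insight is the construction of the witness $\tilde p_T$ — recognising that the cocircuit generators of $\Jcal(X)$ arise precisely as leading forms of natural polynomials vanishing on $S$. The subsequent degree bookkeeping is routine, and the role of general position is confined to the dimension count.
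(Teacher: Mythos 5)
The paper cites Theorem~\ref{Theorem:HyperplaneArrangementD} directly from \cite{boor-ron-1991} and gives no in-paper proof, so there is nothing to compare your argument against within this source; I evaluate it on its own. Your proof is correct: the witness $\tilde p_T = \prod_{x\in T}(p_x - c_x)$ vanishes on every vertex $\theta_B$ because a cocircuit $T$ meets every basis $B$, the eigen-identity $p(\partial)e^v = p(v)e^v$ converts this vanishing into $\tilde p_T(\partial)f = 0$ for all $f$ in the exponential span, and the degree filtration argument correctly isolates $p_T(\partial)f_\downarrow$ as the unique degree-$(k-|T|)$ contribution, since $p_T(\partial)f'$ and $\tilde p_T'(\partial)(q+f')$ all land in strictly higher degree. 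Combined with the dimension count $|S| = |\BB(X)| = \dim\Dcal(X) = \dim\Pi(S)$ (general position for the first equality, Dahmen--Micchelli for the second, and the dimension-preservation property $\dim\Pi(\Theta)=|\Theta|$ of the least map for the third), the single inclusion $\Pi(S)\subseteq\ker\Jcal(X)$ gives equality. This is in fact the core mechanism underlying the de Boor--Ron theory (leading forms of polynomials vanishing on a point set annihilate its least space), so your reconstruction lands on the intended argument.
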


De Boor and Ron gave a method to select 
a basis from
$\Pi(S)$ in  \cite{deBoor-Ron-computational-1992} 
(see   \cite[Chapter II]{BoxSplineBook} for a summary).
Their construction depends on the choice of the vector $c$, an ordering of the bases and an ordering
 of $\N^r$ while our construction only depends on the order on $X$. % 

\begin{Example}
 This is a continuation of Example~\ref{Example:MatrixBases}.
\\
\begin{minipage}{0.74\textwidth}
Let  $c_1=c_2=0$ and $c_3=1$. The set of vertices of
 $\Hcal(X,c)$ is $S=\{(0,0), (1,0), (0,1)\}$.
Then, 
\begin{align*}
 \Pi(S) &=  \spa \{ f_\downarrow : f\in \spa \{ 1, e^{t_1}, e^{t_2} \}\}  \\
        &=  \spa \{1, t_1, t_2\}, \text{ since } 1=1_\downarrow,\; \\
 &\qquad  t_1= (e^{t_1}-1)_\downarrow,\text{ and } t_2= (e^{t_2}-1)_\downarrow.
\end{align*}
\end{minipage}
\begin{minipage}{0.25\textwidth}
\hfill
 \begin{picture}(0,0)%
\includegraphics{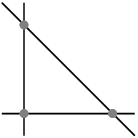}%
\end{picture}%
\setlength{\unitlength}{3108sp}%
\begingroup\makeatletter\ifx\SetFigFont\undefined%
\gdef\SetFigFont#1#2#3#4#5{%
  \reset@font\fontsize{#1}{#2pt}%
  \fontfamily{#3}\fontseries{#4}\fontshape{#5}%
  \selectfont}%
\fi\endgroup%
\begin{picture}(1394,1394)(33279,-13808)
\put(33566,-13493){\makebox(0,0)[lb]{\smash{{\SetFigFont{8}{9.6}{\rmdefault}{\mddefault}{\updefault}{\color[rgb]{0,0,0}$(0,0)$}%
}}}}
\put(33618,-12661){\makebox(0,0)[lb]{\smash{{\SetFigFont{8}{9.6}{\rmdefault}{\mddefault}{\updefault}{\color[rgb]{0,0,0}$(0,1)$}%
}}}}
\put(34405,-13466){\makebox(0,0)[lb]{\smash{{\SetFigFont{8}{9.6}{\rmdefault}{\mddefault}{\updefault}{\color[rgb]{0,0,0}$(1,0)$}%
}}}}
\end{picture}%

\end{minipage}
\end{Example}

\section{Construction of basis elements}
\label{Section:BasisElementsConstruction}

In this section we construct a polynomial $R_Z^B$ in $\R[s_1,\ldots, s_r]$, given
  a finite list 
 $Z\subseteq U \cong \R^r$ and a basis $B\subseteq Z$.
  Later on we show that polynomials of this type form bases for various zonotopal $\Dcal$-spaces 
 if one chooses suitable pairs $(B,Z)$.
 The polynomial $R^B_Z$ is constructed as a convolution of differences of multivariate splines. 

\smallskip
 Let $Z\subseteq U$ be a finite list and let $B=(b_1,\ldots, b_r) \subseteq Z$ be a basis. 
 It is important that the basis is ordered and that this order is
  the order obtained by restricting
 the order on $Z$ to $B$. For $i\in\{ 0,\ldots, r\}$, we define $S_i = S_i^B := \spa \{ b_1,\ldots, b_i \}$. 
Hence,
 \begin{align}
 \label{eq:flagofsubspaces}
   \{ 0 \} = S_0^B  \subsetneq S_1^B \subsetneq S_2^B \subsetneq \ldots \subsetneq S_r^B = U \cong \R^r
 \end{align}
is a flag of subspaces.
 We define an orientation on each of the spaces $S_i$ by saying that $( b_1, \ldots, b_i )$ is a positive basis 
 for $S_i$. 
 Now a basis $D=(d_1,\ldots, d_i)$ for $S_i$ is called positive if the map that 
 sends $b_\nu$ to $d_\nu$ for $1\le \nu\le i$ has positive determinant.  
 Let $u\in S_{i}\setminus S_{i-1}$. If
 $(b_{1},\ldots, b_{i-1},u)$ is a positive basis, we call $u$ \emph{positive}. Otherwise, we call $u$ \emph{negative}.  
 Equivalently, $u\in S_{i}\setminus S_{i-1}$ is positive if and only if the $b_i$ coefficient is positive when we express $u$
 as a linear combination of $B$.

We partition $Z \cap (S_{i} \setminus S_{i-1})$ as follows:
 \begin{align}
   P_{i}^B &:= \{  u \in Z \cap (S_{i} \setminus S_{i-1}) : u \text{ positive}\} \\  
 \text{and }   N_{i}^B &:= \{ u \in Z \cap (S_{i} \setminus S_{i-1}) : u \text{ negative} \}.
\intertext{We define}
  T_{i}^{B+}  &:= (-1)^{\abs{N_{i}}} \cdot T_{P_i} * T_{-N_i} 
 \text{ and } T_{i}^{B-}:= (-1)^{\abs{P_i}} \cdot T_{-P_i} * T_{N_i}.
 \end{align}
Note that  $T_{i}^{B+}$ is supported in  $\cone(P_i,-N_i)$ and 
 that 
 \begin{align}
 T_{i}^{B-}(x) = (-1)^{\abs{P_i\cup N_i}} T_i^{B+}(- x). 
 \end{align}
Now define 
\begin{align}
   R_{i}^{B}  := T_i^{B+} -  T_i^{B-} % 
 \qquad \text{ and } \qquad
 R^B_Z = R^B := R_1^B * \cdots * R_r^B.
\label{eq:BasisElementConvolution}
\end{align}
For an example of this construction see Example~\ref{Example:PandDbases} and 
 Figure~\ref{Figure:ConvolutionCones}.
In Corollary~\ref{Corollary:HomPoly},
 we will see that the distribution $R^B_Z$ can be identified with a homogeneous polynomial.

\begin{figure}[tbp]
\begin{center}
\input{ConvolutionCones.pspdftex}
\end{center}
\caption{The geometry of the construction of the polynomial $R^{(x_1,x_3)}_X$ 
 in Example~\ref{Example:PandDbases}.
 Note that $N_2=\emptyset$.
\\
 This image was created by the author in 2012.
It is made available under the Creative Commons Attribution 4.0 International License.
To view a copy of this license, visit \url{http://creativecommons.org/licenses/by/4.0/}.
 }
\label{Figure:ConvolutionCones}
\end{figure}

\begin{Remark}
A similar construction of certain quasi-polynomials in the discrete case is done in \cite[Section 3]{deconcini-procesi-vergne-2010b}
 (see also \cite[Section 13.6]{concini-procesi-book}).
 The part of Theorem~\ref{Theorem:MainTheoremCentral} 
 that exhibits a basis for $\Dcal(X)$  
 can be seen as a special case of  Theorem~3.22 in \cite{deconcini-procesi-vergne-2010b}.
\end{Remark}

\begin{Remark}
 The construction of the polynomials $R_Z^B$ may at first seem rather complicated in comparison
 with construction of the polynomials $Q_B$ that form bases of the $\Pcal$-spaces.

 Here are a few remarks to explain this construction: 
multivariate splines are very convenient because it is so easy to calculate their partial derivatives 
(cf.~\eqref{eq:SplineDiff}).
 Taking differences of two splines in the definition of $R_i^B$ ensures that $R^B_Z$ is a polynomial 
 and not just piecewise 
 polynomial.
 In fact, $R_1^B * \ldots * R_i^B$ is a ``polynomial supported in $S_i$'' for all $i$.

 We have to change the sign of some of the vectors
 before constructing the multivariate spline  $T_{i}^{B+}$ to  ensure that 
 all the convolutions are well-defined.
 For example,  the convolutions
 in \eqref{eq:BasisElementConvolution} are well-defined for the following reason:
 the support of $R_1^B*\cdots *R_i^B$ is contained in $S_i$.
 The support of $R_{i+1}^B$ is $\cone(P_{i+1},-N_{i+1}) \cup \cone(-P_{i+1},N_{i+1})$.
For every compact set $K$, the set
\begin{align}
S_i \cap ( K- (\cone(P_{i+1},-N_{i+1}) \cup \cone(P_{i+1},-N_{i+1})) )
\end{align}
 is compact.
\end{Remark}

\begin{Proposition}
\label{Proposition:BasisElementIsLocalPiece}
 The distribution $R^B_Z$ is a local piece of the multivariate spline $T_1^{B+} * \cdots * T_r^{B+}$. % 
\end{Proposition}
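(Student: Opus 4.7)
The plan is to expand the convolution defining $R^B_Z$ into $2^r$ summands and to identify an open region $\Omega \subseteq U$ on which only one of them contributes, namely the desired spline. By multilinearity of convolution,
\begin{align}
R^B_Z \;=\; \sum_{\epsilon\in\{+,-\}^r}(-1)^{\#\{i\,:\,\epsilon_i=-\}}\;T_1^{B\epsilon_1}*T_2^{B\epsilon_2}*\cdots*T_r^{B\epsilon_r},
\end{align}
where each $T_i^{B+}$ is supported in $\cone(P_i^B,-N_i^B)\subseteq S_i$ and each $T_i^{B-}$ in its antipodal cone $-\cone(P_i^B,-N_i^B)\subseteq S_i$. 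The support of each summand thus lies in the Minkowski sum of $r$ such half-cones.

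To locate $\Omega$, I would introduce the basis $(\phi_1,\ldots,\phi_r)$ of $V$ dual to $(b_1,\ldots,b_r)$, so that $\phi_i$ vanishes on $S_{i-1}$ and sends $b_i$ to $1$. Since the definition of positive/negative vectors matches the sign of $\phi_i$, the functional $\phi_i$ is non-negative on $\cone(P_i^B,-N_i^B)$ and non-positive on its antipode. Pick $x^\ast=\sum_{i=1}^r\lambda_i b_i$ with $\lambda_1\gg\lambda_2\gg\cdots\gg\lambda_r>0$ and take $\Omega$ to be a small open ball around $x^\ast$. I claim that for $x\in\Omega$, the only summand whose support contains $x$ is the all-plus one. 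Indeed, writing $x=y_1+\cdots+y_r$ with $y_i\in\supp(T_i^{B\epsilon_i})\subseteq S_i$, and noting $y_j\in S_{r-1}$ for $j<r$, one gets $\phi_r(x)=\phi_r(y_r)$, which has sign $\epsilon_r$; since $\phi_r(x)\approx\lambda_r>0$, we must have $\epsilon_r=+$. Moreover $y_r$ lies in the bounded polytope $\cone(P_r^B,-N_r^B)\cap\{\phi_r=\phi_r(x)\}$, whose projection to $S_{r-1}$ has norm $O(\lambda_r)$. Thus $\phi_{r-1}(x-y_r)\approx\lambda_{r-1}$, which is strictly positive because $\lambda_{r-1}\gg\lambda_r$, and the same argument restricted to $S_{r-1}$ forces $\epsilon_{r-1}=+$. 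Iterating downward yields $\epsilon_i=+$ for all $i$.

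It follows that $R^B_Z=T_1^{B+}*\cdots*T_r^{B+}$ on $\Omega$. By Theorem~\ref{Theorem:SplineLocallyPolynomial}, this multivariate spline is piecewise polynomial; after shrinking $\Omega$ if necessary to lie inside a single cone of the polynomial decomposition, $R^B_Z$ coincides on $\Omega$ with a single homogeneous polynomial piece of the spline, which is precisely the conclusion. The main obstacle is the scaling argument; the hierarchy $\lambda_1\gg\cdots\gg\lambda_r$ is essential because the $S_{r-1}$-projection of $y_r$ can be as large as $O(\lambda_r)$ and must be dominated by $\lambda_{r-1}$ for the inductive elimination of the minus signs to go through.
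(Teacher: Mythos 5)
Your argument is correct and arrives at the same conclusion as the paper, but by a genuinely different mechanism. Both proofs expand $R^B_Z$ into $2^r$ signed convolutions and both evaluate near a point with hierarchically decaying coordinates in the basis $(b_1,\ldots,b_r)$ (the paper uses $\tau = \sum_i c^{-(i-1)}b_i$, which is your $x^\ast$ up to rescaling). The difference is how the $2^r-1$ unwanted summands are killed. The paper, for each nonempty $J\subseteq[r]$, builds a single linear functional $\phi_\alpha = \sum_{j\geq j^\ast}(-1)^{\chi_J(j)}\alpha^j\lambda_j$ and invokes Farkas' Lemma, which forces a two-parameter asymptotic (first $\alpha\to\infty$, then $c\to\infty$). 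You instead exploit the Minkowski-sum structure of the support directly: writing $x = y_1+\cdots+y_r$ with $y_i\in\supp T_i^{B\epsilon_i}\subseteq S_i$, you use the dual basis $\phi_i$ to peel off the signs one at a time from $i=r$ downward, with the key geometric input being that $\cone(P_i,-N_i)$ is a pointed cone on which $\phi_i>0$, so the slice at height $\phi_i(x)\approx\lambda_i$ has diameter $O(\lambda_i)$ and hence its contribution to lower coordinates is dominated by $\lambda_{i-1}$. This avoids Farkas entirely and uses only the single scaling hierarchy in the test point, which is arguably cleaner. One minor addendum worth making explicit: the pointedness of $\cone(P_i^B,-N_i^B)$ (needed for the $O(\lambda_i)$ bound on $y_i$) follows from $\phi_i$ being strictly positive on $(P_i,-N_i)$, which in turn relies on $b_i\in P_i$ so the relevant set is nonempty; both facts are immediate from the definitions but should be stated.
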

\begin{proof}
Let $c\gg 0$ and let
\begin{align}
\label{equation:VectorInTheBigCell}
  \tau :=  b_1 + \frac 1c b_2 + \ldots + \frac 1{c^{r-2}} b_{r-1} + \frac 1{c^{r-1}} b_r. 
\end{align} 
See Figure~\ref{Figure:ConeConstruction} for an example of this construction.
The vector $\tau$ is contained in $\cone(Z)$. 
By Theorem~\ref{Theorem:SplineLocallyPolynomial}, 
there exists a subcone of $\cone(Z)$ that contains $\tau$ \st
$T_Z$ agrees with a polynomial $p_{\tau,Z}$ on this subcone.
 We claim that $R^B_Z$ is equal to $p_{\tau,Z}$. 
Note that
\begin{align*}
 R^B_Z  &= (T_1^{B+} - T_1^{B-}) * \cdots * (T_r^{B+} - T_r^{B-}) 
    = \sum_{J \subseteq [r]} (-1)^{\abs J + \sum_{i \not\in J}\abs{N_i} + \sum_{i \in J}\abs{P_i} } T_{Z^J_B},
\intertext{where}
 Z^J_B &=  % 
   \bigcup_{i\not \in J} (P_i,-N_i) % 
   \cup \bigcup_{i \in J} (-P_i,N_i). 
\end{align*}
In order to prove our claim, it is sufficient to show that
 $\tau$ is contained in $\cone(Z^J_B)$ if and only if $J=\emptyset$.
The ``if'' part is clear.

\begin{figure}[tbp]
\begin{center}
\input{ConeConstruction3.pspdftex}
\end{center}
\caption{The setup in the proof of Proposition~\ref{Proposition:BasisElementIsLocalPiece}. Here, $J=\emptyset$.
\\
 This image was created by the author in 2012.
It is made available under the Creative Commons Attribution 4.0 International License.
To view a copy of this license, visit \url{http://creativecommons.org/licenses/by/4.0/}.
}
\label{Figure:ConeConstruction}
\end{figure}

Let $J$ be non-empty and let  $j^*$ be the minimal element. 
For $\alpha\in \R$ let $\phi_\alpha : U \to \R$ be the linear form that maps a vector $x$ to
\begin{align}
  \sum_{j=j^*}^r (-1)^{\chi_J(j)} \alpha^{j} \lambda_j(x),
\end{align}
where $\lambda_j(x)$  denotes the coefficient of $b_j$ when $x$ is written in the basis
 $(b_1,\ldots, b_r)$.
We claim that for sufficiently large $\alpha$,  
$\phi_\alpha$ is non-negative on
$Z^J_B$ and $\phi_\alpha(\tau)<0$. By Farkas' Lemma (\eg \cite[Section 5.5]{schrijver-co-volA}), this proves 
 that $\tau$ is not contained in  $\cone(Z^J_B)$.

If $x\in S_i \cap Z_B^J$ for $i< j^*$, then obviously $\phi_\alpha(x)=0$.  
If $x\in (S_i\setminus S_{i-1}) \cap Z_B^J $ for $i\ge j^*$, then 
 $\lambda_i(x)\neq 0$ and $\lambda_\nu(x)=0$ for all $\nu\ge i+1$.
In addition, $(-1)^{\chi_J(i)}\lambda_i(x)>0$, since all vectors in $(P_i,-N_i)$ have a positive $b_i$ component when written
 in the basis $(b_1,\ldots, b_r)$ and all vectors in $(-P_i,N_i)$ have a negative $b_i$ component.
Hence,
 $\phi_\alpha(x) = (-1)^{\chi_J(i)} \alpha^{i} \lambda_i(x) 
  + o(\alpha^{ i } ) = \alpha^{i} \abs{\lambda_i(x)} + o(\alpha^{i} )$.
 This is positive for sufficiently large $\alpha$. 
 
 Since $(-1)^{\chi_J(j^*)}=-1$  we obtain
\begin{align}
\phi_{\alpha}(\tau)= -\frac{ \alpha^{j^*} } {c^{j^*-1} } \pm \frac{\alpha^{j^*+1}}{c^{j^*}} \pm \ldots 
  = -\frac{\alpha^{j^*}}{c^{j^*-1}} + o\left(\frac{1}{c^{j^*-1}}\right).
\end{align}
This is negative for sufficiently large $c$. Note that we fix a large $\alpha$ first and then we let $c$ grow.
\end{proof}
 Note that the distribution $R^B_Z$ does not change if we add or remove zero vectors from the list $Z$.
 Using Theorem~\ref{Theorem:SplineLocallyPolynomial}, we can deduce 
 from Proposition~\ref{Proposition:BasisElementIsLocalPiece}
 the following corollary.
\begin{Corollary}
\label{Corollary:HomPoly}
 Let $\tilde Z$ be the list of vectors obtained from $Z$ by removing all copies of the zero vector.
 The distribution $R^B_Z=R^B_{\tilde Z}$ can be identified with a homogeneous polynomial of degree $|\tilde Z|-r$.
\end{Corollary}

\begin{Remark}
The local pieces of the multivariate spline are uniquely determined by a certain equation
 (cf.~{\cite[Theorems 9.5 and 9.7]{concini-procesi-book}}). Taking into account
 Proposition~\ref{Proposition:BasisElementIsLocalPiece}, this 
 gives us a different method to 
 calculate the polynomials $R^B_Z$.
\end{Remark}

The following theorem that is due to Zhiqiang Xu 
 yields another formula for the polynomials $R_Z^B$.
 It is a variant of Brion's formula
\cite{brion-1988}.

\begin{Theorem}[{\cite[Theorem 3.1.]{xu-2011}}]
\label{Theorem:XuSplineFormula}
\XintroCard
Let $c\in\R^X$ be a vector \st the hyperplane arrangement  $\Hcal(X,c)$
 is in general position.
     For a basis $B\in \BB(X)$, let $\theta_B \in V$ denote the vertex of $\Hcal(X,c)$  corresponding to $B$
(cf.~Subsection~\ref{Subsection:LeastMap}). 
Then
 \begin{align}
\label{eq:XuSplineFormula}
   T_X(u) &= \frac{1}{(N-r)!} \sum_{B\in\BB(X)} 
                \frac{ (-\theta_B u)^{N-r} }{ \abs{\det(B)} \prod_{x \in X \setminus B  }
            ( \theta_B x - c_x)} \chi_{\cone(B)}(u).
 \end{align}
\end{Theorem}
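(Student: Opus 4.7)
The plan is to verify the identity via Laplace transforms. Both sides are distributions supported in the closed convex cone $\cone(X)$, and on the open dual cone $\Omega := \{\theta \in V : \theta(x) > 0 \text{ for all } x \in X\}$ the Laplace transform is injective. By Definition~\ref{Definition:Splines} and Fubini,
$$\widehat{T_X}(\theta) = \int_U T_X(u) e^{-\theta(u)}\,\di u = \prod_{x \in X} \frac{1}{\theta(x)}.$$
So it suffices to show that the Laplace transform of the right-hand side of \eqref{eq:XuSplineFormula} equals the same product.

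First I would evaluate the contribution of each basis $B$. On $\cone(B)$ I substitute $u = Bz$ with $z \in \R^B_{\ge 0}$, introducing a Jacobian $\abs{\det(B)}$. Since $\theta_B$ agrees with $c$ on $B$, one has $\theta_B(Bz) = c_B \cdot z$ (where $c_B$ is the restriction of $c$ to $B$) and $\theta(Bz) = \sum_{b \in B} \theta(b) z_b$. The contribution of $B$ to the Laplace transform of the right-hand side then becomes
\begin{align*}
\frac{1}{(N-r)!\,\prod_{x \notin B}(\theta_B(x)-c_x)} \int_{\R^B_{\ge 0}} (-c_B\cdot z)^{N-r} \prod_{b \in B} e^{-\theta(b)z_b}\,\di z.
\end{align*}
Expanding $(-c_B \cdot z)^{N-r}$ by the multinomial theorem and integrating each factor via $\int_0^\infty z^k e^{-az}\,\di z = k!\,a^{-k-1}$ turns this into an explicit rational function $R_B(\theta)$ whose only poles are the hyperplanes $\{\theta(b) = 0\}$ for $b \in B$.

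The main obstacle is the resulting global partial-fraction identity
\begin{align*}
\sum_{B \in \BB(X)} \frac{R_B(\theta)}{\prod_{x \notin B}(\theta_B(x)-c_x)} \;=\; \prod_{x \in X} \frac{1}{\theta(x)}.
\end{align*}
The denominators $\theta_B(x) - c_x$ measure the signed distance of the vertex $\theta_B$ to the hyperplane $\{\theta(x) = c_x\}$, so this is essentially a Jeffrey--Kirwan / iterated residue decomposition adapted to the generic shift $c$. I would establish it by induction on $N-r$, using the convolution recursion $T_{(X,y)} = T_X * T_{(y)}$ from \eqref{eq:multivariateInductive}: taking the Laplace transform converts the convolution into multiplication by $\theta(y)^{-1}$, while on the geometric side the new vertices of $\Hcal((X,y),c)$ are exactly the intersections of the old hyperplanes with $\{\theta(y) = c_y\}$, which is what is needed to propagate the partial-fraction identity from $X$ to $(X,y)$.

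The principal difficulty will be the bookkeeping of how the vertex set and the product $\prod_{x \notin B}(\theta_B(x)-c_x)$ transform under this inductive step, since both depend globally on the generic vector $c$. An alternative route would be to verify the identity chamber by chamber: by Theorem~\ref{Theorem:SplineLocallyPolynomial}, $T_X$ restricts to a homogeneous polynomial of degree $N-r$ on each top-dimensional chamber of the conical decomposition of $\cone(X)$ induced by the $\cone(B)$. It would then suffice to check agreement on a single distinguished chamber (for instance the one containing the vector $\tau$ from \eqref{equation:VectorInTheBigCell}) and to propagate across adjacent walls using the distributional jump conditions of $T_X$, each wall crossing amounting to a deletion of a single vector and hence reducing to the inductive hypothesis on $N$.
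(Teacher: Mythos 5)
Since the paper cites this result from \cite{xu-2011} without reproducing a proof, there is no in-paper argument to compare against; I can only assess the soundness of your proposal. Your setup is correct: the Laplace transform identity $\widehat{T_X}(\theta) = \prod_{x\in X}\theta(x)^{-1}$ on the interior of the dual cone, the substitution $u = Bz$ with Jacobian $\abs{\det B}$, the observation $\theta_B(Bz) = c_B\cdot z$ since $\theta_B$ agrees with $c$ on $B$, and the injectivity of the Laplace transform for distributions of polynomial growth supported in a pointed cone are all legitimate ingredients, so the strategy is not misconceived.

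The genuine gap is the one you flag yourself: the ``global partial-fraction identity'' is the entire mathematical content of the theorem, and neither of your two proposed routes to it is actually carried through. The inductive route is more delicate than the sketch suggests, because passing from $X$ to $(X,y)$ changes \emph{every} term of the sum, not only the terms indexed by bases containing $y$: for each old $B\in\BB(X)$ the exponent $N-r$ rises by one (so your rational function $R_B$ changes) and the denominator gains the new factor $\theta_B(y)-c_y$. Your sketch focuses on the new vertices on $\{\theta(y)=c_y\}$, but the real work is the partial-fraction split of the factor $\theta(y)^{-1}$ against each old summand and the matching of the resulting pieces with both the modified old terms and the new ones; in effect a deformed Brion/Jeffrey--Kirwan residue computation that would need to be written out in full. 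The alternative chamber-by-chamber route has the analogous problem: the jump of $T_X$ across a wall removes one vector, but the index set $\BB(X)$, the vertices $\theta_B$, and the denominators all change globally, so the matching across walls is again nontrivial. Until one of these identities is actually established, what you have is a plausible proof strategy rather than a proof.
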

 Note that the numerator \eqref{eq:XuSplineFormula} is non-zero because 
 $c$ is chosen \st  $\Hcal(X,c)$ is in general position. 
Using Proposition~\ref{Proposition:BasisElementIsLocalPiece}, one can deduce the following corollary.

\begin{Corollary}
 Let $Z\subseteq U\cong\R^r$ be a list of $n$ vectors that spans $U$.
 Let $c$ and $\theta_B$ as in Theorem~\ref{Theorem:XuSplineFormula}.  
Then,
 the polynomial $R_Z^B(u)$ is given by
  \begin{align}
   R_Z^B (u)  &= \frac{1}{(n-r)!} \sum_{ \substack{B'\in\BB(Z^{B+})\\\tau \in \cone(B') } } 
                \frac{ (-\theta_{B'} u)^{n-r} }{ \abs{\det(B')} \prod_{x \in Z \setminus B'  }( \theta_{B'} x - c_x)},
 \end{align}
 where $\tau$ denotes the vector defined in \eqref{equation:VectorInTheBigCell} and $Z^{B+}$
 denotes the reorientation of the list $Z$ \st all vectors are positive with respect to $B$, \ie $Z^{B+}= \bigcup_{i=1}^r (P_i, -N_i)$.
\end{Corollary}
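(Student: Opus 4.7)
The plan is to apply Proposition~\ref{Proposition:BasisElementIsLocalPiece} together with Xu's formula (Theorem~\ref{Theorem:XuSplineFormula}).

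First, Proposition~\ref{Proposition:BasisElementIsLocalPiece} identifies $R_Z^B$ with the polynomial piece of the distribution $T_1^{B+} * \cdots * T_r^{B+}$ on the chamber of $\cone(Z^{B+})$ containing the vector $\tau$ from \eqref{equation:VectorInTheBigCell}. Unwinding the definition $T_i^{B+} = (-1)^{|N_i|} T_{P_i} * T_{-N_i}$ and using commutativity of convolution, the full convolution simplifies to
\begin{align*}
T_1^{B+} * \cdots * T_r^{B+} = (-1)^{\sum_i |N_i|}\, T_{Z^{B+}},
\end{align*}
where $Z^{B+} = \bigcup_{i=1}^r (P_i, -N_i)$ is precisely the $B$-reorientation of $Z$ appearing in the statement.

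Next, I would apply Xu's formula to $T_{Z^{B+}}$, using the correspondingly reoriented shift vector $c^{B+}$ whose component at a negated vector $-x$ equals $-c_x$. Since the hyperplane $H_{x,c_x}$ coincides with $H_{-x,-c_x}$, the arrangements $\Hcal(Z^{B+},c^{B+})$ and $\Hcal(Z,c)$ are identical, both are in general position simultaneously, and the vertex labels $\theta_{B'}$ agree. To isolate the local piece on the chamber containing $\tau$, I would use that near $\tau$ the indicator $\chi_{\cone(B')}$ takes the value $1$ precisely when $\tau \in \cone(B')$; only these bases contribute, which explains the restricted sum in the statement. The degree of the resulting polynomial matches the expected degree $n-r$ provided by Corollary~\ref{Corollary:HomPoly}.

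The main obstacle will be sign bookkeeping. The prefactor $(-1)^{\sum_i |N_i|}$ from the previous step must cancel against the signs produced when rewriting the denominator
\begin{align*}
\prod_{y \in Z^{B+} \setminus B'} \bigl(\theta_{B'} y - (c^{B+})_y\bigr)
\end{align*}
as the product $\prod_{x \in Z \setminus B'}(\theta_{B'} x - c_x)$ appearing in the statement. Each negated vector $y=-x$ contributes a factor $-(\theta_{B'} x - c_x)$, whereas $|\det(B')|$ is unchanged since it is taken in absolute value. The key parity check is that for every basis $B'$ contributing to the sum (i.e.\ with $\tau \in \cone(B')$), the number of elements of $\bigcup_i N_i$ lying outside $B'$ has the same parity as $\sum_i |N_i|$; equivalently, every such $B'$ must contain all negated vectors from exactly the right positions along the flag. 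Once this parity is verified, the two sign contributions cancel and the stated formula emerges directly from Xu's formula applied to $T_{Z^{B+}}$.
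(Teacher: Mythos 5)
Your plan — invoke Proposition~\ref{Proposition:BasisElementIsLocalPiece} to identify $R_Z^B$ with the local piece of $T_1^{B+}*\cdots*T_r^{B+}$ at $\tau$, simplify that convolution to $(-1)^{\sum_i|N_i|}\,T_{Z^{B+}}$, then plug into Xu's formula and restrict to the bases $B'$ whose cone contains $\tau$ — is exactly the route the paper indicates (it gives no details beyond "Using Proposition~\ref{Proposition:BasisElementIsLocalPiece}, one can deduce\ldots"), and all of those ingredients are correctly assembled.

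The gap is the sign bookkeeping that you yourself flag and then leave unfinished. Passing from the product $\prod_{y\in Z^{B+}\setminus B'}(\theta_{B'}y-(c^{B+})_y)$ to $\prod_{x\in Z\setminus B'}(\theta_{B'}x-c_x)$ introduces a factor $(-1)^{k(B')}$ where $k(B')$ is the number of reoriented (negative) vectors of $Z$ that lie \emph{outside} $B'$, and you need $(-1)^{\sum_i|N_i|-k(B')}=1$ for every contributing $B'$, i.e.\ the number of reoriented vectors appearing \emph{in} $B'$ must be even. You assert this and then rephrase it as ``every such $B'$ must contain all negated vectors from exactly the right positions along the flag'' — but that rephrasing is not equivalent to the parity statement, and the parity statement itself is false in general. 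Already in rank one with $Z=(1,-1)$ and $B=(1)$ one has $P_1=\{1\}$, $N_1=\{-1\}$, $Z^{B+}=(1,1)$, and the basis $B'$ consisting of the single reoriented vector has $\tau\in\cone(B')$ while containing exactly one (hence an odd number of) reoriented vectors. So the cancellation you are hoping for does not happen $B'$ by $B'$. The cleaner route is to apply Theorem~\ref{Theorem:XuSplineFormula} directly to the list $Z^{B+}$ with a fresh generic $c\in\R^{Z^{B+}}$ — the product in the corollary is then literally over $Z^{B+}\setminus B'$, no reorientation of $c$ is needed, and the only sign to account for is the global prefactor $(-1)^{\sum_i|N_i|}$; translating back to the original list $Z$ as you propose creates a $B'$-dependent sign that cannot cancel a global one.
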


\section{A basis for the Dahmen--Micchelli space $\Dcal(X)$}
\label{Section:MainCentral}

In this section we define a set $\Bcyr(X)$ 
 and we show that this set is a basis for the central $\Dcal$-space $\Dcal(X)$. Furthermore, we show that this basis
 is dual to the basis $\Bcal(X)$ of the central $\Pcal$-space $\Pcal(X)$.
 Note that $\Bcyr$ is the equivalent of the letter $B$ in the Cyrillic alphabet. 

\begin{Definition}[Basis for $\Dcal(X)$]
\Xintro
Recall that $\BB(X)$ denotes the set of bases that can be selected from $X$
 and that $E(B)$ denotes the set of externally active elements with respect to a basis $B$.
We define
\begin{align}
 \Bcyr(X) :=  \{ \abs{\det(B)}  R^B_{X\setminus E(B)} : B\in \BB(X)   \}.
\end{align}
\end{Definition}

\begin{Theorem}
\label{Theorem:MainTheoremCentral}
\Xintro
 Then
$\Bcyr(X)$ % 
is a basis for the central Dahmen--Micchelli space $\Dcal(X)$ and this 
 basis is dual to the basis $\Bcal(X)$ for the central $\Pcal$-space  $\Pcal(X)$. % 
\end{Theorem}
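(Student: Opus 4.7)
The strategy is to establish two claims: (a) every element of $\Bcyr(X)$ lies in $\Dcal(X)$, and (b) the pairing matrix
\[ M_{B,B'} := \pair{Q_{B'}}{\det(B) R^B_{X \setminus E(B)}} \]
is (up to a sign coming from the orientation of $B$) the identity. Combined with $|\Bcyr(X)| = |\BB(X)| = \dim \Dcal(X)$ from Proposition~\ref{Proposition:CentralTutteEval} and the nondegeneracy of the pairing on $\Pcal(X)\times\Dcal(X)$ in Proposition~\ref{Proposition:PDduality}, these claims force $\Bcyr(X)$ to be a basis of $\Dcal(X)$ dual to $\Bcal(X)$.

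For (a), I would show $p_T(\partial/\partial t)\, R^B_{X\setminus E(B)} = 0$ for every cocircuit $T$ of $(X, \BB(X))$. The key observation is Proposition~\ref{Proposition:BasisElementIsLocalPiece}: setting $Z := X\setminus E(B)$, the polynomial $R^B_Z$ agrees, on the interior of the cell containing $\tau$, with the spline $T_1^{B+}*\cdots*T_r^{B+}$. Classical polynomial differentiation coincides with distributional differentiation on such an open set, so it suffices to verify that $p_T(\partial/\partial t)$ annihilates this spline near $\tau$. Using \eqref{eq:SplineDiff} together with $D_x T_{-x} = -\delta_0$, I would analyze the action of $p_T(\partial/\partial t)$ according to how $T$ meets the layers $P_i \cup N_i$, the basis $B$, and $E(B)$, invoking the minimality of cocircuits to force at least one factor to annihilate the spline.

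For (b), homogeneity of $Q_{B'}$ and $R^B_{X\setminus E(B)}$ forces $M_{B,B'} = 0$ unless $|E(B)| = |E(B')|$, in which case $Q_{B'}(\partial/\partial t) R^B_{X\setminus E(B)}$ is a constant polynomial whose value gives the pairing. In the diagonal case $B = B'$, the operator $Q_B = p_{Z\setminus B}$ differentiates by every non-basis vector of $Z$; applying \eqref{eq:SplineDiff} repeatedly to $T_1^{B+}*\cdots*T_r^{B+}$ collapses the convolution onto $T_B = \chi_{\cone(B)}/|\det(B)|$, with the signs from the reorientation of the $N_i$'s cancelling precisely those in the definitions of $T_i^{B\pm}$. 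Evaluating at the point $\tau\in\mathrm{int}(\cone(B))$ (permissible because $\tau$ lies in the cell on which the polynomial equals the spline) yields $M_{B,B} = \det(B)/|\det(B)|$. For the off-diagonal case $B \neq B'$ with $|E(B)| = |E(B')|$, I would order the bases by activity pattern and then lexicographically and verify triangularity of $M$ with respect to that order, exploiting the combinatorics of external activity.

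The hardest step will be the off-diagonal vanishing in (b), together with the cocircuit argument in (a): both demand careful tracking of how $E(B)$ interacts with the layer decomposition $P_i \cup N_i$ and with cocircuits. In particular, the subtle point in the off-diagonal case is that $Q_{B'}$ may contain factors $p_x$ with $x \in E(B)$, corresponding to vectors absent from the reoriented spline $T_{Z^{B+}}$ underlying $R^B_{X\setminus E(B)}$, so the clean reduction to spline differentiation used on the diagonal does not apply directly and a more subtle matroid-theoretic argument is needed.
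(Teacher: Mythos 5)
Your overall strategy — show $\Bcyr(X)\subseteq\Dcal(X)$, then compute the pairing matrix against $\Bcal(X)$ and invoke the known duality of $\Pcal(X)$ and $\Dcal(X)$ — matches the paper's. The diagonal computation of $\pair{Q_B}{R^B_{X\setminus E(B)}}=1/\det(B)$ you describe is also essentially the paper's. However, there are two gaps.

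First, for the inclusion $\Bcyr(X)\subseteq\Dcal(X)$ you gesture at ``how $T$ meets the layers $P_i\cup N_i$'' and ``minimality of cocircuits,'' but the decisive observation in the paper is concrete and geometric: any cocircuit $C$ of $X\setminus E(B)$ is of the form $(X\setminus E(B))\setminus H$ for a hyperplane $H$, and choosing $i$ minimal with $S_i^B\not\subseteq H$ forces the \emph{entire} $i$-th layer $(X\setminus E(B))\cap(S_i^B\setminus S_{i-1}^B)$ into $C$. This is what makes $D_C R_i^B=\delta_0-\delta_0=0$, hence $D_C R^B=0$, as a global distributional identity. Also beware that your reduction ``it suffices to verify $p_T(\partial)$ annihilates the spline near $\tau$'' is not quite right as stated: $D_C T_{Z^{B+}}=T_{Z^{B+}\setminus C}$ is nonzero; what vanishes is $D_C R^B_Z$, precisely because it is a difference of splines, and the paper works directly with that difference rather than with a single spline restricted to the cell of $\tau$.

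Second, and more seriously, for distinct $B\neq D$ with $|E(B)|=|E(D)|$ you propose only a triangularity argument with respect to an order. This is insufficient: triangularity with nonzero diagonal proves that $\Bcyr(X)$ is a basis, but it does not prove the asserted \emph{duality} with $\Bcal(X)$; for that you need all off-diagonal entries to vanish. The paper establishes exactly this via a separate combinatorial lemma (Lemma~\ref{Lemma:DifferentBasesSameNumberOfEAelements}): for distinct $B,D$ with $|E(B)|=|E(D)|$, there is always an index $i$ with $(X\setminus E(D))\cap(S_i^D\setminus S_{i-1}^D)\subseteq X\setminus(B\cup E(B))$, so the annihilation criterion applies and $\pair{Q_B}{R^D_{X\setminus E(D)}}=0$ outright. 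This lemma — proved by assuming otherwise, picking $z_i$ in each layer of $D$ inside $(B\cup E(B))\setminus E(D)$, showing they form a basis with $E(D)\subseteq E(z_1,\ldots,z_r)\subseteq E(B)$, and deriving a contradiction — is the genuinely matroid-theoretic step your proposal flags as hard but does not supply. Without it the off-diagonal vanishing (and hence the duality statement, not just linear independence) remains unproved.
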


\begin{Remark}
 $\Dcal(X)$ and $\Pcal(X)$ are independent of the order of the elements of  $X$.
 The bases $\Bcal(X)$ and $\Bcyr(X)$ both depend on that order. 
 In Theorem~\ref{Theorem:MainTheoremCentral}, we assume that both bases are constructed 
 using the same order.
\end{Remark}

\begin{Example}
\label{Example:PandDbases}
 This is a continuation of Example~\ref{Example:MatrixBases}.
 See also Figure~\ref{Figure:ConvolutionCones}.
The elements of $\Bcyr(X)$ are
 \begin{align}
   R_{(x_1,x_2)}^{(x_1,x_2)} &= 1, \\
   R_{X}^{(x_1,x_3)} &= (T_{x_1} - T_{-x_1}) * ( T_{(x_2,x_3)} - T_{(-x_2,-x_3)} ) = s_2, \\
 \text{and }  R_{X}^{(x_2,x_3)} &= (T_{x_2} - T_{-x_2}) * ( T_{(x_1,x_3)} - T_{(-x_1,-x_3)} ) = s_1.
 \end{align}

The elements of $\Bcal(X)$ are
\begin{align}
   Q_{(x_1,x_2)} &= p_\emptyset = 1, \\
   Q_{(x_1,x_3)} &= p_{x_2} = t_2, \\
 \text{and }  Q_{(x_2,x_3)} &= p_{x_1} = t_1.
\end{align}
$\Bcal(X)$ and $\Bcyr(X)$ are obviously dual bases.
\end{Example}
The proof of Theorem~\ref{Theorem:MainTheoremCentral} is split into four lemmas.
Recall that for a basis $B=(b_1,\ldots, b_r)$  we defined
 a flag of subspaces $\{0\}= S_0^B \subsetneq S_1^B\subsetneq \ldots \subsetneq S_r^B = U\cong \R^r$, where
 $S_i^B:=\spa(b_1,\ldots, b_i)$.

\begin{Lemma}[Annihilation criterion]
\label{Lemma:AnnihilationCondition}
 Let $Z\subseteq U\cong \R^r$ be a finite list of vectors and let $B\subseteq Z$ be a basis.
Let $R^B_Z$ be the polynomial that is defined in \eqref{eq:BasisElementConvolution}.
 Let $C \subseteq Z$. Suppose  there exists $i\in [r]$ 
 \st $Z \cap ( S_{i}^B\setminus S_{i-1}^B) \subseteq C$.

 Then, $D_C R^B_Z = 0 $. 
\end{Lemma}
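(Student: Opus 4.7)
The plan is to show that the partial differential operator $D_{P_i^B \cup N_i^B}$ already annihilates the single factor $R_i^B$ appearing in the convolutional product \eqref{eq:BasisElementConvolution}, and then to invoke the commutativity of convolution and differentiation \eqref{eq:ConvDiffCommute} to route these derivatives onto that factor.

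First observe that $P_i^B$ and $N_i^B$ partition $Z \cap (S_i^B \setminus S_{i-1}^B)$, so the hypothesis of the lemma is equivalent to $P_i^B \cup N_i^B \subseteq C$. Writing $C = (P_i^B \cup N_i^B) \cup C'$, we decompose $D_C = D_{P_i^B \cup N_i^B} \circ D_{C'}$, and it suffices to show that the first factor already kills $R_i^B$.

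The key computation is that $D_{P_i^B \cup N_i^B}\, R_i^B = 0$. Recall $R_i^B = T_i^{B+} - T_i^{B-}$ with $T_i^{B+} = (-1)^{\abs{N_i^B}} T_{P_i^B} * T_{-N_i^B}$. Using \eqref{eq:ConvDiffCommute} together with \eqref{eq:SplineDiff}, I obtain $D_{P_i^B} T_{P_i^B} = T_\emptyset = \delta_0$. For the second convolutional factor, the identity $D_{-v} = -D_v$ for directional derivatives gives $D_{N_i^B} T_{-N_i^B} = (-1)^{\abs{N_i^B}} D_{-N_i^B} T_{-N_i^B} = (-1)^{\abs{N_i^B}} \delta_0$. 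Combined with the prefactor $(-1)^{\abs{N_i^B}}$ in the definition of $T_i^{B+}$, this yields $D_{P_i^B \cup N_i^B} T_i^{B+} = \delta_0$. The symmetric calculation applied to $T_i^{B-} = (-1)^{\abs{P_i^B}} T_{-P_i^B} * T_{N_i^B}$ also produces $\delta_0$, so the two terms cancel and $D_{P_i^B \cup N_i^B} R_i^B = 0$.

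Finally, \eqref{eq:ConvDiffCommute} allows us to push $D_{P_i^B \cup N_i^B}$ onto the $i$-th factor of $R_1^B * \cdots * R_r^B$, producing a convolution with a zero factor and therefore zero; applying the remaining operator $D_{C'}$ to zero still gives zero, so $D_C R_Z^B = 0$. The only subtle point is the sign bookkeeping leading to $D_{N_i^B} T_{-N_i^B} = (-1)^{\abs{N_i^B}} \delta_0$ (and its analogue for $T_{-P_i^B}$); the prefactors $(-1)^{\abs{N_i^B}}$ and $(-1)^{\abs{P_i^B}}$ in the definitions of $T_i^{B\pm}$ are designed precisely so that both summands reduce to the same delta distribution and cancel in the difference.
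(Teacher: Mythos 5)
Your proof is correct and follows essentially the same approach as the paper: identify that $P_i^B \cup N_i^B = Z\cap(S_i^B\setminus S_{i-1}^B)\subseteq C$, verify via \eqref{eq:SplineDiff} and $D_a T_{-a}=-\delta_0$ that $D_{P_i^B\cup N_i^B}$ sends both $T_i^{B+}$ and $T_i^{B-}$ to $\delta_0$ so that $R_i^B$ is annihilated, and then use \eqref{eq:ConvDiffCommute} to push the operator onto the $i$-th convolution factor. Your write-up is in fact a little more explicit than the paper's about the sign bookkeeping that makes the two summands cancel.
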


\begin{proof}
Note that $D_a T_{-a} = -\delta_0$. Using 
\eqref{eq:SplineDiff}, we obtain
 \begin{align}
 D_C R_i^B &= D_C ((-1)^{\abs{N_i}}T_{P_i} * T_{-N_i} - (-1)^{\abs{P_i}}T_{-P_i} * T_{N_i})
 \\&= 
     D_{C\setminus (S_i\setminus S_{i-1})}(   \delta_0  - \delta_0) = 0 .
\intertext{This implies}
 D_C R^B_Z &= D_{C\setminus (S_i\setminus S_{i-1})} R_1^B * \cdots * R_{i-1}^B * 0 * R_{i+1}^B * \cdots *R_r^B = 0.
 \notag \qedhere
\end{align}
\end{proof}

\begin{Lemma}[Inclusion]
\label{Lemma:ContainmentCentral}
 The polynomial  $R_{X\setminus E(B)}^B$ is contained  in $\Dcal(X\setminus E(B))$ for all $B\in \BB(X)$.
 Since $\Dcal(X\setminus E(B)) \subseteq \Dcal(X)$,
 this implies
 \begin{align}
   \Bcyr(X)\subseteq \Dcal(X).
 \end{align}
\end{Lemma}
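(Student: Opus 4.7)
The plan is to prove the stronger statement $R^B_Z \in \Dcal(Z)$ for $Z := X\setminus E(B)$, and then to deduce $R^B_Z \in \Dcal(X)$ from a monotonicity property of $\Dcal$. Since $E(B) \subseteq X\setminus B$, the basis $B$ sits inside $Z$, and the order on $B$ inherited from $Z$ agrees with the one inherited from $X$, so the flag $S_0^B \subsetneq \cdots \subsetneq S_r^B$ of \eqref{eq:flagofsubspaces} makes sense for both. Because $\Jcal(Z)$ is generated by $\{p_T : T \text{ cocircuit of } Z\}$, it is enough to verify $D_T R^B_Z = 0$ for each such $T$, and Lemma~\ref{Lemma:AnnihilationCondition} reduces this further to exhibiting an index $i \in [r]$ with $Z \cap (S_i^B \setminus S_{i-1}^B) \subseteq T$.

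For this key step, I use that a cocircuit $T$ of $Z$ is the complement $Z\setminus F$ of a matroid hyperplane $F\subseteq Z$; since the matroid is realised in $U$, there is a unique linear hyperplane $H\subseteq U$ with $F = Z\cap H$, so $T = Z\setminus H$. As $B$ spans $U$ it cannot lie entirely in $H$, so I let $i$ be the smallest index with $b_i\notin H$. Then $b_1,\ldots,b_{i-1}\in H$ and therefore $S_{i-1}^B\subseteq H$; meanwhile, any $u\in S_i^B\setminus S_{i-1}^B$ has a non-zero coefficient of $b_i$ when written in $(b_1,\ldots,b_i)$, so $u\in H$ would force $b_i\in H + S_{i-1}^B = H$, a contradiction. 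Hence $Z\cap(S_i^B\setminus S_{i-1}^B) \subseteq Z\setminus H = T$, and Lemma~\ref{Lemma:AnnihilationCondition} yields $D_T R^B_Z = 0$ as required. This flag/hyperplane check is really the only substantive step.

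To conclude $\Bcyr(X)\subseteq \Dcal(X)$ from $R^B_Z \in \Dcal(Z)$, I verify the monotonicity $\Dcal(Z)\subseteq \Dcal(X)$, equivalent to $\Jcal(X)\subseteq \Jcal(Z)$. If $T$ is a cocircuit of $X$, then since $Z$ spans $U$ one has $\BB(Z)\subseteq \BB(X)$, so $T$ meets every basis of $Z$, hence $T\cap Z$ contains some cocircuit $T'$ of $Z$; writing $p_T = p_{T'}\cdot \prod_{x\in T\setminus T'} p_x$ shows $p_T\in \Jcal(Z)$. The main obstacle is the matroid-hyperplane / linear-hyperplane bookkeeping in the middle paragraph; after that, everything follows from Lemma~\ref{Lemma:AnnihilationCondition} and standard ideal-kernel monotonicity.
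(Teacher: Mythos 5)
Your proof is correct and follows essentially the same route as the paper's: write the cocircuit as $Z\setminus H$ for a linear hyperplane $H$, take $i$ minimal with $S_i^B\not\subseteq H$, deduce $(S_i^B\setminus S_{i-1}^B)\cap H=\emptyset$, and invoke Lemma~\ref{Lemma:AnnihilationCondition}. The only difference is that you spell out a couple of steps the paper leaves implicit — the flat-to-hyperplane identification $F=Z\cap H$ and the inclusion $\Jcal(X)\subseteq\Jcal(Z)$ giving $\Dcal(Z)\subseteq\Dcal(X)$ — but these are cosmetic, not a different method.
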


\begin{proof}
% 
% 
% 
% 
% 
% Let $\det(B) R^B_{X\setminus E(B)} \in \Bcyr(X)$ and 
Let $B\in \BB(X)$ and
let $C\subseteq X \setminus E(B)$ be a cocircuit, \ie 
 $C$ intersects all bases that can be selected from $X\setminus E(B)$.
  We need to show that $D_{C} R^B_{X\setminus E(B)} = 0$.
 $C$ can be written as $C=X\setminus (H \cup E(B))$  for some hyperplane $H\subseteq U$.

 Let $i$ be minimal \st $S_i\not\subseteq H$. Such an $i$ must exist since $ S_r=U $.
 Even $(S_i \setminus S_{i-1}) \cap H = \emptyset$ holds.
 This implies 
\begin{align}
  (X\setminus E(B)) \cap (S_i\setminus S_{i-1})\subseteq X\setminus (H \cup E(B))  = C.
 \end{align}
  By Lemma~\ref{Lemma:AnnihilationCondition}, this implies $D_C R^B_{X\setminus E(B)}=0$.
\end{proof}
 The following % 
 lemma will be used only in the proof of  
 Lemma~\ref{Lemma:BaseDualityCentral}.
\begin{Lemma}
 \label{Lemma:DifferentBasesSameNumberOfEAelements}
 Let $B, D \in \BB(X)$. % 
  Suppose that both bases are distinct but have the same number of externally active elements.

   Then there exists $i\in [r]$ % 
 \st  
\begin{align}
\label{equation:TechnicalLemmaCentral}
( X \setminus  E(D) ) \cap   (S_i^{D} \setminus S_{i-1}^{D})
   \subseteq X\setminus (B\cup E(B)).
\end{align}
\end{Lemma}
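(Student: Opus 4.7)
The plan is to prove~\eqref{equation:TechnicalLemmaCentral} by an explicit choice of $i$. Set $d^{*} := \min(D \setminus B)$, which exists since $B \neq D$, and let $i^{*} \in [r]$ be the index of $d^{*}$ in the ordered basis $D$, so $d^{*} = d_{i^{*}}$. I claim that this $i^{*}$ satisfies~\eqref{equation:TechnicalLemmaCentral}.

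Two preliminary observations streamline the argument. First, for any $x \in X \setminus D$ lying in $S_{i}^{D} \setminus S_{i-1}^{D}$, the element $x$ is externally active in $D$ if and only if $x > d_{i}$: indeed, $x$ involves $d_{i}$ non-trivially in its $D$-expansion, so the condition $x \in \spa\{d \in D : d < x\}$ is equivalent to $d_{i} < x$. Hence
\begin{align*}
(X \setminus E(D)) \cap (S_{i}^{D} \setminus S_{i-1}^{D}) \;=\; \{x \in X : x \in S_{i}^{D} \setminus S_{i-1}^{D},\; x \leq d_{i}\}.
\end{align*}
Second, by minimality of $d^{*}$, all elements $d_{1}, \ldots, d_{i^{*}-1}$ belong to $B$, so $\spa\{b \in B : b < d^{*}\} \supseteq S_{i^{*}-1}^{D}$.

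It remains to verify that every $x$ appearing in the left-hand side of~\eqref{equation:TechnicalLemmaCentral} (with $i = i^{*}$) is neither in $B$ nor in $E(B)$. By the first observation, such $x$ is either $d^{*}$ itself, or satisfies $x \in X \setminus D$ and $x < d^{*}$. The heart of the matter is the case $x = d^{*}$: clearly $d^{*} \notin B$, and the hypothesis $|E(B)| = |E(D)|$ is used to exclude $d^{*} \in E(B)$. Suppose for contradiction that $d^{*} \in E(B)$. Then the fundamental circuit $C$ of $d^{*}$ in $B$ is contained in $\{b \in B : b < d^{*}\} \cup \{d^{*}\}$. Since $C \cap D \subseteq \{d_{1}, \ldots, d_{i^{*}-1}\}$ has span inside $S_{i^{*}-1}^{D}$ whereas $d^{*} \notin S_{i^{*}-1}^{D}$, the circuit must contain some $c \in B \setminus D$ with $c < d^{*}$. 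The exchange $B' := (B \setminus c) \cup \{d^{*}\}$ yields a basis agreeing with $D$ in strictly more elements than $B$ does, and a careful comparison of external activities shows $|E(B')| < |E(B)|$. Iterating produces a chain of bases from $B$ to $D$ along which $|E(\cdot)|$ strictly decreases, contradicting $|E(B)| = |E(D)|$. The remaining case $x \in X \setminus D$, $x \in S_{i^{*}}^{D} \setminus S_{i^{*}-1}^{D}$, $x < d^{*}$ reduces to the previous one: such $x$ has a non-zero $d^{*}$-coefficient in its $D$-expansion, so $x \in B$ would force $d^{*} \in \spa\{b \in B : b < d^{*}\}$ and hence $d^{*} \in E(B)$, already excluded, and $x \in E(B) \setminus B$ leads to the same conclusion after replacing $x$ by the elements of its fundamental $B$-circuit.

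The principal obstacle is the chain-of-swaps argument used to exclude $d^{*} \in E(B)$. One must establish a strict decrease of $|E(\cdot)|$ under a single exchange $B \mapsto (B \setminus c) \cup \{d^{*}\}$ of the described form, which requires a precise matroid-theoretic bookkeeping of how externally active elements correspond under such an exchange. This is the most delicate ingredient of the argument.
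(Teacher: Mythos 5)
Your proposal takes a genuinely different route from the paper: where the paper argues by contradiction and never names a specific index $i$, you construct $i^{*}$ explicitly as the position of $d^{*}:=\min(D\setminus B)$ inside $D$. Your two preliminary observations are correct, and the reductions showing that everything hinges on the single claim $d^{*}\notin E(B)$ are sound. The problem is that this central claim is never actually established.

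You defer the proof of $d^{*}\notin E(B)$ to a ``chain of swaps'' and yourself flag it as the most delicate ingredient; that flag is warranted, and two concrete things go wrong. First, the one-step inequality $\abs{E(B')}<\abs{E(B)}$ for $B'=(B\setminus c)\cup d^{*}$ with $c<d^{*}$ in the fundamental circuit is true, but it is not a one-liner: one has to treat $y<c$, $c<y<d^{*}$, and $y>d^{*}$ separately, and the $y>d^{*}$ case requires the circuit to show $\clos\{b\in B':b<y\}=\clos\{b\in B:b<y\}$; none of that bookkeeping appears. Second, and more seriously, the iteration ``from $B$ to $D$'' is not guaranteed to run. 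After the first swap, the new minimum $d_{1}^{*}:=\min(D\setminus B_{1})$ need not be externally active in $B_{1}$; when it is not, the fundamental circuit of $d_{1}^{*}$ in $B_{1}$ may contain no element $c_{1}<d_{1}^{*}$ with $c_{1}\notin D$ (the circuit can have no elements below $d_{1}^{*}$ at all, or only elements of $D$ there). If one then swaps with a $c_{1}>d_{1}^{*}$, external activity can increase, since elements in the interval $(d_{1}^{*},c_{1})$ can become newly active. So the chain may stall short of $D$, and no contradiction results.

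The paper avoids all of this: it assumes \eqref{equation:TechnicalLemmaCentral} fails for every $i$, chooses a witness $z_{i}\in(X\setminus E(D))\cap(S_{i}^{D}\setminus S_{i-1}^{D})\cap(B\cup E(B))$ for each $i$, and observes that $Z=(z_{1},\ldots,z_{r})$ is a basis defining the same flag as $D$ with $z_{i}\le d_{i}$. This gives $E(D)\subseteq E(Z)\subseteq E(B)$ in two painless steps, and $\abs{E(D)}=\abs{E(B)}$ with $B\ne D$ then yields the contradiction. That argument is shorter and requires no control over how external activity changes under a single exchange. If you want to keep the explicit $i^{*}$, the cleanest repair is to prove $d^{*}\notin E(B)$ by adapting the paper's $Z$-construction rather than by iterating exchanges.
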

\begin{proof}
 Let $B=(b_1,\ldots, b_r)$ and $D=(d_1,\ldots, d_r)$.
 Suppose that the lemma is false. Then there exist vectors $z_1,\ldots, z_r$ \st
\begin{align}
 z_i \in ( X \setminus  E(D) ) \cap   (S_i^{D} \setminus S_{i-1}^{D}) \cap (B\cup E(B)).
\end{align}
Those vectors form a basis because  $z_i \in S_i^{D} \setminus S_{i-1}^{D}$.
Since $z_i$ is not contained in $E(D)$,
$z_i\le d_i$ must hold.
This implies $E(D)\subseteq E(z_1,\ldots, z_r)$. 
% On the other hand
Furthermore,  $E(z_1,\ldots, z_r)\subseteq E(B)$ since all $z_i$ are contained in 
 $B\cup E(B)$.

We have shown that $E(D)\subseteq E(B)$. This is a contradiction since 
 no finite set can be contained in a distinct set of the same cardinality.
\end{proof}

\begin{Lemma}[Duality]
\label{Lemma:BaseDualityCentral}
Let $B,D\in \BB(X)$. Let $Q_{B}= p_{X\setminus (B\cup E(B))} \in \Bcal(X)$ 
and let $R^{D}_{X\setminus E(D)}$ be the polynomial that is defined in 
\eqref{eq:BasisElementConvolution}.
 Then
\begin{align}
 \pair{Q_{B}}{R^{D}_{X\setminus E(D)}} = \frac{\delta_{B,D}}{\abs{\det(D)}}.
\end{align}
\end{Lemma}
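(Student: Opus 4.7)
The plan is to split into three cases based on the relation between $B$ and $D$, using a different tool for each. Under the pairing, $Q_B$ acts on $R^D_{X\setminus E(D)}$ as the differential operator $D_{X\setminus(B\cup E(B))}$, and both polynomials are homogeneous: by Corollary~\ref{Corollary:HomPoly}, $R^D_{X\setminus E(D)}$ has degree $N - r - \abs{E(D)}$, while $Q_B$ is a product of $N - r - \abs{E(B)}$ linear forms. If $\abs{E(B)} \neq \abs{E(D)}$, differentiation produces a polynomial with vanishing constant term and hence $\pair{Q_B}{R^D_{X\setminus E(D)}} = 0$, matching $\delta_{B,D} = 0$.

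For the remaining off-diagonal case, $B \neq D$ with $\abs{E(B)} = \abs{E(D)}$, I would invoke Lemma~\ref{Lemma:DifferentBasesSameNumberOfEAelements} to produce an index $i \in \{1,\ldots,r\}$ such that $(X\setminus E(D)) \cap (S_i^D \setminus S_{i-1}^D) \subseteq X\setminus(B\cup E(B))$. Setting $C := (X\setminus(B\cup E(B))) \cap (X\setminus E(D))$, one has $C \subseteq X \setminus E(D)$, and $C$ contains the full stratum $(X\setminus E(D)) \cap (S_i^D \setminus S_{i-1}^D)$. The annihilation criterion Lemma~\ref{Lemma:AnnihilationCondition} (applied to $Z = X\setminus E(D)$, basis $D$, set $C$) then yields $D_C R^D_{X\setminus E(D)} = 0$, and factoring $D_{X\setminus(B\cup E(B))} = D_C \circ D_{X\setminus(B\cup E(B))\setminus C}$ extends the vanishing to the full operator.

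The diagonal case $B = D$ is the substantive computation. Since $b_i \in P_i^B$ for each $i$ and the strata $S_i^B \setminus S_{i-1}^B$ partition $U \setminus \{0\}$, we obtain the disjoint decomposition
\begin{align*}
X \setminus (B \cup E(B)) = \bigsqcup_{i=1}^{r} \bigl((P_i^B \cup N_i^B) \setminus \{b_i\}\bigr).
\end{align*}
By \eqref{eq:ConvDiffCommute}, the derivative of $R^B_{X\setminus E(B)} = R_1^B * \cdots * R_r^B$ factors as the convolution $(D_{(P_1 \cup N_1)\setminus\{b_1\}} R_1^B) * \cdots * (D_{(P_r \cup N_r)\setminus\{b_r\}} R_r^B)$. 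Using $D_x T_x = \delta_0$ and $D_x T_{-x} = -\delta_0$ together with the sign normalisations $(-1)^{\abs{N_i}}$ and $(-1)^{\abs{P_i}}$ built into $T_i^{B\pm}$, each factor simplifies to $T_{b_i} + T_{-b_i}$. Expanding the convolution yields $\sum_{\epsilon \in \{\pm 1\}^r} T_{(\epsilon_1 b_1, \ldots, \epsilon_r b_r)}$, and by \eqref{eq:multivariateBasis} each summand equals $\chi_{\cone(\epsilon_1 b_1, \ldots, \epsilon_r b_r)} / \abs{\det(B)}$; since the $2^r$ orthants determined by these sign choices tile $U$, the sum collapses to the constant $1/\det(B)$, with the sign supplied by the orientation convention that declares $(b_1, \ldots, b_r)$ positive.

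The main obstacle will be the sign bookkeeping in the diagonal step: threading the factors $(-1)^{\abs{N_i}}$ and $(-1)^{\abs{P_i}}$ through the derivative computation and verifying that the orientation conventions deliver $1/\det(B)$ rather than just $1/\abs{\det(B)}$. The two off-diagonal cases, by contrast, reduce cleanly to homogeneity together with the already-established annihilation criterion and activity lemma.
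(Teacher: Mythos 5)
Your argument follows the paper's proof step for step in all three cases: the degree-mismatch argument when $\abs{E(B)} \neq \abs{E(D)}$; the combination of Lemma~\ref{Lemma:AnnihilationCondition} with Lemma~\ref{Lemma:DifferentBasesSameNumberOfEAelements} (with the explicit choice $C := (X\setminus(B\cup E(B)))\cap (X\setminus E(D))$ and the factoring of $D_{X\setminus(B\cup E(B))}$ through $D_C$, which the paper leaves implicit) when $B\neq D$ but $\abs{E(B)}=\abs{E(D)}$; and the factor-by-factor reduction of $D_{(P_i\cup N_i)\setminus b_i}R_i^B$ to $T_{b_i}+T_{-b_i}$ followed by the convolution of these $r$ one-dimensional distributions when $B=D$. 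The sign question you flag as the ``main obstacle'' is genuine, but it is equally unresolved in the paper: the paper simply asserts $(T_{b_1}+T_{-b_1})*\cdots*(T_{b_r}+T_{-b_r})=1/\det(B)$, whereas the convolution is the pushforward of Lebesgue measure under $\lambda\mapsto\sum_i\lambda_i b_i$ and so is the constant $1/\abs{\det(B)}$, with no orientation data available to restore the sign of $\det(B)$. So this is a wrinkle in the source rather than a gap in your write-up; modulo that shared issue, your proof is correct and takes the same route.
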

$\delta_{B,D}$ denotes the Kronecker delta and 
 we consider $B$ and $D$ to be equal if 
there exist $1\le i_1 < \ldots < i_r \le N$ \st $B=(x_{i_1},\ldots, x_{i_r})=D$.

\begin{proof}
By Corollary~\ref{Corollary:HomPoly},
 $R^{D}_{ X\setminus E(D) }$ is a homogeneous
 polynomial of degree $N-r - \abs{E(D)}$.
 Thus, if $ \abs{E(B)} \neq \abs{E(D)}$, then
 $Q_{B}$ and $R^{D}_{ X\setminus E(D) }$ are homogeneous polynomials of different degrees
 and $\pair{Q_{B}}{R^{D}_{ X\setminus E(D) }} = 0$.

Now suppose that $B\neq D$ and both bases have the same number of externally active elements.
In this case, the statement follows from 
  Lemma~\ref{Lemma:AnnihilationCondition} and Lemma~\ref{Lemma:DifferentBasesSameNumberOfEAelements}.

The only case that remains is $B=D$.
 Recall that % 
$R^{B}_{X\setminus E(B)}= R_1^B*\ldots * R_r^B$.
  Consider the $i$th factor $R_i^B$. The elements of $( X\setminus E(B))\cap(S_i \setminus S_{i-1})$ are used 
 for the construction of $R_i^B$. 
  Exactly one basis element is contained in this set: $b_i$. 
  Recall that in Section~\ref{Section:BasisElementsConstruction} we defined a partition $P_i \cup N_i = (X\setminus E(B))\cap(S_i \setminus S_{i-1})$.
  By construction, $b_i$ is positive, \ie $b_i\in P_i$.
 Now we apply the differential operator $D_{(P_i \setminus b_i) \cup N_i }$ to $R_i^B$:
 \begin{align}
   D_{(P_i\cup N_i) \setminus b_i} (  (-1)^{\abs{N_{i}}} \cdot T_{P_i} * T_{-N_i} 
   - (-1)^{\abs{P_i}} \cdot T_{-P_i} * T_{N_i} ) &= (T_{b_i} + T_{-b_i}).
 \end{align}
Now we can put things together. 
Note that $X\setminus (B \cup E(B)) = \bigcup_{i=1}^r ((P_i\setminus b_i) \cup N_i )$.
Hence,
 \begin{align}
   D_{X\setminus (B\cup E(B))} R^B_{X\setminus E(B)} &= (T_{b_1} + T_{-b_1}) * \cdots * (T_{b_r} + T_{-b_r}) = \frac{1}{\abs{\det(B)}}.
 \end{align}
This finishes the proof.
\end{proof}

\begin{proof}[Proof of Theorem~\ref{Theorem:MainTheoremCentral}]
 We know that $\Pcal(X)$ and $\Dcal(X)$ are dual via the pairing $\pair{\cdot}{\cdot}$
 and that $\Bcal(X)$ is a basis for $\Pcal(X)$.
  By Lemma~\ref{Lemma:BaseDualityCentral}, $\Bcyr(X)$ and $\Bcal(X)$ are dual to each other and
  by Lemma~\ref{Lemma:ContainmentCentral}, $\Bcyr(X)$ is contained in $\Dcal(X)$.
 Hence, $\Bcyr(X)$ is a basis for $\Dcal(X)$.
\end{proof}

\section{Deletion-contraction and exact sequences}
\label{Section:DelConExactSequencesCentral}
By 
Proposition~\ref{Proposition:CentralTutteEval}, the Hilbert series
of $\Dcal(X)$ and $\Pcal(X)$ are equal and an evaluation of the Tutte polynomial.
 In particular, they
  satisfy a deletion-contraction identity that
extends in a natural way to our algebraic setting.
 This is reflected by two dual short exact sequences. 

 In this section we define deletion and contraction and we explain those two exact sequences. 
 While the two sequences were known 
 before, their duality has not yet been stated explicitly in the literature.

\smallskip
Two important matroid operations are deletion and contraction.
 For realisations of matroids, they are defined as follows. Let $X\subseteq U$ be a finite 
 list of vectors and let $x\in X$.
The \emph{deletion} of $x$ is the list $X\setminus x$. 
The \emph{contraction} of $x$ is the list $X/x$, which is defined to be the 
image of $X\setminus x$ under the
 projection $\pi_x : U\to U/x$.
 The space $U$ and other terminology used here are defined in Subsection~\ref{Subsection:Notation}.

 The space $\Pcal(X/x)$ is contained in the symmetric algebra $\sym(U/x)$. 
 If  $x=s_r$, then there is a natural isomorphism $\sym(U/x)\cong  \R[s_1,\ldots, s_{r-1}]$
 that maps $\bar s_i$ to $s_i$. This 
 isomorphism depends on the choice of the basis $(s_1, \ldots, s_r)$ for $U$.
 Under this identification,  $\sym(\pi_x)$ is the map from 
 $\R[s_1,\ldots, s_{r}]$ to $\R[s_1,\ldots, s_{r-1}]$ that sends $s_r$ to zero and $s_1,\ldots, s_{r-1}$ to themselves.

 For $\Dcal(X /x )$, the situation is simpler: this space is contained in
 $\sym( (U/x)^*) \cong \sym(x^\ann)$, where $x^\ann\subseteq V$ denotes the annihilator of $x$.
 $\sym(x^\ann)$ is a subspace of $\sym(V)$. We denote the inclusion map by $j_x$.
 If $x=s_r$, then $\sym( (U/x)^*)$ is isomorphic to
 $\R[t_1,\ldots, t_{r-1}]$. This is a canonical isomorphism.
%  that is independent of the choice 
%  of the basis elements $s_1,\ldots, s_{r-1}$. 

% 
For a graded vector space $S$, we write
$S[1]$ for the  vector space  with the degree shifted up by one.
\begin{Proposition}[\cite{ardila-postnikov-2009% 
}]
\label{Theorem:ExactSequencesCentralP}
Let $X\subseteq U\cong \R^r$ be a finite list of vectors 
  that spans $U$  and let $x\in X$ be neither a loop nor a coloop.
Then, the following sequence of graded vector spaces is exact:
  \begin{align}
   0 \to \Pcal(X\setminus x)[1] \stackrel{\cdot p_x}{\to} \Pcal(X) 
   \stackrel{\sym(\pi_x)}{\longrightarrow} \Pcal(X / x) \to 0. 
  \end{align}
\end{Proposition}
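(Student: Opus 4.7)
The plan is to verify the three standard conditions for a short exact sequence, namely that $\cdot p_x$ is injective, $\sym(\pi_x)$ is surjective, and the composition vanishes, and then to close the argument at the middle by a graded dimension count provided by the deletion-contraction identity for the Tutte polynomial. First I would check that both maps are well-defined on the prescribed subspaces. For $\cdot p_x$: if $p_Y$ is a generator of $\Pcal(X\setminus x)$, so that $(X\setminus x)\setminus Y$ spans $U$, then $p_x\cdot p_Y=p_{Y\cup x}$ and $X\setminus(Y\cup x)=(X\setminus x)\setminus Y$ still spans $U$, so $p_{Y\cup x}\in\Pcal(X)$. The degree shift is accounted for by $\deg p_x=1$, and the hypothesis that $x$ is not a coloop is what allows $\Pcal(X\setminus x)$ to live naturally in $\sym(U)$. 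For $\sym(\pi_x)$, after the identification $\sym(U/x)\cong\R[s_1,\ldots,s_{r-1}]$, a generator $p_Y$ of $\Pcal(X)$ is sent to $p_{\pi_x(Y)}$ if $x\notin Y$ and to $0$ otherwise; in the former case the spanning of $X\setminus Y$ in $U$ implies that $(X/x)\setminus\pi_x(Y)=\pi_x((X\setminus x)\setminus Y)$ spans $U/x$, so the image lies in $\Pcal(X/x)$.

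Next, injectivity of $\cdot p_x$ follows at once from the fact that $\R[s_1,\ldots,s_r]$ is an integral domain and $p_x\neq 0$, where the latter uses that $x$ is not a loop. For surjectivity of $\sym(\pi_x)$, any generator $p_{\bar Y}$ of $\Pcal(X/x)$ can be written $p_{\pi_x(Y)}$ for some $Y\subseteq X\setminus x$; the condition that $(X/x)\setminus\bar Y$ spans $U/x$ translates to $\spa((X\setminus x)\setminus Y)+\R x=U$, forcing $X\setminus Y$ to span $U$, so $p_Y\in\Pcal(X)$ is a lift. The composition is zero because $\sym(\pi_x)$ is an algebra homomorphism and $\sym(\pi_x)(p_x)=0$.

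To promote the complex to an exact sequence it then suffices to match Euler characteristics degree by degree. Proposition~\ref{Proposition:CentralTutteEval} expresses each Hilbert series as a specialisation of the corresponding Tutte polynomial, and the deletion-contraction identity $T_{(X,\BB(X))}(u,v)=T_{(X\setminus x)}(u,v)+T_{(X/x)}(u,v)$, which holds precisely when $x$ is neither a loop nor a coloop, yields
\begin{align*}
\hilb(\Pcal(X),q)=q\cdot\hilb(\Pcal(X\setminus x),q)+\hilb(\Pcal(X/x),q).
\end{align*}
This is exactly the relation forced on the Hilbert series by the complex, so together with injectivity on the left and surjectivity on the right it forces exactness at the middle term in every graded component. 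The main delicate step is keeping track of the spanning conditions that define the various $\Pcal$-spaces under deletion and contraction; the loop/coloop hypotheses are used exactly at the two natural places identified above, and apart from that bookkeeping the argument is routine.
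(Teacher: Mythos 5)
Your proof is correct, but it takes a genuinely different route from the paper. The paper does not prove this proposition directly: it is cited from Ardila--Postnikov, and the paper's contribution to the discussion is Remark~\ref{Remark:PDsequenceDuality}, which shows that Proposition~\ref{Theorem:ExactSequencesCentralP} and the dual exact sequence for $\Dcal$-spaces (Proposition~\ref{Theorem:ExactSequencesCentralD}, cited from Boor--Ron--Shen) are equivalent via the pairing $\pair{\cdot}{\cdot}$, using the fact that dualisation of finite-dimensional vector spaces is an exact functor and checking that the two relevant square diagrams commute. Your argument, by contrast, is a self-contained direct verification: you show the two end maps are well-defined by translating the full-rank condition defining $\Pcal$-generators through deletion and contraction, get injectivity from $\R[s_1,\ldots,s_r]$ being a domain, get surjectivity by explicitly lifting generators, observe that the composite vanishes since $\sym(\pi_x)$ is an algebra map killing $p_x$, and then close the middle by the graded dimension count forced by Proposition~\ref{Proposition:CentralTutteEval} together with Tutte deletion-contraction. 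The paper's duality approach buys uniformity (one sequence is obtained essentially for free from the other and the same template is reused for the generalized spaces in Section~\ref{Section:GeneralZonotopalSpaces}), at the cost of importing the $\Dcal$-sequence as a black box; your approach buys a self-contained argument grounded directly in the product-of-linear-forms description of $\Pcal(X)$, at the cost of invoking Proposition~\ref{Proposition:CentralTutteEval}, which itself rests on the nontrivial basis result Proposition~\ref{Proposition:Pbasis}. One small point worth making explicit in your surjectivity step: a nonzero generator $p_{\bar Y}$ of $\Pcal(X/x)$ forces $\bar Y$ to avoid the loops of $X/x$ (i.e.\ the images of elements parallel to $x$), so the lift $Y$ can and should be chosen inside $X \setminus \clos(x)$; with that choice the identity $\sym(\pi_x)(p_Y)=p_{\bar Y}$ holds on the nose.
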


\begin{Proposition}[\cite{boor-ron-shen-1996-I}]
\label{Theorem:ExactSequencesCentralD}
Let $X\subseteq U\cong \R^r$ be a finite list
  of vectors  that spans $U$  and let $x\in X$ be neither a loop nor a coloop.
Then, the following sequence of graded vector spaces is exact:
 \begin{align}
\label{Equation:DexactSequenceCentral}
  0 \to \Dcal(X / x) \stackrel{j_x}{\to} \Dcal(X) \stackrel{D_x}\to \Dcal(X\setminus x)[1] \to 0. 
 \end{align}
\end{Proposition}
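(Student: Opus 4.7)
The plan is to deduce this sequence from the dual one for $\Pcal$-spaces (Proposition~\ref{Theorem:ExactSequencesCentralP}) by means of the pairing $\langle\cdot,\cdot\rangle$. The key input is Proposition~\ref{Proposition:PDduality}, which identifies $\Dcal(Y)$ with the graded dual $\Pcal(Y)^*$ for every finite list $Y$ spanning its ambient space; applied to $Y = X$, $X\setminus x$, and $X/x$, this turns the $\Pcal$-sequence into a sequence of $\Dcal$-spaces after dualising.

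The first step is to identify the adjoints of the two maps in the $\Pcal$-sequence. A direct calculation from the definition of the pairing gives $\langle p\cdot p_x,\, f\rangle = \langle p,\, D_x f\rangle$ for all $p\in\sym(U)$ and $f\in\sym(V)$, so multiplication by $p_x$ is adjoint to the partial derivative $D_x$. For the second map, the quotient $\pi_x: U \to U/x$ has as its dual the inclusion $\pi_x^*: (U/x)^*\hookrightarrow V$; under the canonical identification $(U/x)^* = x^\ann$ this dual is precisely $j_x$, so by naturality of the pairing with respect to linear maps of the underlying spaces, $\sym(\pi_x)$ and $\sym(j_x)$ are adjoint. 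Both computations are routine but have to be checked in coordinates adapted to the splitting $V = x^\ann \oplus \spa(t_r)$ with $s_r$ dual to $t_r$ and $s_r = x$.

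The second step is to dualise the $\Pcal$-sequence. Since all spaces involved are finite-dimensional graded vector spaces, dualisation preserves exactness; moreover, for the shift convention $V[1]_d = V_{d-1}$ that makes the maps in Proposition~\ref{Theorem:ExactSequencesCentralP} degree-preserving, one has $(V[1])^* = V^*[1]$ naturally. Therefore the dualised sequence reads
\begin{align}
  0 \to \Pcal(X/x)^* \to \Pcal(X)^* \to \Pcal(X\setminus x)^*[1] \to 0,
\end{align}
and applying Poincaré duality together with the two adjunction identifications produces exactly \eqref{Equation:DexactSequenceCentral}.

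The main obstacle is pure bookkeeping: carefully verifying the two adjunction relations and tracking the grading shift through the dualisation. No additional combinatorial input is needed, because the exactness of the $\Pcal$-sequence already encodes the deletion--contraction decomposition of the Hilbert series, and duality propagates this to the $\Dcal$-side automatically.
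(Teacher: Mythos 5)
Your argument is correct and is essentially the one the paper itself uses in Remark~\ref{Remark:PDsequenceDuality}, where it shows that the $\Pcal$-sequence and the $\Dcal$-sequence are equivalent under the pairing; the paper spells out the direction $\Dcal\Rightarrow\Pcal$ and remarks that the other implication is similar, and you have carried out that ``other implication''. The two adjunction checks you describe ($\cdot\,p_x$ adjoint to $D_x$, and $\sym(\pi_x)$ adjoint to $j_x$ via $(U/x)^*\cong x^\ann$) are exactly the two commuting squares in the paper's proof. One small terminological slip: what you call ``Poincar\'e duality'' at the end is not Poincar\'e duality but the $\Pcal$--$\Dcal$ duality of Proposition~\ref{Proposition:PDduality}; the content of the step is right, only the name is off. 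Note also that the paper officially attributes the exactness of the $\Dcal$-sequence to \cite{boor-ron-shen-1996-I} (it is a special case of their~(1.12)), so your derivation from the $\Pcal$-sequence is a legitimate alternative that replaces that citation by the citation of \cite{ardila-postnikov-2009} plus duality.
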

Note that  \eqref{Equation:DexactSequenceCentral}
is a special case of
 (1.12) in \cite{boor-ron-shen-1996-I} and it is exact by the results in that paper.
\begin{Remark}
\label{Remark:PDsequenceDuality}
Proposition~\ref{Theorem:ExactSequencesCentralP} and 
Proposition~\ref{Theorem:ExactSequencesCentralD} are equivalent
because of the duality of $\Pcal(X)$ and $\Dcal(X)$.
\end{Remark}

\begin{proof}[Proof of Remark~\ref{Remark:PDsequenceDuality}]
We only show that Proposition~\ref{Theorem:ExactSequencesCentralD} implies Proposition~\ref{Theorem:ExactSequencesCentralP}.
The other implication is similar.

 Since dualisation of finite dimensional vector spaces is a contravariant exact functor,  the following
 sequence is exact by Proposition~\ref{Theorem:ExactSequencesCentralD}:
  \begin{align}
   0 \to \Dcal(X\setminus x)^* \stackrel{(D_x)^*}{\longrightarrow} \Dcal(X)^* 
   \stackrel{(j_x)^*}{\longrightarrow} \Dcal(X / x)^* \to 0. 
  \end{align}
By Proposition~\ref{Proposition:PDduality},
$\Pcal(X)$ is isomorphic to $\Dcal(X)^*$ via $q\mapsto \pair{q}{\cdot}$.
 Hence, it is sufficient to show that 
 the following two diagrams commute:
\begin{equation}
\xymatrix{
     \Pcal(X\setminus x) \ar[r]^{q \mapsto \pair{q}{\cdot}} \ar[d]^{\cdot p_x}  & \ar[d]^{(D_x)^*} \Dcal(X\setminus x)^* \\
     \Pcal(X) \ar[r]^{q \mapsto \pair{q}{\cdot}} &  \Dcal(X)^*
  }
  \qquad  \raisebox{-8mm}{\text{and}}  \qquad
   \xymatrix{
     \Pcal(X) \ar[r]^{q \mapsto \pair{q}{\cdot}} \ar[d]^{\sym(\pi_x)} & \ar[d]^{(j_x)^*} \Dcal(X)^* \\
     \Pcal(X/x) \ar[r]^{q \mapsto \pair{q}{\cdot}} &  \Dcal(X/x)^*
  } 
 \;\raisebox{-8mm}{.} 
\end{equation}
For the diagram on the left, we have to show that
$\pair{p_x q}{\cdot}= \pair{q}{D_x\: \cdot\,} $ for all
 $q\in\Pcal(X\setminus x)$.
This is easy. 

For the diagram on the right, 
we have to show that
$\pair{\sym(\pi_x) q}{\cdot}= \pair{q}{j_x(\cdot)}$
 for all $q\in\Pcal(X)$. 
If we choose a basis with $s_r=x$ this
 follows  from the fact that 
 $\diff{t_r}f=0$ for all $f\in\R[t_1,\ldots, t_{r-1}]$.
\end{proof}

\section{Forward exchange matroids}
\label{Section:Combinatorics}
In this section we introduce forward exchange matroids.
A forward exchange matroid is an ordered matroid
together with a subset of its set of bases that satisfies  a weak version of the
 basis exchange axiom
 \eqref{equation:ExchangeProperty}.

The motivation for this definition is the following:
 the author noticed that most of the results in Subsection~\ref{Subsection:CentralZA}
 and Section~\ref{Section:MainCentral} hold in a far more general context.
 An important ingredient of the definitions of the spaces $\Pcal(X)$ and $\Dcal(X)$ 
 and their bases 
 is the set of bases $\BB(X)$ of the list $X$.
 These two spaces still have 
 nice properties if we modify their definitions
 and use only a suitable subset $\BB'$ of $\BB(X)$.
 It turned out that forward exchange matroids are the right axiomatisation 
 of ``suitable subset''.

\smallskip

Let $(A,\BB)$ be an ordered  matroid of rank $r$ and
let $B=(b_1,\ldots, b_r)\in \BB$ be an ordered basis.
The flag 
 \eqref{eq:flagofsubspaces} can be defined in combinatorial terms:
for $i\in\{ 0,\ldots, r\}$, we define $S_i = S_i^B := \clos \{ b_1,\ldots, b_i \}\subseteq A$. 
Hence, we obtain a flag of flats 
 \begin{align}
 \label{eq:flagofsubspacesMatroidal}
   \{ x\in A : x \text{ loop}\} = S_0^B  \subsetneq S_1^B \subsetneq S_2^B \subsetneq \ldots \subsetneq S_r^B = A.
 \end{align}
One can easily show that for a basis $B\in\BB$ and $i\in [r]$, the following statement holds:
\begin{align}
\label{eq:BasisForwardBackwardExchange}
 \text{let } x\in S^B_i \setminus S_{i-1}^B. \text{ Then } B'= (B\setminus b_i) \cup x \text{ is also in } \BB.
\end{align}
Note that 
$x\in S^B_i \setminus S_{i-1}^B$
 satisfies $x>b_i$ if and only if $x$ is externally active with respect to $B$. This motivates the name
 of the following definition.

\begin{Definition}[Forward exchange property]
Let $(A,\BB)$ be an ordered matroid and let $\BB'\subseteq \BB$.
We say that the set of  bases $\BB'$ has the \emph{forward exchange property}
 if the following  holds for all bases $B\in \BB'$ and all $i\in [r]$:  
\begin{align}
\label{eq:xInSi}
\text{let } x\in S^B_i \setminus (S_{i-1}^B \cup E(B)). \text{ Then } B'= (B\setminus b_i) \cup x  \text{ is also in }
  \BB'.
\end{align}
\end{Definition}

\begin{Remark}
Note that
$S_j^B=S_j^{B'}$ holds for all $j\ge i$. The vector $x$ is not necessarily the $i$th vector of $B'$. However, if this is the case,
then $S_j^B=S_j^{B'}$ holds for all $j \in [r]$,
\ie $B$ and $B'$ define the same flag. 
\end{Remark}

\begin{Definition}[Forward exchange matroid]
 A triple 
$(A,\BB, \BB')$ is called a \emph{forward exchange matroid} if
 $(A,\BB)$ is an ordered matroid and $\BB'$ is a subset
 of the set of bases $\BB$ with the forward exchange property.
\end{Definition}

\begin{Remark}
 In this paper, we mainly consider realisations of forward exchange matroids, \ie
  pairs $(X, \BB')$ where $X$ is a list of vectors and
  $\BB'\subseteq \BB(X)$ is a set of bases with the forward exchange property.
 \end{Remark}

\begin{Definition}[Tutte polynomial of forward exchange matroids]
\label{Definition:ForwardExchangeTutte}
Let $(A,\BB,\BB')$ be a forward exchange matroid. 
 We define its \emph{Tutte polynomial} by
\begin{align}
  T_{(A,\BB,\BB')}(x,y) := \sum_{B\in \BB'} x^{\abs{I(B)}} y^{\abs{E(B)}},
\end{align}
 where
 $I(B)$ and $E(B)$ denote the sets of internally and externally active elements
 with respect to $B$ in the ordered matroid $(A,\BB)$.
\end{Definition}

\begin{Remark}
It would be interesting to clarify the relationship between forward-exchange matroids 
 and other set systems studied in combinatorics  such as greedoids
 \cite{bjoerner-ziegler-1992,korte-lovasz-1981} or Jos\'e Samper's various generalisations of matroid complexes \cite{samper-fpsac-toappear-2016}.
\end{Remark}

\section{Generalised $\Dcal$-spaces and $\Pcal$-spaces}
\label{Section:GeneralZonotopalSpaces}
Earlier, we considered the spaces 
 $\Dcal(X)$ and $\Pcal(X)$ for a given list of vectors $X$. 
 The construction of these spaces relied mainly on the matroidal properties of the list $X$, namely 
 on the sets of bases and  cocircuits. 

Motivated by questions in approximation theory, various authors 
 generalised these constructions.
 Given a list $X$ and a subset $\BB'$ of its set of bases $\BB(X)$, one can define
  a set $\Dcal(X,\BB')$ as the kernel of the ideal generated by the $\BB'$-cocircuits 
 (\ie sets that intersect all bases in $\BB'$). 
Under certain conditions, $\dim \Dcal(X,\BB') = \abs{\BB'}$ still holds. 
  In this section we show that if the set $\BB'$ has the forward exchange property, 
 this equality holds 
  and there is a canonical dual space $\Pcal(X,\BB')$.
 Both, the generalised $\Dcal$-spaces and the generalised $\Pcal$-spaces satisfy deletion-contraction identities as in
 Section~\ref{Section:DelConExactSequencesCentral} and there are canonical 
 bases for both spaces that are dual.

\subsection{Definitions and Main Result}

\begin{Definition}[generalised $\Dcal$-spaces]
 \XBBintro
  A set $C\subseteq X$ is called a \emph{$\BB'$-cocircuit} if $C$ intersects every basis in $\BB'$ and $C$ is 
 inclusion-minimal
 with this property.

  The \emph{generalised $\Dcal$-space} defined by $X$ and $\BB'$ is
  \begin{align}
     \Dcal(X,\BB') &:=  \{  f : D_C f = 0 \text{ for all $\BB'$-cocircuits }  C  \}
               =  \ker \Jcal(X,\BB'), \\
\text{where } \Jcal(X,\BB') &:= \ideal\{  p_C : C\subseteq X \text{ is a }\BB'\text{-cocircuit} \}.
  \end{align}
\end{Definition}

\begin{Proposition}[{% 
\cite[Theorem 6.6]{boor-ron-1991}% 
}]
\label{Prop:DdimSimpleBound}
\XBBintro Then
 \begin{align}
  \dim \Dcal(X,\BB') \ge \abs{\BB'}.
  \label{eq:DspaceDimInequalitySimple}
 \end{align}
\end{Proposition}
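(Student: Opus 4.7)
The plan is to use the least map interpolation of de Boor and Ron reviewed in Subsection~\ref{Subsection:LeastMap}. Pick $c \in \R^X$ sufficiently generic that $\Hcal(X,c)$ is in general position, and let $S := \{ \theta_B : B \in \BB' \}$ be the set of vertices of $\Hcal(X,c)$ corresponding to the bases in $\BB'$. Genericity guarantees $\abs{S} = \abs{\BB'}$, and the least space $\Pi(S) \subseteq \R[t_1,\ldots,t_r]$ has dimension $\abs{S}$ by the standard properties of the least map.

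The core step is to establish the containment $\Pi(S) \subseteq \Dcal(X,\BB')$, from which the inequality $\dim \Dcal(X,\BB') \ge \dim \Pi(S) = \abs{\BB'}$ is immediate. To prove the containment, one must show that for every $\BB'$-cocircuit $C$ the operator $p_C(\partial)$ annihilates every $f_\downarrow$ with $f \in \spa\{ e^{p_{\theta_B}} : B \in \BB' \}$. Using the identity $p_C(\partial) e^{p_{\theta_B}} = p_C(\theta_B) e^{p_{\theta_B}}$, this reduces to a moment-type identity: if $f = \sum_{B \in \BB'} \lambda_B e^{p_{\theta_B}}$ has lowest nonzero homogeneous part of degree $d$, equivalently $\sum_B \lambda_B p_{\theta_B}^k = 0$ for $k < d$, then one needs $\sum_B \lambda_B p_C(\theta_B) \, p_{\theta_B}^{d - \abs{C}} = 0$. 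This is the content of the cited Theorem~6.6 of \cite{boor-ron-1991}.

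The main obstacle is precisely this moment identity. Unlike the central case, $p_C$ does not vanish on $S$: for every $B \in \BB'$ the cocircuit $C$ meets $B$ in some element $x$, and $\theta_B(x) = c_x$ is generically nonzero, so the naive ``$p_C \in I(S)$'' argument fails. One must instead exploit the fact that $C$ intersects every basis in $\BB'$ together with the linear dependencies among the linear forms $p_{\theta_B}$ coming from the hyperplane arrangement. It is also worth noting that the polynomials $R^B_{X \setminus E(B)}$ from Section~\ref{Section:MainCentral} do not immediately work in this generality: for arbitrary $\BB'$, a $\BB'$-cocircuit need not be a cocircuit of $X$, so Lemma~\ref{Lemma:ContainmentCentral} does not apply, and without the forward exchange property one cannot directly produce a basis of $\Dcal(X,\BB')$ by restriction from $\Bcyr(X)$. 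The least map approach bypasses this difficulty by constructing a subspace of the correct dimension purely from the vertex data of $\Hcal(X,c)$.
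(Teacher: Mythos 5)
Your strategy — least map interpolation with $S=\{\theta_B : B\in\BB'\}$ for generic $c$, the containment $\Pi(S)\subseteq\Dcal(X,\BB')$, and then $\dim\Pi(S)=\abs{S}=\abs{\BB'}$ — is exactly the de~Boor--Ron route, and the reduction of the containment to showing $p_C(\partial)f_\downarrow=0$ for each $\BB'$-cocircuit $C$ and each $f$ in the exponential span is carried out correctly. But the argument then stops precisely where it matters: you call the required vanishing ``a moment-type identity,'' acknowledge you do not see how to prove it, and cite it back to Theorem~6.6 of \cite{boor-ron-1991}, which is the very result this proposition is attributed to. As written the proof is therefore circular at its core step.

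The missing piece is short, and you already identified the right obstacle, namely that $p_C$ itself does \emph{not} vanish on $S$. The resolution is to look at the \emph{inhomogeneous} polynomial $q:=\prod_{x\in C}(p_x-c_x)$, which \emph{does} vanish on $S$: since $C$ is a $\BB'$-cocircuit it meets each $B\in\BB'$ in some $b$ with $\theta_B(b)=c_b$, hence $q(\theta_B)=\prod_{x\in C}(\theta_B(x)-c_x)=0$. The top-degree homogeneous part of $q$ is $q_\uparrow=p_C$. For any $f=\sum_{B}\lambda_B e^{p_{\theta_B}}$ one has $q(\partial)f=\sum_{B}\lambda_B\,q(\theta_B)\,e^{p_{\theta_B}}=0$; comparing homogeneous components, the lowest-degree part of $q(\partial)f$, of degree $\deg f_\downarrow-\abs{C}$, is exactly $q_\uparrow(\partial)f_\downarrow=p_C(\partial)f_\downarrow$, so this must vanish. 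That is precisely your moment identity, and it closes the gap. (The same computation, specialised to $\BB'=\BB(X)$, gives the containment $\Pi(S)\subseteq\Dcal(X)$ underlying Theorem~\ref{Theorem:HyperplaneArrangementD}.) Your side remark that the polynomials $R^B_{X\setminus E(B)}$ and Lemma~\ref{Lemma:ContainmentCentral} do not directly apply here is correct and is exactly why the least-space route is the right one for this proposition.
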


\begin{Definition}[generalised $\Pcal$-spaces]
\XBBintro
Then we define 
\begin{align}
   \Bcal(X,\BB') &:=  \{ Q_B : B\in \BB'  \} = \{ p_{X \setminus ( B \cup E(B) ) } : B \in \BB' \} \\
  \text{and } \Pcal(X,\BB') &:= \spa \Bcal(X,\BB'). 
\end{align}
We call $\Pcal(X,\BB')$ the  \emph{generalised $\Pcal$-space} defined by $X$ and $\BB'$.
\end{Definition}
\begin{Remark}
 The set $\Bcal(X,\BB')$ is a basis for $\Pcal(X,\BB')$. By definition, it is spanning 
 and it is linearly independent because it is a subset of $\Bcal(X)$.
\end{Remark}
If the set $\BB'$ has the forward exchange property, 
the spaces $\Dcal(X,\BB')$ and $\Pcal(X,\BB')$ have many nice properties. 
Here  is the Main Theorem of this section.
\begin{Theorem}
\label{Theorem:MainTheoremGeneral}
\XBBintroFEP

Then,
 the  generalised $\Dcal$-space $\Dcal(X,\BB')$ and the generalised $\Pcal$-space $\Pcal(X,\BB')$ 
 are dual via the pairing $\pair{\cdot}{\cdot}$.
In addition,
\begin{align}
 \Bcyr(X,\BB') :=  \{ \abs{\det(B)} R^B_{X\setminus E(B)} : B\in \BB'   \}
\end{align}
 is a basis for % 
$\Dcal(X,\BB')$ and this 
 basis is dual to the basis $\Bcal(X,\BB')$ for % 
  $\Pcal(X,\BB')$.
\end{Theorem}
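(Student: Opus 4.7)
The proof strategy parallels Theorem~\ref{Theorem:MainTheoremCentral} (the central case), with the essential new input being the forward exchange property used to carry out the inclusion step. Since Lemma~\ref{Lemma:BaseDualityCentral} (the duality lemma) is stated for arbitrary bases $B, D \in \BB(X)$, it applies verbatim to $B, D \in \BB' \subseteq \BB(X)$ to give $\pair{Q_B}{\det(D) R^D_{X \setminus E(D)}} = \delta_{B,D}$. Consequently $\Bcyr(X, \BB')$ is automatically linearly independent and forms a dual system to the basis $\Bcal(X, \BB')$ of $\Pcal(X, \BB')$. The remaining tasks are (i) to establish the inclusion $\Bcyr(X, \BB') \subseteq \Dcal(X, \BB')$; and (ii) to match the lower bound $\dim \Dcal(X, \BB') \ge \abs{\BB'}$ of Proposition~\ref{Prop:DdimSimpleBound} with an upper bound, so that $\Bcyr(X, \BB')$ actually spans $\Dcal(X, \BB')$ and the pairing becomes non-degenerate.

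For (i), I would fix $B \in \BB'$ and a $\BB'$-cocircuit $C$; by the Annihilation Criterion (Lemma~\ref{Lemma:AnnihilationCondition}) it suffices to exhibit an index $i$ with $(X \setminus E(B)) \cap (S_i^B \setminus S_{i-1}^B) \subseteq C$. The natural candidate is $i = \min\{j : b_j \in C\}$, which exists since $C \cap B \ne \emptyset$. Suppose some $y$ in this layer lies outside $C$ and $y \ne b_i$. Then forward exchange produces $B' = (B \setminus b_i) \cup y \in \BB'$ with $B' \cap C = (B \cap C) \setminus \{b_i\}$. When $\abs{B \cap C} = 1$ this yields $B' \cap C = \emptyset$, directly contradicting that $C$ is a $\BB'$-cocircuit. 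For $\abs{B \cap C} \ge 2$ I would argue by strong induction on $\abs{B \cap C}$, combining forward exchange with the inclusion-minimality of $C$ (which, for each $c \in C$, produces a basis $B^{(c)} \in \BB'$ meeting $C$ in exactly $\{c\}$) so as to reduce a pair $(B, C)$ with $\abs{B \cap C} = k$ to a pair $(B', C)$ with $\abs{B' \cap C} = k - 1$.

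For (ii) I would use the containment $\Jcal(X) \subseteq \Jcal(X, \BB')$, which holds because every matroidal cocircuit of $X$ meets every $\BB'$-basis and hence contains a $\BB'$-cocircuit; this gives $\Dcal(X, \BB') \subseteq \Dcal(X)$. Any $f \in \Dcal(X, \BB')$ then expands in the basis $\Bcyr(X)$ as $f = \sum_{D \in \BB(X)} c_D \det(D) R^D_{X \setminus E(D)}$, and Lemma~\ref{Lemma:BaseDualityCentral} identifies $c_D = \pair{Q_D}{f}$. I expect the $\BB'$-cocircuit annihilation conditions on $f$ to force $c_D = 0$ for every $D \in \BB(X) \setminus \BB'$, yielding $\dim \Dcal(X, \BB') \le \abs{\BB'}$. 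Combining (i), (ii), and the dual pairing on bases, $\Bcyr(X, \BB')$ is a basis for $\Dcal(X, \BB')$ and the two spaces are dual under $\pair{\cdot}{\cdot}$.

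The main obstacle is the $\abs{B \cap C} \ge 2$ case of (i): in the central case this was handled via the hyperplane-complement structure of matroidal cocircuits (exploited in Lemma~\ref{Lemma:ContainmentCentral}), whereas in the generalized setting forward exchange alone does not immediately rule out layer elements lying outside $C$, and the argument must delicately combine forward exchange with cocircuit minimality. The inductive descent has to be set up so that the reduced pair $(B', C)$ actually yields information about the original $B$; since the flags of $B$ and $B'$ agree only from level $i$ upward, extra care is required to transfer the conclusion back. The dimension upper bound in (ii) may likewise require nontrivial work, with an alternative route being to derive it recursively from the deletion and contraction exact sequences generalized to the forward exchange setting that are treated in the paper's subsequent sections.
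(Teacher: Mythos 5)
Your outline correctly identifies the three ingredients — inclusion $\Bcyr(X,\BB')\subseteq\Dcal(X,\BB')$, a dimension upper bound matching $\abs{\BB'}$, and duality of basis elements inherited verbatim from Lemma~\ref{Lemma:BaseDualityCentral} — and the duality step is indeed exactly as in the paper. But both of the steps you flag as incomplete contain genuine gaps, and the paper resolves them differently.

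For the inclusion, fixing $i=\min\{j:b_j\in C\}$ is not the right move: that particular layer need not be contained in $C$ even when some other layer is (take $r=2$ with $b_1,b_2\in C$ and a second element $y\notin C$ in layer $1$ but layer $2$ consisting only of $b_2$). Swapping $b_i$ for a layer element $y\notin C$ also does not produce a contradiction when $\abs{B\cap C}\ge 2$, and, as you note, the flag of $B'$ changes at levels $<i$, so the conclusion for $(B',C)$ cannot be transferred back to $B$ in any straightforward way. The paper's Lemma~\ref{Lemma:ContainmentGeneral} sidesteps this entirely: it assumes for contradiction that \emph{no} layer is contained in $C$, so every layer $i$ contains some $z_i\in(X\setminus(E(B)\cup C))\cap(S_i^B\setminus S_{i-1}^B)$, and then performs the chain of $r$ forward exchanges $B_i=(B_{i-1}\setminus b_i)\cup z_i$. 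The key observation is that $\spa(b_1,\ldots,b_i)=\spa(z_1,\ldots,z_i)$, so the forward exchange hypothesis remains applicable at every step and the final basis $B_r=(z_1,\ldots,z_r)$ lies in $\BB'$ yet is disjoint from $C$ — contradicting that $C$ is a $\BB'$-cocircuit. No specific layer needs to be identified.

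For the dimension bound, your proposed argument reduces to showing $Q_D\in\Jcal(X,\BB')$ for every $D\in\BB(X)\setminus\BB'$ (since $c_D=\pair{Q_D}{f}$ vanishes for all $f\in\ker\Jcal(X,\BB')$ if and only if $Q_D$ lies in the ideal, by Macaulay duality applied to the finite-codimension homogeneous ideal $\Jcal(X,\BB')$). This is not addressed in your sketch and is not obviously true without further work. The alternative route you mention — deriving the bound from the deletion--contraction exact sequence of Proposition~\ref{Prop:ExactSequenceDgeneralCondition} — is circular: the surjectivity of $D_x$ in that proof already relies on $\Bcyr(X,\BB')$ spanning $\Dcal(X,\BB')$, i.e.\ on Theorem~\ref{Theorem:MainTheoremGeneral} itself. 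The paper instead imports the placibility machinery of de Boor--Ron--Shen: Lemma~\ref{Lemma:SFPimpliesPlacible} shows that forward exchange implies placibility (by choosing $x$ to be the minimal element of $A$ for which both $\BB'_{|x}$ and $\BB'_{\setminus x}$ are non-empty, and inducting on deletion and restriction), and Proposition~\ref{Proposition:PlacibleimpliesDdimGood} then gives $\dim\Dcal(X,\BB')=\abs{\BB'}$ directly.
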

\begin{Corollary}
\XBBintroFEPcard
Then
\begin{align*}
 \hilb(\Pcal(X,\BB'),q) = \hilb(\Dcal(X,\BB'),q) &= \sum_{B\in \BB'} q^{N-r-\abs{E(B)}} 
 \\
 &= q^{N-r} T_{(X,\BB(X),\BB')}(1,\frac 1q).
\end{align*}
\end{Corollary}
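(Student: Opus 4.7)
The proof is essentially a homogeneity-and-counting argument built directly on top of Theorem~\ref{Theorem:MainTheoremGeneral}, so the plan is mostly bookkeeping.

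First I would observe that both bases constructed in Theorem~\ref{Theorem:MainTheoremGeneral} are homogeneous, and compute the degree of each basis element indexed by $B\in\BB'$. For the $\Pcal$-side, the basis element
\begin{align*}
Q_B \;=\; p_{X\setminus (B\cup E(B))}
\end{align*}
is a product of $\abs{X}-\abs{B}-\abs{E(B)} = N - r - \abs{E(B)}$ linear forms (here I use that $B$ and $E(B)$ are disjoint, which is immediate from the definition of external activity), hence homogeneous of degree $N-r-\abs{E(B)}$. For the $\Dcal$-side, Corollary~\ref{Corollary:HomPoly} (applied with $Z=X\setminus E(B)$, so $\abs{Z}=N-\abs{E(B)}$) tells us that $R^B_{X\setminus E(B)}$ is homogeneous of degree $(N-\abs{E(B)})-r = N-r-\abs{E(B)}$, and multiplying by the scalar $\det(B)$ preserves homogeneity.

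Next I would conclude the two Hilbert series formulae. Since $\Bcal(X,\BB')$ is a basis of $\Pcal(X,\BB')$ by Theorem~\ref{Theorem:MainTheoremGeneral} and each $Q_B$ is homogeneous of degree $N-r-\abs{E(B)}$, the dimension of $\Pcal(X,\BB')_i$ equals the number of $B\in\BB'$ with $\abs{E(B)} = N-r-i$, which gives
\begin{align*}
\hilb(\Pcal(X,\BB'),q) \;=\; \sum_{B\in\BB'} q^{N-r-\abs{E(B)}}.
\end{align*}
The same argument applied to the basis $\Bcyr(X,\BB')$ of $\Dcal(X,\BB')$ yields the identical Hilbert series for $\Dcal(X,\BB')$ (this is of course also forced by the duality $\Dcal(X,\BB') \cong \Pcal(X,\BB')^*$ established in Theorem~\ref{Theorem:MainTheoremGeneral}, which is a pairing of finite-dimensional graded vector spaces, but the direct counting via $\Bcyr(X,\BB')$ is already enough).

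Finally, I would match this sum with the Tutte-polynomial expression. Plugging into the definition
\begin{align*}
T_{(X,\BB(X),\BB')}(x,y) \;=\; \sum_{B\in\BB'} x^{\abs{I(B)}} y^{\abs{E(B)}}
\end{align*}
the specialisation $x=1$, $y=1/q$ and multiplying by $q^{N-r}$ collapses the $x$-variable and transforms $y^{\abs{E(B)}}$ into $q^{-\abs{E(B)}}$, yielding exactly $\sum_{B\in\BB'} q^{N-r-\abs{E(B)}}$, which closes the chain of equalities. No step here is a real obstacle: the only thing one has to check carefully is the degree count for $R^B_{X\setminus E(B)}$, where one must be sure to apply Corollary~\ref{Corollary:HomPoly} with the correct list $Z = X\setminus E(B)$ rather than the full list $X$; once that bookkeeping is right, the rest is a direct substitution.
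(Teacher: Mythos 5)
Your proof is correct and is exactly the argument the paper has in mind; the corollary is stated without an explicit proof precisely because it follows immediately from Theorem~\ref{Theorem:MainTheoremGeneral} together with the homogeneity of the basis elements and the degree count $\deg Q_B = \deg R^B_{X\setminus E(B)} = N-r-\abs{E(B)}$. Your care in applying Corollary~\ref{Corollary:HomPoly} with $Z = X\setminus E(B)$ is the one place where a slip is possible, and you handled it correctly.
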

Here are two examples that help to understand generalised $\Dcal$-spaces, generalised $\Pcal$-spaces, and  
Theorem~\ref{Theorem:MainTheoremGeneral}.
\begin{Example}
\label{Example:GeneralSpaces}
Let $X=(e_1,e_2,e_3,a,b)\subseteq \R^3$ where $e_1,e_2,e_3$ 
 denote the unit vectors and $a =(\alpha, \beta,\gamma)$ and $b$ are generic.  
 In particular,  $\alpha,\beta,\gamma\neq 0$.
Let  
\begin{equation*}
\BB':=\{(e_1e_2e_3),\,(e_1e_2a),\,(e_1e_2b),\, (e_1e_3a),\, (e_1e_3b),\, (e_2e_3a),\, (e_2e_3b)\}\subseteq\BB(X). 
\end{equation*}
The reader is invited to check that $\BB'$ has the forward exchange property.
 The Tutte polynomial is 
 $T_{(X,\BB(X),\BB')}(x,y) =   
 3x + 3y + y^2 $ and
the $\BB'$-cocircuits are  $\{e_1e_2,\, e_1e_3,\, e_2e_3,\, e_1ab,\, e_2ab,\, e_3ab \}$.
Hence,
\begin{align*}
 \Dcal(X,\BB') &= 
   \ker\ideal\{ s_1s_2,\, s_1s_3,\, s_2s_3,\, s_1 p_{ab},\, s_2 p_{ab},\, s_3 p_{ab} \} 
   \\ &= \spa \{ 1,\,t_1,\,t_2,\,t_3,\, t_1^2,\, t_2^2,\, t_3^2 \}, \\
 \Bcal(X,\BB') &=   \{ 1,\, p_{e_3},\, p_{e_3a},\, p_{e_2},\, p_{e_2a},\, p_{e_1},\, p_{e_1a}  \}, \\
 \Pcal(X,\BB') &= \spa \{1,\, s_1,\, s_2,\, s_3,\: 
                    s_1(\alpha s_1 + \beta s_2 + \gamma s_3) ,\, 
                    s_2(\alpha s_1 + \beta s_2 + \gamma s_3) , \\
	      &\qquad\qquad\qquad\qquad\qquad\qquad\qquad\qquad\qquad
		                    s_3(\alpha s_1 + \beta s_2 + \gamma s_3) \}, \text{ and} \\
 \Bcyr(X,\BB') &= \left\{ 1,\, t_3,\, \frac{t_3^2}{2\gamma},\, t_2,\, \frac{t_2^2}{2\beta},\, t_1,\, 
                    \frac{t_1^2}{2\alpha} \right\}.          
\end{align*}

\end{Example}
\begin{Example}
\label{Example:PnotInverseSystem}
 Let $N\ge 3$ be an integer. Let $X_N=(x_1,\ldots, x_N)$ be a list of vectors in general position in $\R^2$
 with $x_1=e_1$, $x_2=e_2$, and
 $x_3=e_1+e_2$. 
 In addition, we suppose that the second coordinate of all vectors
 $x_i$ ($i\ge 3$) is one.
 Let  $\BB' := \{ (x_1,x_i) : i\in \{ 2,\ldots, N\}\} \cup \{ (x_2,x_3) \}$.
 Note that $\BB'$ is %totally 
 unimodular, \ie all elements have determinant $1$ or $-1$ and $\BB'$ has the forward exchange
 property. 
 Then,
 \begin{align}
   \Dcal(X_N,\BB')  &= \ker\ideal \{ p_{x_1x_2}, p_{x_1x_3}, p_{x_2\cdots x_N} \} = \spa \{ 1, t_1,t_2, t_2^2, t_2^3, 
		  \ldots, t_2^{N-2} \},  \nonumber \\
   \Bcyr(X_N,\BB') &= \left\{ 1,t_1,t_2, \frac{t_2^2}{2}, \ldots, \frac{t_2^{N-2}}{(N-2)!}   \right\},\text{ and} \\
   \Bcal(X_N,\BB')  &= \{1, p_{x_1} \} \cup \{   p_{x_2\cdots x_i } : i\in \{ 2,\ldots, N-1  \} \}.
 \end{align}
 \end{Example}

Now we embark on the proof of
Theorem~\ref{Theorem:MainTheoremGeneral}.
We start with the following simple lemma.
\begin{Lemma}[Inclusion]
\label{Lemma:ContainmentGeneral}
 \XBBintroFEP
Then
\begin{align}
  \Bcyr(X,\BB')\subseteq \Dcal(X,\BB').
\end{align}
\end{Lemma}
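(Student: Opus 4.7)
The plan is to mimic the structure of the central-case inclusion (Lemma~\ref{Lemma:ContainmentCentral}): reduce via the annihilation criterion (Lemma~\ref{Lemma:AnnihilationCondition}) to a combinatorial statement, then invoke the forward exchange property in place of the ``cocircuit equals complement of a hyperplane'' step available in the central case.

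Fix $B \in \BB'$ and a $\BB'$-cocircuit $C$, and set $Z := X \setminus E(B)$. I need to show $D_C\bigl(\det(B)\,R^B_Z\bigr)=0$. Since $D_C = D_{C\setminus Z}\cdot D_{C\cap Z}$ and $C\cap Z \subseteq Z$, Lemma~\ref{Lemma:AnnihilationCondition} (applied with $C\cap Z$ in place of $C$) reduces the problem to exhibiting an index $i \in [r]$ with $Z \cap (S_i^B \setminus S_{i-1}^B) \subseteq C$. Suppose for contradiction that no such $i$ exists. Then for each $i$ we may pick $y_i \in Z \cap (S_i^B \setminus S_{i-1}^B)$ with $y_i \notin C$, and a straightforward induction on $i$ using $y_i \notin S_{i-1}^B$ shows that $B^* := \{y_1,\ldots,y_r\}$ is a basis of $X$; by construction it is disjoint from $C$. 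If we can show $B^* \in \BB'$, this contradicts the fact that every $\BB'$-cocircuit meets every basis of $\BB'$, and the proof is complete.

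The heart of the argument is therefore the following combinatorial claim: \emph{if $B \in \BB'$ and $y_1,\ldots,y_r \in X$ satisfy $y_i \in (S_i^B \setminus S_{i-1}^B) \setminus E(B)$ for every $i$, then $\{y_1,\ldots,y_r\} \in \BB'$.} I would prove this by induction on $d := |\{i : b_i \neq y_i\}|$, where $B = (b_1,\ldots,b_r)$ is ordered as inherited from $X$. In the inductive step, pick some $i$ with $b_i \neq y_i$ and apply the forward exchange property at position $i$ (using $y_i \in S_i^B \setminus (S_{i-1}^B \cup E(B))$) to obtain $B_1 := (B \setminus b_i) \cup y_i \in \BB'$, which strictly decreases $d$. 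In the favourable sub-case $b_{i-1} < y_i < b_{i+1}$, the remark following the definition of the forward exchange property shows that the flag is preserved under the swap, and a direct check confirms that the remaining $y_j$'s still lie outside $E(B_1)$, so the inductive hypothesis applies to $B_1$ and the same tuple $(y_1,\ldots,y_r)$.

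The main obstacle is the unfavourable sub-case, in which $y_i$ does not fall into position $i$ of $B_1$ under the order inherited from $X$: the flag $S_\bullet^{B_1}$ then genuinely differs from $S_\bullet^B$, and $E(B_1)$ may differ from $E(B)$, so one cannot simply re-apply the inductive hypothesis to the same tuple. One must either re-index $(y_1,\ldots,y_r)$ by the new flag of $B_1$ and verify that the permuted tuple still meets the flag and activity conditions with respect to $B_1$, or choose the index $i$ of the exchange more cleverly (e.g.\ as the largest index where $b_i \neq y_i$) so as to always stay in the favourable sub-case. Carefully tracking how the layers of the flag and the external-activity set transform under a single forward exchange---in particular, ruling out that any still-to-be-swapped $y_j$ becomes externally active with respect to $B_1$---will be the most delicate bookkeeping in the argument.
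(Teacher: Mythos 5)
Your high-level strategy matches the paper's exactly: choose, for each $i$, an element $z_i \in (S_i^B \setminus S_{i-1}^B) \cap (X \setminus (E(B) \cup C))$, show that $B_r := \{z_1,\ldots,z_r\} \in \BB'$ by a chain of single forward exchanges, and derive a contradiction with $C$ being a $\BB'$-cocircuit. You also correctly isolate the crux: after one exchange the flag may change, so you cannot blindly iterate. But you leave that crux unresolved, and the fix you tentatively float is in the wrong direction.

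The paper's resolution is to perform the exchanges in \emph{increasing} order of $i$, setting $B_0 := B$ and $B_i := (B_{i-1}\setminus b_i)\cup z_i$. The observation that makes this work, and which your proposal is missing, is the following bookkeeping fact: because $z_j \le b_j < b_i$ for all $j<i$ (the $z_j$ are not externally active), the $i-1$ smallest elements of $B_{i-1}$ in the order inherited from $X$ are precisely $z_1,\ldots,z_{i-1}$, and they span $S_{i-1}^B$; the $i$th element of $B_{i-1}$ is still $b_i$; hence $S_{i-1}^{B_{i-1}} = S_{i-1}^{B}$, $S_{i}^{B_{i-1}} = S_{i}^{B}$, and
\begin{align}
 (S_{i}^{B_{i-1}} \setminus S_{i-1}^{B_{i-1}}) \cap (X\setminus E(B_{i-1}))
     = (S_{i}^{B} \setminus S_{i-1}^{B}) \cap (X\setminus E(B)).
\end{align}
This is exactly what is needed to invoke the forward exchange property at slot $i$ of $B_{i-1}$ with the same element $z_i$. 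In other words, with the small-to-large order the relevant layer of the flag and the relevant piece of the external-activity set are literally unchanged at each step, so no re-indexing of the tuple is needed. Your suggested alternative of exchanging at the \emph{largest} index with $b_i \ne y_i$ does not have this invariance: if $y_i$ is small in the $X$-order, it slides down into an early slot of $B_1$, the whole flag shifts, and the remaining $y_j$'s need not sit in the layers of the new flag that you would like (nor is it clear they remain inactive). You would then be forced into exactly the delicate re-indexing you flag as problematic. So the gap is not cosmetic: the missing content is the choice of increasing order together with the displayed invariance, which is the actual heart of the argument.
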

\begin{proof}% 
 Let $B\in \BB'$ and let $\abs{\det(B)} R^B_{X\setminus E(B)}\in\Bcyr(X,\BB')$ be the corresponding basis element.
 Let $C \subseteq X$ be a $\BB'$-cocircuit, \ie an inclusion-minimal subset of $X$ that intersects every basis in $\BB'$.

Let $B=(b_1,\ldots, b_r)$.
 If there exists an $i$ \st $(S_i^B\setminus S_{i-1}^B) \cap (X \setminus E(B)) \subseteq C$, 
  we are done by Lemma~\ref{Lemma:AnnihilationCondition}.
 Now suppose that this is not the case, \ie for every $i\in[r]$, there
 is a $z_i \in (S_i^B\setminus S_{i-1}^B) \cap (X \setminus (E(B) \cup C) )$.
 Then we define a sequence of bases $B_0,\ldots, B_r$ by
\begin{align}
 B_0 := B \quad\text{and}\quad B_{i} := (B_{i-1} \setminus b_i) \cup z_{i} \text{ for } i\in [r]. 
\end{align}
 The lists $B_i$ are indeed bases and even though in general, they might define different flags, they satisfy
\begin{align} 
 (S_{i}^{B_{i-1}} \setminus S_{i-1}^{B_{i-1}}) \cap (X\setminus E(B_{i-1}))
     = (S_{i}^{B} \setminus S_{i-1}^{B}) \cap (X\setminus E(B)),  
\end{align}
because $\spa(b_1,\ldots, b_i)=\spa(z_1,\ldots, z_i)$ for all $i\in [r]$. Hence, $B_i\in \BB'$ implies 
 $B_{i+1}\in \BB'$
because 
 $\BB'$ has the forward exchange property.
 In particular, $B_r=(z_1,\ldots, z_r)\in \BB'$. By construction, $B_r\cap C= \emptyset$. 
This is a contradiction.
\end{proof}
\begin{Definition}
Let $(A,\BB)$ be a matroid and let $\BB'\subseteq \BB$. % 
Let $x\in A$.
$\BB'$ can be partitioned as $\BB'=\BB_{\setminus x} \cup \BB_{| x}$, where
\begin{align}
\BB'_{\setminus x}&:=\{B\in \BB' : x\not\in B\} \text{ denotes the \emph{deletion} of $x$ and}\\
\BB'_{|x}
&:=\{ B\in \BB' : x\in B\} \text{ the \emph{restriction} to $x$.}
\end{align}
If we are given a list of vectors $X\subseteq U$ and a set of bases $\BB'\subseteq \BB(X)$, we can also define the 
 \emph{contraction} $\BB'_{ /x}$.
Recall that $\pi_x : U\to U/x$ denotes the canonical projection.
 Then, we define
\begin{align}
 \BB'_{/x } := \{ \pi_x(B\setminus x) : x\in B \in \BB'_{|x}\}.
\end{align}
\end{Definition}

\begin{Remark}
 For technical reasons, it is helpful to distinguish the contraction $\BB'_{/x}$ and 
 the restriction $\BB'_{|x}$ although there is a canonical bijection between both sets.
\end{Remark}

We now introduce the concept of placibility.
 This is a condition on a set of bases $\BB'$ which implies equality in 
\eqref{eq:DspaceDimInequalitySimple}. 
\begin{Definition}[{\cite{boor-ron-shen-1996-I}, see also \cite{li-ron-2013}}]
Let $(A,\BB)$ be a matroid and let $\BB'\subseteq \BB$ be a non-empty set of bases.
\begin{enumerate}[(i)]
 \item 
We call an element $x\in A$  \emph{placeable} in
$\BB'$ if for each $B\in \BB'$, there exists an
element $b\in B$ such that $ ( B \setminus b) \cup x \in \BB'$.
 \item 
 We say that $\BB'$ is \emph{placible}
if one of the following two conditions holds:
\begin{enumerate}
\item $\BB'$ is a singleton or
\item there exists $x\in A$ \st $x$ is placeable in $\BB'$ and 
 both, $\BB'_{|x}$ and $\BB'_{\setminus x}$ are non-empty
and placible.
\end{enumerate}
\end{enumerate}
\end{Definition}
\begin{Proposition}[\cite{boor-ron-shen-1996-I}]
\label{Proposition:PlacibleimpliesDdimGood}
 \XBBintro
If $\BB'$ is 
placible, then $\dim \Dcal(X,\BB')= \abs{\BB'}$.
\end{Proposition}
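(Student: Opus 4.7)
The plan is to proceed by induction on $|\BB'|$, using the recursive structure of placibility together with the differential operator $D_x$ to relate $\Dcal(X, \BB')$ to the smaller spaces $\Dcal(X/x, \BB'_{/x})$ and $\Dcal(X \setminus x, \BB'_{\setminus x})$. The lower bound $\dim \Dcal(X,\BB') \ge |\BB'|$ is already supplied by Proposition~\ref{Prop:DdimSimpleBound}, so it suffices to establish the matching upper bound. In the base case $|\BB'| = 1$, writing $\BB' = \{B\}$, the $\BB'$-cocircuits are precisely the singletons $\{b\}$ for $b \in B$; since $B$ spans $U$, the ideal $\Jcal(X,\BB')$ is the maximal graded ideal and $\Dcal(X,\BB')$ reduces to the constants, which has dimension $1 = |\BB'|$.

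For the inductive step with $|\BB'| \ge 2$, I would use placibility to pick $x \in X$ that is placeable in $\BB'$ and such that both $\BB'_{|x}$ and $\BB'_{\setminus x}$ are non-empty and placible. The idea is then to analyse the linear map $D_x : \Dcal(X,\BB') \to \sym(V)[1]$ and prove two containments, which via rank-nullity yield
\begin{align*}
 \dim \Dcal(X,\BB') \le \dim \Dcal(X / x,\BB'_{/x}) + \dim \Dcal(X \setminus x,\BB'_{\setminus x}).
\end{align*}
The image containment $D_x(\Dcal(X,\BB')) \subseteq \Dcal(X\setminus x, \BB'_{\setminus x})$ is routine: for any $\BB'_{\setminus x}$-cocircuit $C$, the set $C \cup \{x\}$ intersects every basis in $\BB'$ (bases containing $x$ are met by $x$; those not containing $x$ are met by $C$), so $C \cup \{x\}$ contains a $\BB'$-cocircuit and $D_C(D_x f) = D_{C \cup \{x\}} f = 0$ for every $f \in \Dcal(X,\BB')$.

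The kernel containment $\ker(D_x) \cap \Dcal(X,\BB') \subseteq \Dcal(X / x, \BB'_{/x})$ (using $j_x$ to view the right-hand side inside $\sym(V)$) is where placeability is essentially invoked. Given $f \in \Dcal(X,\BB')$ with $D_x f = 0$, one has $f \in \sym(x^\ann) \cong \sym((U/x)^*)$, and I would need to check $D_C f = 0$ for every $\BB'_{/x}$-cocircuit $C \subseteq X/x$. Lifting $C$ to $\tilde C \subseteq X \setminus x$ through the list bijection, the key point is that $\tilde C$ still meets every basis in $\BB'$: for $B \in \BB'_{|x}$, the projected basis $\pi_x(B \setminus x) \in \BB'_{/x}$ is hit by $C$, so $\tilde C \cap (B\setminus x) \neq \emptyset$; for $B \in \BB'_{\setminus x}$, placeability of $x$ furnishes some $b \in B$ with $(B \setminus b) \cup x \in \BB'_{|x}$, hence $\pi_x(B\setminus b) \in \BB'_{/x}$ is hit by $C$ and $\tilde C \cap (B \setminus b) \neq \emptyset$. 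Thus $\tilde C$ contains a $\BB'$-cocircuit $D$, so $D_D f = 0$ gives $D_{\tilde C} f = 0$, which under the identification $\sym(x^\ann) \cong \sym((U/x)^*)$ is exactly $D_C f = 0$.

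To close the induction, the bijection $B \leftrightarrow \pi_x(B\setminus x)$ gives $|\BB'_{/x}| = |\BB'_{|x}|$ and transfers placibility of $\BB'_{|x}$ to placibility of $\BB'_{/x}$, so the induction hypothesis applies to both $\Dcal(X/x, \BB'_{/x})$ and $\Dcal(X\setminus x, \BB'_{\setminus x})$ (both strictly smaller than $\BB'$), yielding $|\BB'_{|x}| + |\BB'_{\setminus x}| = |\BB'|$. Combined with Proposition~\ref{Prop:DdimSimpleBound}, equality follows. The main obstacle is the kernel containment: without placeability of $x$, a lift $\tilde C$ of a $\BB'_{/x}$-cocircuit need not hit every basis in $\BB'_{\setminus x}$, so the recursive reduction collapses. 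Everything else amounts to bookkeeping with cocircuits and the compatibility of the pairing with deletion and contraction.
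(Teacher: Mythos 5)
The paper itself gives no proof of this proposition---it is cited verbatim from \cite{boor-ron-shen-1996-I}, whose exact-sequence machinery is also the source of Propositions~\ref{Theorem:ExactSequencesCentralD} and~\ref{Prop:ExactSequenceDgeneralCondition}. Your argument is the natural one and is, modulo one small gap discussed below, correct: you reduce to the upper bound, induct on $\abs{\BB'}$, and do a rank--nullity count for $D_x\colon\Dcal(X,\BB')\to\sym(V)$, where placeability of $x$ is exactly what makes the kernel containment go through. The base case and the image containment are right, and combining the inequality with Proposition~\ref{Prop:DdimSimpleBound} closes the induction.

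The one place where you assert more than you argue is the claim that the bijection $B\leftrightarrow\pi_x(B\setminus x)$ ``transfers placibility of $\BB'_{|x}$ to placibility of $\BB'_{/x}$.'' This is true but not completely automatic: you need that, in the recursive witness for placibility of $\BB'_{|x}$, the chosen element $y$ is never $x$ (impossible since $(\BB'_{|x})_{\setminus x}=\emptyset$) nor parallel to $x$ (impossible since some basis in $\BB'_{|x}$ then contains both $x$ and $y$), so that $\pi_x(y)$ is a non-loop in $X/x$ and the recursion descends. An easier route, which sidesteps the contraction entirely, is to observe that $\{x\}$ is a $\BB'_{|x}$-cocircuit and that your placeability argument shows every $\BB'_{|x}$-cocircuit $C$ with $x\notin C$ already contains a $\BB'$-cocircuit; hence $\ker(D_x)\cap\Dcal(X,\BB')\subseteq\Dcal(X,\BB'_{|x})$. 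You can then apply the inductive hypothesis directly to the pair $(X,\BB'_{|x})$ (placibility of $\BB'_{|x}$ is given by the definition and $\abs{\BB'_{|x}}<\abs{\BB'}$), avoiding any discussion of $X/x$. Either repair is short, but one of them is needed.
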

\begin{Lemma}% 
\label{Lemma:SFPimpliesPlacible}
Let $(A,\BB,\BB')$ be a forward exchange matroid.
Then 
$\BB'$ is  placible.
\end{Lemma}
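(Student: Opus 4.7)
The plan is to proceed by induction on $|\BB'|$. The base case $|\BB'|=1$ is immediate from the definition of placible.

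For $|\BB'|\ge 2$, I will take $x^* \in A$ to be the maximum element such that $\BB'_{|x^*}$ and $\BB'_{\setminus x^*}$ are both non-empty; this element exists because two distinct bases of $\BB'$ must differ in at least one coordinate. By maximality of $x^*$, every $y > x^*$ lies either in every basis of $\BB'$ or in none; let $C$ denote the set of those above $x^*$ that are always present. Every $B \in \BB'$ then decomposes as $B = C \cup B_0$ with $B_0 \subseteq \{y \in A : y \le x^*\}$, and in every $B \in \BB'_{|x^*}$ the element $x^*$ occupies position $r-|C|$ in the sorted basis.

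Next I will establish three claims: (a) $\BB'_{\setminus x^*}$ inherits the forward exchange property (so the inductive hypothesis on $|\BB'|$ gives its placibility); (b) $\BB'_{|x^*}$ is placible; and (c) $x^*$ is placeable in $\BB'$. For (a) the key fact is that $x^* \in E(B)$ for every $B \in \BB'_{\setminus x^*}$: if not, FEP would produce an exchange $(B\setminus b_i) \cup x^* \in \BB'$ for some index $i$, but indices $i \le r-|C|$ would place all of $b_1,\dots,b_i$ inside $B_0$, forcing $x^* \in \clos(B_0) = \clos\{b \in B : b \le x^*\}$ and so $x^* \in E(B)$, contradicting the hypothesis; while $i > r-|C|$ forces $b_i \in C$, contradicting the definition of $C$. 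Hence $x^*$ is never a valid FEP candidate from a basis of $\BB'_{\setminus x^*}$, so all FEP exchanges stay inside this subset. For (b) I will pass to the contracted matroid $(A \setminus x^*, \BB/x^*)$ and the image $\widetilde\BB' = \{B \setminus x^* : B \in \BB'_{|x^*}\}$; using that FEP exchanges at indices $i < r-|C|$ preserve $x^*$, together with the flag identity $F_i \cap \clos_A(B\setminus b_i) = F_{i-1}$ (where $F_i$ denotes the flag in the contracted matroid), I will show that $\widetilde\BB'$ inherits FEP there. A secondary induction on $|A|$, or a joint induction on $|A|+|\BB'|$, then yields placibility of $\widetilde\BB'$ and hence of $\BB'_{|x^*}$.

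Finally, for (c), I will fix $B \in \BB'_{\setminus x^*}$ and any $B^{**} \in \BB'_{|x^*}$; both contain $C$. Strong basis exchange applied to $x^* \in B^{**} \setminus B$ supplies some $b \in B \setminus B^{**}$ (necessarily $b<x^*$) such that $(B \setminus b) \cup x^* \in \BB$, and I will realise this basis inside $\BB'$ as the terminus of a chain of forward exchanges starting from $B^{**}$, each acting at a position below $r-|C|$ and hence preserving $x^*$ throughout. The main obstacles will be the flag-identity bookkeeping in (b) and the explicit construction of the exchange chain in (c); both hinge on the fact that FEP only allows order-decreasing swaps and that $x^*$ sits in a rigid position above all the ``movable'' coordinates of any basis in $\BB'_{|x^*}$.
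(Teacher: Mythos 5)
The choice of extremal element is wrong: you take $x^*$ to be the \emph{maximum} element of $A$ with $\BB'_{|x^*}$ and $\BB'_{\setminus x^*}$ both non-empty, whereas the forward exchange property is inherently asymmetric (a forward exchange replaces $b_i$ by a \emph{smaller} element), so the correct choice is the \emph{minimum} such element. You actually detect the obstruction yourself in claim (a): you prove $x^*\in E(B)$ for every $B\in\BB'_{\setminus x^*}$, which means FEP gives no direct handle on producing the basis $(B\setminus b)\cup x^*$ from such a $B$. The hoped-for recovery in (c) via strong basis exchange plus a ``chain of forward exchanges'' cannot work, because $x^*$ simply need not be placeable.

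A concrete counterexample: in $\R^2$ take $X=(x_1,x_2,x_3,x_4)$ with $x_1=e_1$, $x_2=e_2$, $x_3=e_1+e_2$, $x_4=e_1+2e_2$ (all pairwise independent), and
\begin{align*}
\BB' = \{(x_1,x_2),\,(x_1,x_3),\,(x_2,x_3),\,(x_3,x_4)\}.
\end{align*}
One checks that $\BB'$ has the forward exchange property: the only non-trivial constraints come from $(x_3,x_4)$ (forcing $(x_1,x_3),(x_2,x_3)\in\BB'$ since $E(x_3,x_4)=\emptyset$) and from $(x_1,x_3),(x_2,x_3)$ (forcing $(x_1,x_2)\in\BB'$; note $x_4\in E(x_1,x_3)\cap E(x_2,x_3)$ so $x_4$ is excluded from those exchanges). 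Here the maximum element with both restriction and deletion non-empty is $x^*=x_4$, since $\BB'_{|x_4}=\{(x_3,x_4)\}$ and $\BB'_{\setminus x_4}$ is the rest. But $x_4$ is \emph{not} placeable in $\BB'$: for $B=(x_1,x_2)$, neither $(x_1,x_4)$ nor $(x_2,x_4)$ lies in $\BB'$. The paper instead takes the minimum such element, here $x_1$, and shows by a minimality argument that $x_1$ is never externally active with respect to any basis in $\BB'$ (if $x\in S_i^B\setminus S_{i-1}^B$ with $x>b_i$, minimality of $x$ would force $b_1,\ldots,b_i$ into every basis of $\BB'$, so no basis could contain $x$, contradicting $\BB'_{|x}\neq\emptyset$); then FEP immediately yields placeability. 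With the wrong extremum your steps (b) and (c) cannot be repaired, so the whole strategy must be reworked around the minimum.
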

\begin{proof}
If $\abs{\BB'}=1$, then $\BB'$ is placible by definition. 
Now let $\abs{\BB'} \ge 2$. 
Let 
 $x$ be the minimal element in $A$ \st both, $\BB'_{|x}$ and $\BB'_{\setminus x}$ are non-empty.
 Such an element must exist if $\abs{\BB'} \ge 2$. 

 We now show  that $x$ is placeable in $\BB'$.
 Let $B=(b_1,\ldots, b_r)$ be a basis in $\BB'$ and let $i\in [r]$ \st $x\in S_i^B\setminus S_{i-1}^B$.
 We claim that $x\le b_i$. Suppose it is not. Because of the minimality of $x$, 
 this implies that $b_1,\ldots, b_i$ are contained in all bases in $\BB'$.
 Since $x\in\spa(b_1,\ldots, b_i)$, this implies that $x$ is not contained in any basis. This is a contradiction
 because we assumed that $\BB'_{|x}$ is non-empty.

Now we have established that $x\le b_i$.
 This implies that $x$ is not externally active. 
Hence, because of the forward exchange property, $ ( B \setminus b_i ) \cup x  \in \BB'$, \ie $x$ is placeable in $B\in\BB'$.

It remains to be shown that 
 $\BB'_{\setminus x}$ and $\BB'_{/x}$ are both placible.
 By induction, it is sufficient to show that both sets have the forward exchange property.
 For $\BB'_{\setminus x}$, this is clear.
 For $\BB'_{|x}$, this follows from the following  fact: 
 by the choice of $x$, 
all $a\in A$ that satisfy $a < x$ are either contained in all bases or in no basis in  $ \BB'_{|x}$.
\end{proof}
\begin{proof}[Proof of Theorem~\ref{Theorem:MainTheoremGeneral}]
By Lemma~\ref{Lemma:ContainmentGeneral}, $\Bcyr(X,\BB')\subseteq \Dcal(X,{\BB'})$ holds.
By Proposition~\ref{Proposition:PlacibleimpliesDdimGood} and by Lemma~\ref{Lemma:SFPimpliesPlacible}, 
 $\dim \Dcal(X,{\BB'}) = \abs{\BB'} = \abs{\Bcyr(X,\BB')}$. 
Linear independence of $\Bcyr(X,\BB')$ 
 is clear because it is a subset of $\Bcyr(X)$.  
For the same reason, the duality with $\Bcal(X,\BB')\subseteq \Bcal(X)$ follows from
Lemma~\ref{Lemma:BaseDualityCentral}.
\end{proof}

\begin{Remark}
 The correspondence between $\Dcal(X)$ and the set of vertices $S$ of a  
 hyperplane arrangement $\Hcal(X,c)$ in general position that is stated in 
 Theorem~\ref{Theorem:HyperplaneArrangementD} generalises in a straightforward way
 to a correspondence between $\Dcal(X,\BB')$ and
 the subset of $S$ that is defined by $\BB'$.
\end{Remark}

\subsection{Deletion-contraction and exact sequences} 
In this subsection, we show that the  results in 
 Section~\ref{Section:DelConExactSequencesCentral} about deletion-contraction and exact sequences 
 naturally extend to generalised $\Dcal$-spaces and $\Pcal$-spaces.
 We use the same terminology as in that section.

Recall that
for a graded vector space $S$, we write
$S[1]$ for the vector space  with the degree shifted up by one.
Let $(A,\BB,\BB')$ be a forward exchange matroid. If $\abs{\BB'} \ge 2$,
there is an element $x\in A$ that is neither a loop nor a coloop, \ie 
$\BB'_{|x}$ and $\BB'_{\setminus x}$ are both non-empty.
\begin{Proposition}
\label{Prop:ExactSequenceDgeneralCondition}
\XBBintroFEP
Let $x$ be the minimal element of $X$ that is neither a loop nor a coloop.
Then, the following sequence of graded vector spaces is exact:% 
 \begin{align}
  0 \to \Dcal(X / x, \BB'_{/x}) \stackrel{j_x}{\to} \Dcal(X,\BB') 
       \stackrel{D_x}\to \Dcal(X\setminus x,\BB'_{\setminus x})[1] \to 0. 
 \end{align}
\end{Proposition}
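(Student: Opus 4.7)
The plan is to prove exactness by combining (i) a direct check that the two maps are well-defined, (ii) an explicit computation showing that $D_x$ sends the $\BB'_{\setminus x}$-part of the basis $\Bcyr(X,\BB')$ bijectively onto $\Bcyr(X\setminus x,\BB'_{\setminus x})$, and (iii) a dimension count furnished by Theorem~\ref{Theorem:MainTheoremGeneral}. Injectivity of $j_x$ and the equation $D_x\circ j_x=0$ are immediate, as $j_x$ is induced by the inclusion $\sym(x^\ann)\hookrightarrow\sym(V)$ and $D_x$ annihilates every element of $\sym(x^\ann)$.

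To see that $D_x$ carries $\Dcal(X,\BB')$ into $\Dcal(X\setminus x,\BB'_{\setminus x})$, take a $\BB'_{\setminus x}$-cocircuit $C\subseteq X\setminus x$. The enlargement $C\cup\{x\}$ meets every basis in $\BB'$: bases in $\BB'_{\setminus x}$ via $C$ and bases in $\BB'_{|x}$ via $x$. It therefore contains a $\BB'$-cocircuit $D$, and for $f\in\Dcal(X,\BB')$ one has $D_CD_xf=D_{C\cup x}f=\pm D_{(C\cup x)\setminus D}D_Df=0$. Well-definedness of $j_x$ is analogous: any $\BB'_{/x}$-cocircuit lifts through $\pi_x$ to a subset of $X$ which (together with the loops mapped to $0$) contains a $\BB'$-cocircuit.

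Surjectivity of $D_x$ will come from two observations. First, the minimality of $x$ among non-loop, non-coloop elements forces $x$ to lie outside the closure of any set of smaller coloops: if $x\in\clos\{\text{coloops}<x\}$ then $x$ would be dependent on elements common to every basis, contradicting the non-loop hypothesis. Consequently, for every $B\in\BB'_{\setminus x}$, $x$ is not externally active with respect to $B$, so $E_X(B)=E_{X\setminus x}(B)$. Second, if $x\in Z$ but $x\notin B$, then $x$ lies in exactly one block $P_i\cup N_i$ of the partition used to build $R^B_Z$, and the identities $D_xT_x=\delta_0$ and $D_xT_{-x}=-\delta_0$ yield, after a short sign-tracking computation on the factor $R_i^B=T_i^{B+}-T_i^{B-}$, the clean identity $D_xR^B_Z=R^B_{Z\setminus x}$. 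Applied to the basis elements of $\Bcyr(X,\BB')$ indexed by $B\in\BB'_{\setminus x}$, this gives
\[
   D_x\bigl(\det(B)\,R^B_{X\setminus E_X(B)}\bigr)=\det(B)\,R^B_{(X\setminus x)\setminus E_{X\setminus x}(B)},
\]
which is the corresponding basis element of $\Bcyr(X\setminus x,\BB'_{\setminus x})$. Hence the image of $D_x$ contains a full basis of $\Dcal(X\setminus x,\BB'_{\setminus x})$.

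To close the argument I would verify that $\BB'_{\setminus x}$ and $\BB'_{/x}$ inherit the forward exchange property (the latter by the same case analysis used in the proof of Lemma~\ref{Lemma:SFPimpliesPlacible}), apply Theorem~\ref{Theorem:MainTheoremGeneral} to obtain $\dim\Dcal(X\setminus x,\BB'_{\setminus x})=\abs{\BB'_{\setminus x}}$ and $\dim\Dcal(X/x,\BB'_{/x})=\abs{\BB'_{|x}}$, and note the partition identity $\abs{\BB'}=\abs{\BB'_{|x}}+\abs{\BB'_{\setminus x}}$. Combined with $\image(j_x)\subseteq\ker(D_x)$ and the surjectivity just established, rank--nullity forces $\image(j_x)=\ker(D_x)$, giving exactness at the middle term. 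The main obstacle is the sign-tracking derivation of $D_xR^B_Z=R^B_{Z\setminus x}$---carrying the parities $(-1)^{\abs{P_i}}$ and $(-1)^{\abs{N_i}}$ correctly through the definitions of $T_i^{B\pm}$---together with verifying that the forward exchange property descends cleanly to both $\BB'_{\setminus x}$ and $\BB'_{/x}$; each is routine but requires careful bookkeeping.
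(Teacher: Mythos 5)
Your proof is correct and follows essentially the same route as the paper: the key step---showing that $D_x$ sends the basis elements $\det(B)R^B_{X\setminus E(B)}$ (for $B\in\BB'_{\setminus x}$) bijectively onto $\Bcyr(X\setminus x,\BB'_{\setminus x})$ via the identity $D_xR^B_Z=R^B_{Z\setminus x}$---is exactly what the paper's proof does for surjectivity. Where the paper then refers the reader to \cite{boor-ron-shen-1996-I} for the remaining exactness, you make the argument self-contained with a dimension count via Theorem~\ref{Theorem:MainTheoremGeneral}, together with explicit cocircuit arguments for the well-definedness of $D_x$ and $j_x$; this is a reasonable and slightly more explicit way to close the gap, and your observation that the forward exchange property descends to $\BB'_{\setminus x}$ and $\BB'_{/x}$ (the latter by the reasoning already present in the proof of Lemma~\ref{Lemma:SFPimpliesPlacible}, noting that any $y<x$ is a loop or coloop so that $x$ is indeed the minimal element with both $\BB'_{|x}$ and $\BB'_{\setminus x}$ non-empty whenever the non-degenerate case arises) is the only point that needs some care.
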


\begin{proof}
Let $B\in \BB'$ be a basis that does not contain $x$. 
 Because of the minimality, $x$ is not externally active with respect to $B$.
This implies
$D_x R^B_{X\setminus E(B)} = R^B_{X\setminus (E(B)\cup x)}$. 
Hence, $D_x  : \{ \abs{ \det(B) } R^B \in\Bcyr(X,\BB') : x\not\in B \}  \to \Bcyr(X \setminus x, \BB'_{\setminus x})$ is a bijection
and consequently, $D_x$ maps $\Dcal(X,\BB')$ surjectively to $\Dcal(X\setminus x,\BB'_{\setminus x})$.
For the rest of the proof, we refer the reader to \cite{boor-ron-shen-1996-I}, in particular
to the explanations following (1.12)
 and to Theorem 2.16.
\end{proof}

\begin{Proposition}
\label{Prop:ExactSequencePgeneralCondition}
\XBBintroFEP
Let $x$ be the minimal element of $X$ that is neither a loop nor a coloop.
Then, the following sequence of graded vector spaces is exact:
   \begin{align}
    \label{equation:sequenceGeneralP}
   0 \to \Pcal(X\setminus x,\BB'_{\setminus x})[1] \stackrel{\cdot p_x}{\to} \Pcal(X,\BB') 
      \stackrel{\sym(\pi_x)}{\longrightarrow} \Pcal(X/x,\BB'_{/x}) \to 0.  
  \end{align}
\end{Proposition}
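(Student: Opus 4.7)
My plan is to deduce Proposition~\ref{Prop:ExactSequencePgeneralCondition} from Proposition~\ref{Prop:ExactSequenceDgeneralCondition} by dualisation, following the template of Remark~\ref{Remark:PDsequenceDuality}. Since dualisation of finite-dimensional vector spaces is a contravariant exact functor, Proposition~\ref{Prop:ExactSequenceDgeneralCondition} yields an exact sequence involving the duals $\Dcal(\cdot,\cdot)^{*}$, and Theorem~\ref{Theorem:MainTheoremGeneral} identifies these duals with the corresponding $\Pcal$-spaces via the isomorphism $q\mapsto \pair{q}{\cdot}$. The proof thus reduces to two pieces: (a) showing that $\cdot p_x$ and $\sym(\pi_x)$ are well-defined on the claimed domains and codomains, and (b) checking that two diagrams relating them to $(D_x)^*$ and $(j_x)^*$ commute.

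For (a), I would check the action on the canonical bases $\Bcal(X\setminus x,\BB'_{\setminus x})$ and $\Bcal(X,\BB')$. The key preliminary observation is that for every $B\in \BB'_{\setminus x}$, the element $x$ is not externally active with respect to $B$ in $(X,\BB(X))$: since $x$ is the minimal non-loop-non-coloop, every element of $B$ that is strictly below $x$ must be a coloop, and coloops (being contained in no circuit) cannot span a non-loop-non-coloop such as~$x$. Hence $E_X(B)=E_{X\setminus x}(B)$ and $p_x\cdot Q_B^{X\setminus x}=Q_B^X\in \Bcal(X,\BB')$, which settles well-definedness of $\cdot p_x$ and also its injectivity (images of basis elements are distinct basis elements of $\Pcal(X,\BB')$). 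For $\sym(\pi_x)$ I would check on $\Bcal(X,\BB')$: for $B\in\BB'$ with $x\notin B$, the preceding observation gives $x\in X\setminus (B\cup E_X(B))$, so $p_x$ divides $Q_B^X$ and $\sym(\pi_x)(Q_B^X)=0$; for $x\in B$ the claim is $\sym(\pi_x)(Q_B^X)=Q_{B'}^{X/x}$ with $B':=\pi_x(B\setminus x)\in\BB'_{/x}$.

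For (b), the adjoint identities $\pair{p_x q}{f}=\pair{q}{D_x f}$ and (after choosing a basis with $s_r=x$) $\pair{\sym(\pi_x)q}{f}=\pair{q}{j_x f}$ are exactly those already established in the proof of Remark~\ref{Remark:PDsequenceDuality} and require no new argument. Combined with (a), these identities show that the dualised $\Dcal$-sequence is transported, under the isomorphisms from Theorem~\ref{Theorem:MainTheoremGeneral}, to the sequence \eqref{equation:sequenceGeneralP}, whose exactness follows.

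The main obstacle I expect is the well-definedness statement $\sym(\pi_x)(Q_B^X)=Q_{B'}^{X/x}$ for $x\in B$. This reduces to proving that for every $y\in X\setminus (B\cup x)$, one has $y\in E_X(B)$ if and only if $\pi_x(y)\in E_{X/x}(B')$, so that $\pi_x(X\setminus (B\cup E_X(B)))=(X/x)\setminus (B'\cup E_{X/x}(B'))$. For $y>x$ this is immediate from the identity $\clos_{X/x}(S)=\pi_x(\clos_X(S\cup x))$ applied to $S=\{b\in B\setminus x:b\le y\}$. For $y<x$ one again exploits the minimality of $x$: such a $y$ is necessarily a loop or a coloop; loops are externally active in both matroids, while coloops belong to every basis and so never appear in $X\setminus B$.
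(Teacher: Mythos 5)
Your proposal is correct and follows the paper's sketched approach: the paper offers two one‑line hints (``check on bases'' or ``dualise from Proposition~\ref{Prop:ExactSequenceDgeneralCondition} as in Remark~\ref{Remark:PDsequenceDuality}''), and you carry out the dualisation route while simultaneously supplying the basis‑level well‑definedness checks that the first hint would require. Your key observations --- that minimality of $x$ forces every $b\in B$ with $b<x$ to be a coloop, hence $x\notin E_X(B)$ for $B\in\BB'_{\setminus x}$, and that $\sym(\pi_x)$ sends $Q_B^X$ to $Q_{\pi_x(B\setminus x)}^{X/x}$ via the compatibility of external activity with contraction --- are exactly the details the paper leaves to the reader, and they are argued correctly (including the implicit need to show that any $y$ parallel to $x$ lies in $E_X(B)$ so that no factor $p_{\pi_x(y)}=0$ appears).
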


\begin{proof}
One can easily check this 
  for the bases of the $\Pcal$-spaces.
Alternatively, it can be deduced from Proposition~\ref{Prop:ExactSequenceDgeneralCondition} using a duality argument
 as in the proof of Remark~\ref{Remark:PDsequenceDuality}.
\end{proof}

\begin{Remark}
The exact sequences in this section require $x$ to be minimal in contrast to the ones
 Section~\ref{Section:DelConExactSequencesCentral}, where $x$ can be any element that is neither a loop nor a coloop.
This reflects the fact that matroids have  an (unordered) ground set, while forward exchange
 matroids have an (ordered) ground list.
\end{Remark}

\begin{Remark}
One could replace $\Dcal(X/x,\BB'_{/x})$ by $\Dcal(X,\BB'_{|x})$ in 
 Proposition~\ref{Prop:ExactSequenceDgeneralCondition}.
 The analogous replacement  in 
 Proposition~\ref{Prop:ExactSequencePgeneralCondition}
 would be problematic.
  The reasons for that are explained in Section~\ref{Section:DelConExactSequencesCentral}.
\end{Remark}

\subsection{$\Pcal(X,\BB')$ as the kernel of a power ideal}
By now, we have seen that most of the results
 regarding  $\Dcal(X)$ and $\Pcal(X)$ that we stated earlier also hold for the generalised $\Dcal$-spaces and $\Pcal$-spaces.
The  only thing that is missing is a power ideal
$\Ical(X,\BB')$ \st 
 $\Pcal(X,\BB') = \ker \Ical(X,\BB')$. 
Unfortunately, such an ideal does not always exist.

In this section, we describe the natural
 candidate for this power ideal and we give an example where 
 its kernel is equal to $\Pcal(X,\BB')$ and one where it is not.
\begin{Definition}
 \XBBintro
 Recall that $V:=U^*$. 
We define a function $\kappa : V \to \N$ by
\begin{align}
 \kappa(\eta) &:= \max_{B\in\BB'} \abs{X\setminus (B\cup E(B)\cup \eta^\ann)} \\
 \text{and }\; \Ical(X,\BB') &:=  \ideal \{ p_\eta^{\kappa(\eta) +1} : \eta\in V\setminus  \{ 0 \}  \}.
\end{align}
\end{Definition}
\begin{Lemma}
\label{Lemma:ContainmentP}
 \XBBintro
Then
\begin{align}
\Pcal(X,\BB') \subseteq \ker\Ical(X,\BB').
\end{align}
\end{Lemma}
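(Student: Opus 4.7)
The plan is to verify, for each $B \in \BB'$ and each $\eta \in V \setminus \{0\}$, that the differential operator $p_\eta^{\kappa(\eta)+1}$ annihilates the basis element $Q_B = p_{X \setminus (B \cup E(B))}$. Since partial derivatives commute, any element of the form $q \cdot p_\eta^{\kappa(\eta)+1}$ of the ideal $\Ical(X,\BB')$ will then annihilate $Q_B$ as well. By the symmetry of the pairing $\pair{\cdot}{\cdot}$ (either factor may play the role of the differential operator), this yields $\pair{Q_B}{g} = 0$ for every $g \in \Ical(X,\BB')$, and $\Pcal(X,\BB') \subseteq \ker \Ical(X,\BB')$ then follows by linearity.

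To carry out the computation, I observe that for $\eta \in V$ the operator $p_\eta(\partial)$ is a first-order derivation on $\R[s_1,\ldots,s_r]$ satisfying $p_\eta(\partial)\, p_x = \eta(x)$ for every $x \in U$. Iterating the Leibniz rule $k$ times on $Q_B = \prod_{y \in Y} p_y$, where $Y := X \setminus (B \cup E(B))$, produces
\begin{align*}
p_\eta^k(\partial)\, Q_B \;=\; k! \sum_{\substack{S \subseteq Y \\ \abs{S} = k}} \Bigl( \prod_{y \in S} \eta(y) \Bigr) \prod_{y \in Y \setminus S} p_y,
\end{align*}
where $Y$ is treated throughout as a labeled multiset so as to account correctly for any repetitions in $X$. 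A summand is nonzero only if $\eta(y) \neq 0$ for every $y \in S$, i.e.\ only if $S \subseteq Y \setminus \eta^\ann$; hence the entire sum vanishes identically as soon as $k > \abs{Y \setminus \eta^\ann}$.

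It remains to observe that $\abs{Y \setminus \eta^\ann} = \abs{X \setminus (B \cup E(B) \cup \eta^\ann)} \le \kappa(\eta)$ by the very definition of $\kappa$ as a maximum over $\BB'$. Taking $k = \kappa(\eta)+1$ therefore forces $p_\eta^{\kappa(\eta)+1}(\partial)\, Q_B = 0$, which completes the argument. The proof is essentially a direct computation and I do not anticipate any serious obstacle; the only subtle point is making sure the combinatorics of the Leibniz expansion remains correct when the list $X$ has repeated vectors, which is why $Y$ should be regarded as a labeled multiset rather than as its underlying set.
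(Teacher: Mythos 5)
Your proof is correct and takes essentially the same route as the paper: both reduce to showing that $p_\eta^{\kappa(\eta)+1}(\partial)$ annihilates each $Q_B = p_{X\setminus(B\cup E(B))}$ by the Leibniz rule combined with $\abs{X\setminus(B\cup E(B)\cup\eta^\ann)} \le \kappa(\eta)$. The paper packages the Leibniz step by first factoring out $p_{(X\cap\eta^\ann)\setminus(B\cup E(B))}$ (which $D_\eta$ kills) and then applying a degree argument to the remaining factor, whereas you expand the $k$-fold Leibniz rule fully and note that every size-$(\kappa(\eta)+1)$ subset $S\subseteq Y$ must meet $\eta^\ann$; these are the same computation seen from two angles.
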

\begin{proof}
It is sufficient to show that all elements of the basis $\Bcal(X,\BB')$ are contained in $\ker\Ical(X,\BB')$.
 Let $B\in \BB'$ and  let $\eta \in V \setminus \{0\} $.
Then
 \begin{align}
   D_\eta^{\kappa(\eta)} p_{X\setminus (B\cup E(B)) } 
   &=
   p_{(X \cap \eta^\ann) \setminus ( B\cup E(B))} D_\eta^{\kappa(\eta) + 1 } p_{X\setminus (B\cup E(B) \cup \eta^\ann) } 
   = 0
 \end{align}
   The first equality follows from Leibniz's law.
   The second  follows from the fact that by definition, $\kappa(\eta) \ge \abs{X\setminus (B\cup E(B) \cup \eta^\ann)} + 1$.
\end{proof}
\begin{Remark}
If one examines the proof of Lemma~\ref{Lemma:ContainmentP}, 
 one immediately sees that $\Ical(X,\BB)$ is the only power ideal for which
 $\Pcal(X,\BB') = \ker\Ical(X,\BB')$ can possibly hold.
\end{Remark}
\begin{Remark}
\label{Remark}
In some
cases, $\Pcal(X,\BB')$ and $\ker\Ical(X,\BB')$ are equal (see Example~\ref{Example:GeneralSpacesIideal}). 
In other cases however, $\Pcal(X,\BB')$ is not even closed
 under differentiation (see Example~\ref{Example:PnotInverseSystem}). 
\end{Remark}
Remark~\ref{Remark}
naturally leads to the following question.
\begin{Question}
 Is there a simple criterion to decide 
 whether $\Pcal(X,\BB')$ is closed under differentiation or 
 if $\Pcal(X,\BB') = \ker\Ical(X,\BB')$ holds?
\end{Question}
\begin{Example}
\label{Example:GeneralSpacesIideal}
This is a continuation of Example \ref{Example:GeneralSpaces}.
Recall that we considered 
the list $X=(e_1,e_2,e_3,a,b)\subseteq \R^3$ where $a$ and $b$ are generic vectors and 
$a=(\alpha, \beta,\gamma)$ with $\alpha,\beta,\gamma\neq 0$.
The set of bases is
\begin{align*}
 \BB'=\{(e_1e_2e_3),\,(e_1e_2a),\,(e_1e_2b),\, (e_1e_3a),\, (e_1e_3b),\, (e_2e_3a),\, (e_2e_3b)\}\subseteq\BB(X).
\end{align*}
In order to calculate the function $\kappa$, we first determine  
 the inclusion-maximal lists in $\{ X\setminus (B\cup E(B)) : B\in \BB\}$.
 Those are $(e_1a)$, $(e_2a)$, and $(e_3a)$. We can deduce that $\kappa(\eta)$ is 
 one if $\eta \in a^\ann$ and two otherwise. We obtain
\begin{align*}
 \Ical(X,\BB') &= \ideal\{  p_{(\alpha,-\beta,0)}^2, p_{(0,\beta,-\gamma)}^2, p_{(\alpha,0,-\gamma)}^2  \} + \R[s_1,s_2,s_3]_{\ge 3}
\text{ and}
\\
 \Pcal(X,\BB') &=  \ker\Ical(X,\BB') =\spa \{1, s_1, s_2, s_3,\, 
                    s_1(\alpha s_1 + \beta s_2 + \gamma s_3) ,\, 
                    \\
	      &\qquad\qquad\qquad\qquad\qquad\quad
       s_2(\alpha s_1 + \beta s_2 + \gamma s_3),
		                    s_3(\alpha s_1 + \beta s_2 + \gamma s_3) \}.
\end{align*}
 The degree two component of $\R[s_1,s_2,s_3]/\Ical(X,\BB')$ is three-dimensional.
 This implies that $\Pcal(X,\BB')=\ker
\Ical(X,\BB')$.
\end{Example}
\section{Comparison with previously known zonotopal spaces}
\label{Section:ZonotopalAlgebra}
 In this section we review the
 definitions of various zonotopal spaces that have been studied previously by other authors.
 It turns out that they are all special 
 cases 
 of the generalised $\Dcal$-spaces and $\Pcal$-spaces 
 that we introduced in Section~\ref{Section:GeneralZonotopalSpaces}.
 The most prominent examples are of course the central spaces $\Dcal(X)$ 
 and $\Pcal(X)$ that we obtain if we choose $\BB'=\BB(X)$.

\smallskip
 Let  $A = (a_1,\ldots, a_n) \subseteq \R^{r}$ be a  list of vectors that spans $\R^r$ and let
 $X=(x_1,\ldots, x_N) \subseteq  \R^{r}$, where $N\ge n$ and $a_i=x_i$ for $i\in [n]$.
 In \cite{holtz-ron-2011, holtz-ron-xu-2012, li-ron-2013}, 
 the spaces $\Dcal(X,\BB')$ and $\Pcal(X,\BB')$  where studied for 
 certain sets of bases $\BB' \subseteq \BB(X)$.
 Here are the definitions of these sets of bases. 
\begin{Definition}[Internal and external bases \cite{holtz-ron-2011}]
 \label{Definition:InternalExternalBases}
 Let $A\subseteq \R^r$ be a list of vectors that spans $\R^r$ and
 let  $B_0 =(b_1,\ldots, b_r) \subseteq \R^r$ be an arbitrary basis for $\R^r$ that is not necessarily contained in $\BB(A)$. Let
 $X=(A,B_0)$ and let
\begin{align}
 \ex : \{ I \subseteq A : I \text{ linearly independent} \} \to \BB(X) 
\end{align}
 be the function that maps an independent set in $A$ to its greedy extension.
 This means that 
 given an independent set $I\subseteq A$, the vectors $b_1,\ldots, b_r$ 
 are added successively to $I$
 unless the resulting set would be linearly dependent.
  
 Then we define the  set of \emph{external bases} $\BB_+(A,B_0)$ 
 and the set of \emph{internal bases} $\BB_-(A)$  by
\begin{align}
 \BB_+(A,B_0) &:= \{ B \in \BB(X) : B = \ex(I) \text{ for some } I\subseteq A \text{ independent} \} \\
  \text{ and }\; \BB_-(A) &:= \{ B\in \BB(A) : B \text{ contains no internally active elements} \}.
\end{align}
\end{Definition}

 The \emph{lattice of flats} $\Lcal(A)$ of the matroid $(A,\BB(A))$ 
  is the set $ \{ C \subseteq A : \clos (C)  = C  \}$ ordered by inclusion. 
 An \emph{upper set} $J\subseteq \Lcal(A)$ is an upward closed set, \ie $C_1 \subseteq C_2$ and $C_1 \in J$ implies $C_2\in J$.

\begin{Definition}[Semi-internal and semi-external bases \cite{holtz-ron-xu-2012}]
\label{Definition:SemiInternalExternalBases}
 We use the same terminology as in Definition~\ref{Definition:InternalExternalBases}.
 In addition, we fix an upper set $J$ in the 
lattice of flats $\Lcal(A)$ of the matroid $(A,\BB(A))$.
 For the semi-internal space, we fix an independent set $I_0\subseteq A$ 
 whose elements are maximal in $A$.

 Then we define the  set of \emph{semi-external bases} $\BB_+(A,B_0,J)$ 
 and the set of \emph{semi-internal bases} $\BB_-(A,I_0)$  by
\begin{align}
\begin{split}
 \BB_+(A,B_0,J) &:= \{ B \in \BB(X) : B=\ex(I) \text{ for some $I\subseteq A$ independent} \\
 & \qquad \qquad
\text{and } 
\clos(I) \in J \} \;\text{ and} 
\end{split} \\
 \BB_-(A,I_0) &:= \{ B\in \BB(A) : B\cap I_0 \text{ contains no internally active elements} \}.
 \nonumber
\end{align}
\end{Definition}

\begin{Definition}[Generalised external bases \cite{li-ron-2013}]
\label{Definition:LiRonBases}
Let $A=(a_1,\ldots, a_n)\subseteq \R^r$ be a list of vectors. % 
 Let $\kappa : \Lcal(A) \to \{0,1,2,\ldots\}$ be an non-decreasing function,
 \ie $C_1\subseteq C_2$ implies $\kappa(C_1)\le \kappa(C_2)$.

 Let $X= (A, Y)$, where $Y= (y_1, y_2,\ldots, y_{\kappa(A) + r})$ is a list of  generic vectors, \ie
 if $y_i$ is in the span of $Z\subseteq X\setminus y_i$, then $\spa(Z) = \spa (X)$.

Then we define
\begin{align}
  \BB_\kappa(A,Y) := 
     \{ B \in \BB(X) : B \cap Y \subseteq (y_1,\ldots, y_{\kappa(\clos(A\cap B)) + \abs{B \cap Y}}) \}. 
\end{align}
\end{Definition}
\begin{Remark}
 The spaces $\Pcal(X,\BB')$ and $\Dcal(X,\BB')$ are equal to
 \begin{itemize}
  \item  the external spaces $\Pcal_+(X)$ and $\Dcal_+(X)$ in \cite{holtz-ron-2011} if $\BB'$ is the set of external bases;
  \item  the semi-external spaces $\Pcal_+(X,J)$ and $\Dcal_+(X,J)$ in \cite{holtz-ron-xu-2012} 
	  if $\BB'$ is the set of semi-external bases;
  \item the generalised external spaces $\Pcal_\kappa(X)$ and $\Dcal_\kappa(X)$ in \cite{li-ron-2013}
     if   $\BB'$ is the set of generalised external bases.
    For $\Pcal_\kappa(X)$, we need to assume in addition that $\kappa$ is incremental, \ie
        for two flats $C_1\subseteq C_2$, $\kappa(C_2)-\kappa(C_1) \le \dim(C_2) - \dim(C_1)$.
 \end{itemize}
 Furthermore, the space $\Dcal(X,\BB')$ is equal to the
 (semi-)internal space $\Dcal_-(X)$ resp.\ $\Dcal_-(X,I_0)$ in \cite{holtz-ron-2011,holtz-ron-xu-2012} 
 if $\BB'$ is the set of (semi-)internal bases.
\end{Remark}
\begin{Remark}
 The (semi-)internal spaces  $\Pcal(X,\BB_-(X))$ and $\Pcal(X,\BB_-(X,I_0))$ are in general different from
  the spaces $\Pcal_-(X)$ and $\Pcal_-(X,I_0)$  
 in \cite{holtz-ron-2011, holtz-ron-xu-2012} (cf.~\cite[Proposition 2]{ardila-postnikov-errata-2015}), but they have the
   same Hilbert series.  
\end{Remark}
\begin{Remark}
  The theorems about duality of certain $\Pcal$-spaces and $\Dcal$-spaces in
  \cite{holtz-ron-2011, holtz-ron-xu-2012, li-ron-2013} are all
 special cases of
 Theorem~\ref{Theorem:MainTheoremGeneral}.
This is a consequence of
Lemma~\ref{Prop:ZonotopalBasesSFP} below.
\end{Remark}
\begin{Lemma}
 \label{Prop:ZonotopalBasesSFP}
 The sets of bases defined in Definitions~\ref{Definition:InternalExternalBases},
 \ref{Definition:SemiInternalExternalBases}, and
 \ref{Definition:LiRonBases} all have the forward exchange property.
\end{Lemma}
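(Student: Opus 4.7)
The plan is to verify the forward exchange property for each of the five families by analysing a non-trivial exchange $B\to B'=(B\setminus b_i)\cup x$ with $x\neq b_i$. The key preliminary observation is that such an $x$ automatically satisfies $x\in A\setminus B$ and $x<b_i$ in the $X$-order. Indeed, writing $x=\sum_{k\le i} c_k b_k$ with $c_i\neq 0$, the fundamental circuit of $x$ with respect to $B$ is $\{x\}\cup\{b_j:c_j\neq 0\}$; since $B$ is ordered by restriction of the $X$-order and contains $b_i$, the maximum of the $B$-part of this circuit is exactly $b_i$. Hence $x$ is externally active iff $x>b_i$, and the hypothesis $x\notin E(B)$ forces $x<b_i$.

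For the external bases $\BB_+(A,B_0)$, a straightforward induction on the greedy construction of $\ex(\cdot)$ yields the characterisation $B\in\BB_+(A,B_0)\iff B_0\setminus B\subseteq E(B)$. Combined with the preliminary observation, this forces $x\in A$ (otherwise $x\in B_0\setminus B\subseteq E(B)$). The verification that $B'\in\BB_+(A,B_0)$ reduces to checking that every $b\in B_0\setminus B'$ is externally active in $B'$, which follows from the span identity $\spa(b_1,\ldots,b_{i-1},x)=\spa(b_1,\ldots,b_i)$; I split on whether $b_i\in A$ (in which case $B_0\setminus B'=B_0\setminus B$) or $b_i\in B_0$ (in which case $B_0\setminus B'=(B_0\setminus B)\cup\{b_i\}$ and the extra element $b_i$ is checked directly using the same span identity).

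For the internal bases $\BB_-(A)$, the condition translates to: every $b\in B$ admits a witness $a\in A$ with $a>b$ and $a\notin\clos(B\setminus b)$. I supply such witnesses for each $b\in B'$: for $b=b_j$ with $j>i$ the identity $\clos(B'\setminus b_j)=\clos(B\setminus b_j)$ transfers the $B$-witness; for $b=b_j$ with $j<i$ and $c_j=0$ the same identity applies; for $j<i$ with $c_j\neq 0$, $(B'\setminus b_j)\cup\{b_i\}=(B\setminus b_j)\cup\{x\}$ is a basis, so $b_i\notin\clos(B'\setminus b_j)$, and since $b_i>b_j$, $b_i$ itself serves as the witness; for $b=x$, any witness $a>b_i$ for $b_i$ in $B$ satisfies $a>b_i>x$ and $a\notin\clos(B\setminus b_i)=\clos(B'\setminus x)$. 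The semi-internal case $\BB_-(A,I_0)$ follows by restricting these arguments to $B'\cap I_0$: since $I_0$ consists of maximal elements of $A$ and $x<b_i$, one has $x\in I_0\Rightarrow b_i\in I_0$, which makes $B'\cap I_0$ either a subset of $B\cap I_0$ or obtained from $B\cap I_0$ by an analogous exchange. The semi-external case $\BB_+(A,B_0,J)$ adds to the external argument the preservation of $\clos(B\cap A)\in J$: when $b_i\in A$, $\spa(B'\cap A)=\spa(B\cap A)$; when $b_i\in B_0$, $\clos(B'\cap A)\supsetneq\clos(B\cap A)$, so upper-closedness of $J$ applies.

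For the generalised external bases $\BB_\kappa(A,Y)$, the genericity of $Y$ restricts the form of a non-trivial exchange: if $b_i\in Y$ and $i<r$, then $S_i^B$ is a flat of rank $i<r$, and genericity rules out any $y_j\neq b_i$ or any $a\in A\setminus\{b_1,\ldots,b_{\abs{B\cap A}}\}$ from lying in $S_i^B\setminus S_{i-1}^B$. Hence only two kinds of non-trivial exchanges occur: (a) $b_i\in A$ with $x\in A$, and (b) $b_i=b_r\in Y$ with $x\in A$. In case (a), $\spa(B'\cap A)=\spa(B\cap A)$ and $\abs{B'\cap Y}=\abs{B\cap Y}$, so the bound $M=\kappa(\clos(B\cap A))+\abs{B\cap Y}$ is unchanged. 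In case (b), $\abs{B'\cap Y}=\abs{B\cap Y}-1$ and $\clos(B'\cap A)\supsetneq\clos(B\cap A)$, which together with $\kappa$ increasing yields the new bound $M'\ge M-1$; since $b_r$ was the maximum-index element of $B\cap Y$, the maximum index surviving in $B'\cap Y$ is at most $M-1\le M'$, and the defining inclusion is preserved. The main obstacle in the overall proof is the $j<i$, $c_j\neq 0$ case of the internal analysis, where the matroidal flag genuinely changes under the exchange; the key insight is that the element $b_i$ being removed always furnishes a valid witness.
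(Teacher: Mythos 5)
Your overall strategy — isolating the preliminary observation that $x<b_i$ and then checking each family by chasing the definitions through the exchange $B\mapsto B'=(B\setminus b_i)\cup x$ — is the same as the paper's, and most of your case analysis is sound and in fact more detailed than the published argument. In particular, your treatment of the internal bases is a genuine elaboration: the paper only remarks that the fundamental cocircuits of $b_i$ in $B$ and of $x$ in $B'$ coincide, whereas you carefully track what happens to the other fundamental cocircuits and notice that the one case where the cocircuit genuinely changes (namely $j<i$ with $c_j\neq 0$) is harmless because the ejected element $b_i$ itself enters that cocircuit and dominates $b_j$. Your characterisation $B\in\BB_+(A,B_0)\iff B_0\setminus B\subseteq E(B)$ and the split on $b_i\in A$ vs.\ $b_i\in B_0$ likewise matches the paper's intent and is written out more explicitly.

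There is, however, a gap in the generalised external case. You assert that ``only two kinds of non-trivial exchanges occur: (a) $b_i\in A$ with $x\in A$, and (b) $b_i=b_r\in Y$ with $x\in A$.'' The genericity argument you give correctly forces $i=r$ whenever $b_i\in Y$, but it does \emph{not} force $x\in A$ once $i=r$: a perfectly valid exchange has $b_r=y_k\in Y$ and $x=y_m\in Y$ with $m<k$ and $y_m\notin B$. (Genericity only constrains what can lie inside a \emph{proper} flat $S_i$, $i<r$; it says nothing when $S_r=U$.) Your analysis of case (b) explicitly uses $\abs{B'\cap Y}=\abs{B\cap Y}-1$ and $\clos(B'\cap A)\supsetneq\clos(B\cap A)$, both of which are false when $x\in Y$. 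So that third sub-case is genuinely missing. It is easy to repair — when $b_r,x\in Y$ the bound $M=\kappa(\clos(A\cap B))+\abs{B\cap Y}$ is unchanged, $b_r=y_k$ with $k\le M$, and all indices in $B'\cap Y$ are $<k\le M$ — but as written your enumeration of cases is incorrect and the proof does not cover it. The paper's wording (``reduces the index of the maximal element permitted in $B'$ by at most one'') is phrased loosely enough to absorb both the $x\in A$ and $x\in Y$ sub-cases; your version, by committing to a strict decrease of $\abs{B\cap Y}$ and a strict increase of the flat, does not.
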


\begin{proof}
  We use the following notation throughout the proof:  $B=(b_1,\ldots, b_r)$ is a basis and 
  $x \in (S_i^B \setminus S_{i-1}^B) \cap ( X\setminus E(B))$ for some $i\in [r]$.
 In addition, $B':=(B\cup x) \setminus b_i$.
 Since $x$ is not externally active, $x \le b_i$ holds. We may even assume $x<b_i$ because if equality occurs, nothing needs to be shown.

Internal and external bases are special cases of semi-internal and semi-external 
 bases so we do not consider them separately.

Let us start with the semi-external bases. Let $B\in \BB_+(A,B_0,J)$, \ie 
 B is the greedy extension of an independent set $I\subseteq A$.
 Recall that $x<b_i$.
 Hence, $x\in A$ because if $x$ was in $B_0$, the greedy extension of $I$ would contain $x$ instead of $b_i$.
  Now one can easily check that $B'\cap A$ is independent and that $\ex(B'\cap A)=B'$. This is equivalent 
 to $B'\in \BB_+(A,B_0,J)$.
  
 Now we  consider the semi-internal bases. % 
 Let $B\in \BB_-(A,I_0)$. % 
 By construction,
 the fundamental cocircuits of $B$ and $b_i$ resp.\ $B'$ and $x$ are equal.
 As $x< b_i$, inactivity of $b_i$ implies inactivity of $x$.
 Hence, $B'\in \BB_-(A,I_0)$.

 Last, let us consider the generalised external bases. Let $B\in \BB_{\kappa}(A,Y)$.
 If $b_i\in A$, then $\kappa ( \clos(A\cap B)) + \abs{B\cap Y}  = \kappa( \clos( A\cap B') ) + \abs{B'\cap Y}$. This 
 implies $B'\in \BB_{\kappa}(A,Y)$.
 If $b_i\in Y$,
 then $i=r$ must hold because the vectors in $Y$ are generic and we are supposing $b_i\neq x$.
 Replacing $b_r$ by $x$ reduces
 the index of the maximal element in $Y$ that is permitted in $B'$ by at most one since $\kappa $
 is non-decreasing on $\Lcal(A)$. Since we remove the maximal element of the basis $B$, this causes no problems.
\end{proof}
\begin{Remark}
 Various $\Pcal$-spaces
 without a dual $\Dcal$-space
 have been studied by several other authors \eg
 \cite{ardila-postnikov-2009,berget-2010, lenz-hzpi-2012,orlik-terao-1994, wagner-1999}.
 The approach in \cite{ardila-postnikov-2009,lenz-hzpi-2012} is slightly different from ours. 
 The authors of these two papers 
  start with a list of vectors $X$ and an integer $k$.
 In our construction, 
   it is eventually necessary to add additional elements to the list $X$ 
  in order to obtain arbitrarily large $\Pcal$-spaces. 
 In their construction, it is sufficient to let the integer parameter $k$ grow while keeping the list $X$ fixed. 
 
 Their construction of a basis for the $\Pcal$-space takes into account the internal activity of the 
 bases in $\BB(X)$. Every element of $\BB(X)$ with internally active elements may define 
 multiple elements of the basis for the $\Pcal$-space.
\end{Remark}
\begin{Example}
 Example~\ref{Example:PnotInverseSystem} fits into the framework of \cite{li-ron-2013} resp.\  
 Definition~\ref{Definition:LiRonBases}
 if we choose $A=(x_1)$, $Y=(x_2,\ldots, x_N)$ and $\kappa$ as follows: 
  $\kappa(\spa(e_1)):=N-2$ and on all other one-dimensional flats $C$, $\kappa(C):=0$.
 In this case, our space $\Dcal(X,\BB')$ is the same as the space $\Dcal_\kappa$ in \cite{li-ron-2013}. 
 The $\Pcal$-spaces are different because $\kappa$ is not incremental.
\end{Example}

 \newcommand{\MR}[1]{} % no MR numbers

%  \section*{References}
 
\bibliographystyle{amsplain}
\providecommand{\bysame}{\leavevmode\hbox to3em{\hrulefill}\thinspace}
\providecommand{\MR}{\relax\ifhmode\unskip\space\fi MR }
% \MRhref is called by the amsart/book/proc definition of \MR.
\providecommand{\MRhref}[2]{%
  \href{http://www.ams.org/mathscinet-getitem?mr=#1}{#2}
}
\providecommand{\href}[2]{#2}

\end{document}